\begin{document}
\title[Reflecting Ornstein-Uhlenbeck stochastic process]
{Infinite dimensional reflecting Ornstein-Uhlenbeck stochastic process}

\author[K. Akhlil]
{Khalid Akhlil}

\address{}




\date{\today}
\thanks{The author is supported by ``Deutscher Akademischer Austausch Dienst``(German Academic Exchange Service).}
\subjclass[2000]{subjclass}
\keywords{Keywords}

\begin{abstract}
In this article we introduce the Gaussian Sobolev space
$W^{1,2}(\mathscr O,\gamma)$, where $\mathscr O$ is an arbitrary
open set of a separable Banach space $E$ endowed with a
nondegenerate centered Gaussian measure $\gamma$. Moreover, we
investigate the semimartingale structure of the infinite dimensional
reflecting Ornstein-Uhlenbeck process  for open sets of the form
$\mathscr O=\{x\in E\, :\,  G(x)<0\}$, where $ G$ is some Borel
function on $E$.
\end{abstract}

\maketitle
\setlength{\textheight}{19.5 cm} \setlength{\textwidth}{12.5 cm}
\newtheorem{theorem}{Theorem}[section]
\newtheorem{lemma}[theorem]{Lemma}
\newtheorem{proposition}[theorem]{Proposition}
\newtheorem{corollary}[theorem]{Corollary}
\newtheorem{Hypo}[theorem]{Assumption}
\theoremstyle{definition}
\newtheorem{definition}[theorem]{Definition}
\newtheorem{example}[theorem]{Example}
\theoremstyle{remark}
\newtheorem{remark}[theorem]{Remark}
\numberwithin{equation}{section} \setcounter{page}{1}

\newcommand{\mo}{\mathscr O}
\newcommand{\obar}{\overline{\mo}}
\newcommand{\Dho}{D_H^{\mo}}
\newcommand{\me}{\mathscr E}
\newcommand{\EO}{\mathscr E_{\mo}}
\newcommand{\Cap}{\mathfrak{Cap}_{\obar}}
\newcommand{\Ecap}{\mathfrak{Cap}}

\section{Introduction}
Let $E$ be a separable Banach space endowed with a nondegenerate
centered Gaussian measure $\gamma$ and $H(\gamma)$ be its relevant
Cameron-Martin space, which is known to be continuously and densely
embedded in $E$. In a remarkable paper \cite{CL}, Sobolev spaces of
real valued functions defined on open sets was introduced for open
sets $\mo$ of the form $\mo=\{x\in E\,|\,G(x)<0\}$ for suitable
$G:E\rightarrow\mathbb R$. More precisely, the Sobolev spaces
$W^{1,p}(\mo,\gamma)$ are defined as the closure, in the sobolev
norm, of the operator $\Dho:\mathrm{Lip}(\mo) \longrightarrow
L^p(\mo,\gamma;H)$ defined by
 \[
 \Dho\varphi:=D_H\breve{\varphi}_{|\mo}
 \]
 where $D_H$ is the derivative in the direction of $H$ and $\breve\varphi$ is any extension of $\varphi$ to an
 element of $\mathrm{Lip}(E)$ ( $\mathrm{Lip}(\mo)$ (resp. $\mathrm{Lip}(E)$) is the space of Lipschitz continuous
 functions on $\mo$ (resp. $E$)).

 After defining Sobolev spaces $W^{1,p}(\mo,\gamma)$, the authors in \cite{CL} defined
 the trace operator $\mathrm{Tr}$ of functions in $W^{1,p}(\mo,\gamma)$ at $\partial\mo$ and proved the following
 integration by parts formula, under some ''natural`` assumptions on $G$
  \begin{equation}
 \int_{\mo}D_k^{\mo}\varphi d\gamma=\int_{\mo}\hat{v}_k\varphi d\gamma+
 \int_{\partial\mo}\frac{D_H^k G}{|D_H G|_H}\mathrm{Tr}\varphi d\rho
 \end{equation}
for every $\varphi\in W^{1,p}(\mo,\gamma)\, (p>1)$, where $\{v_k\,|\,k\in\mathbb N\}$ is an orthonormal basis of
$H(\gamma)$ and $\hat v_k$ is the element generated by $v_k$ (see subsection 2.2).

Now let $\mo$ be an arbitrary open set of $E$. In particular,
$\obar$, with the topology induced by the one of the separable
Banach space $E$, is a Luzin topological space and functions in
$L^p(\mo,\mathscr B(\mo),\gamma_{|\mo})$ have to be seen as
functions in $L^p(\obar,\mathscr B(\obar), m)$ where $m$ is defined,
for $A\in\mathscr B(\obar)$, by $m(A)=\gamma(A\cap\mo)$. In a paper
in preparation \cite{KS}, Sobolev spaces $W^{1,2}(\mo,\gamma)$ are
defined with the same procedure but for arbitrary open sets $\mo$ of
$E$ by using another method to prove the closability of $\Dho$ (see
Lemma \ref{lem:closable}). Moreover, a relative Gaussian capacity of
sets in $\obar$ is introduced. It is the capacity associated with
the Dirichlet form $(\EO, D(\EO))$ on $L^2(\obar,m)$ with domain
$D(\mathscr E)=W^{1,2}(\mo,\gamma)$ defined by
 \begin{equation}\label{eq:form}
\EO (\varphi,\psi) = \int_\mo [D_H^\mo \varphi, D_H^\mo \psi]_H\, d\gamma
\end{equation}

The Gaussian relative capacity is a Choquet capacity and is tight,
which means that the Dirichlet form $(\EO, D(\EO))$ is
quasi-regular. Moreover, it is local and hence its associated right
process $\mathbf M=(\Omega, \mathscr F,(X_t)_{t\geq 0},(P_z)_{z\in
E})$ is, in fact, a diffusion process.

The purpose of this paper is to prove, for open sets of the form
$\mo=\{x\in E\,|\,G(x)<0\}$ for suitable $G:E\rightarrow\mathbb R$,
that the diffusion process $(X_t)_{t\geq 0}$ associated with $(\EO,
D(\EO))$ is a semimartingale with a Skorohod type decomposition. As
in the finite dimensional framework, we will use the well-known
Fukushima decomposition, which holds in the situation of
quasi-regular Dirichlet forms by using the transfer method. For a
relatively quasi-continuous
 $\gamma-$version $\tilde{\varphi}$ of $\varphi\in W^{1,2}(\mo,\gamma)$, the additive functional
 $(\tilde{\varphi}(X_t)-\tilde{\varphi}(X_0))_{t\geq 0}$ of $\mathbf M$ can uniquely be represented as

  \[
  \tilde{\varphi}(X_t)-\tilde{\varphi}(X_0)=M_t^{[\varphi]}+N_t^{[\varphi]}, t\geq 0
  \]
 where $M^{[\varphi]}:=(M_t^{[\varphi]})_{t\geq 0}$ is a MAF of $\mathbf M$ of finite energy and
 $N^{[\varphi]}:=(N_t^{[\varphi]})_{t\geq 0}$ is a CAF of $\mathbf M$ of zero energy.

To evaluate the bracket $\langle M^{[\varphi]}\rangle$ of the
martingale additive functional $M^{[\varphi]}$ for $\varphi\in
W^{1,2}(\mo,\gamma)$, we use a standard technic as for the finite
dimensional case \cite{BHs} and used in the infinite dimensional
framework in the case $\mo=E$ for a more general $E$( see for
example \cite[Proposition 4.5]{AR}). Let $\varphi\in
W^{1,2}(\mo,\gamma)$, then one obtains
\begin{equation}
\langle M^{[\varphi]}\rangle_t=\int_0^t[\Dho\varphi(X_s),\Dho\varphi(X_s)]_Hds, t\geq 0
\end{equation}
To evaluate $N^{[\varphi]}$ we shall characterize, as in the regular
Dirichlet forms framework, the boundedness of its variation which is
an easy task by using the transfer method (see Lemma \ref{lem:bv}).

To simplify our calculus, we consider two identifications: The
standard one consisting of identifying $H(\gamma)$ with its dual
$H(\gamma)'$ and the second consisting of identifying $E'\times
H(\gamma)$ with $H(\gamma)\times H(\gamma)$, which means that one
consider the dualisation $~_{E'}\langle,\rangle_E$ to coincide with
$[,]_H$ when restricted to $E'\times H(\gamma)$. In this situation
one obtain a countable subset $K_0=\{l_k,\,k\in\mathbb N\}$ of $E'$
forming an orhonormal basis of $H(\gamma)$ and separating the points
of $E$ such that the linear span $K\subset E'$ of $K_0$ is dense in
$H(\gamma)$.

Our first result consists of componentwise semimartingal structure
of $\mathbf M$. We define the following coordinate functions: For
$l\in K$, with $|l|_H=1$, define
\[
\varphi_l(z)=~_{E'}<l,z>_E,\quad z\in E
\]
For this functions, Fukushima decomposition becomes as follow:
\begin{equation}
\varphi_l(X_t)-\varphi_l(X_0)=W^l_t+\int_0^t\hat l(X_s)ds+\int_0^t\nu^l_{ G}(X_s)dL^{\rho}_s
\end{equation}
where for all $z\in\obar\setminus S_l$ for some relative polar set
$S_l\subset\obar$ the continues martingale $(W_t^l,\mathscr F_t,
P_z)_{t\geq 0}$ is a one dimensional Brownian motion starting at
zero and $\hat l$ is the element generated by $l$. The vector
$\nu^l_G$ is defined by
\[\nu^l_G=\frac{D_H^lG}{|D_H G|_H}\]
plays the role of the outward normal vector field in the direction
of $l$ and $L^{\rho}_t$ is the positive continuous additive
functional associated with the Gaussian-Hausdorff meausre $\rho$ by
Revuz correspondence.

After surrounding some technical problems we will be able to prove our second main result. It says that there exists
always a map $W:\Omega\rightarrow C([0,\infty[,E)$ such that for r.q.e. $z\in\obar$ under $P_z$, $W=(W_t)_{t\geq 0}$
is an $(\mathscr F_t)_{t\geq 0}-$Brownian motion on $E$ starting at zero with covariance $[,]_H$ such that for r.q.e.
$z\in\obar$
 \begin{equation}\label{eq:skorohod2}
  X_t=z+W_t+\int_0^tX_s\,ds+\int_0^t\nu_G(X_s)\,dL^{\rho}_s
 \end{equation}
where $L^{\rho}_t:=(L^{\rho}_t)_{t\geq 0}$ is as before and $\nu_G$ is a unite vector defined by
\[
 \nu_G:=\frac{D_H G}{|D_H G|_H}.
\]

Such results of semimartingale structure of the reflecting
Ornstein-Uhlenbeck stochastic process were already considered but
for the space of BV functions and for a very smooth sets, namely
convex sets( see \cite{BPT}, \cite{BPT2}, \cite{RZZ}, \cite{FH} and
references therein). The paper \cite{CL} opens a new perspectives on
dealing with open sets in infinite dimensions framework, in
particular for the infinite dimensional reflecting
Ornstein-Uhlembeck stochastic process as developed in the current
paper.

\section{Preliminaries}

In this section we recall some facts about the theory of
quasi-regular Dirichlet forms and the associated right processes. It
is the adequate framework when one want to deal with Sobolev spaces
in infinite dimensions, but one cannot either use directly the
general theory of Dirichlet forms as described in \cite{F80}.
However it is possible to transfer our framework in the situation of
\cite{F80} by using a compactification method (see \cite{MR} for
more details). A second element to introduce is the theory of
Gaussian measures as summarized in \cite{Bo}.

\subsection{Quasi-regular Dirichlet forms}

Let $\mathcal H$ be a real Hilbert space with inner product $(,)_H$ and norm $\|.\|_H$. Let $\mathcal D$ be a
linear subspace of $\mathcal H$ and $\mathscr E:\mathcal D\times\mathcal D\rightarrow\mathbb R$ a bilinear map.
Assume that $(\mathscr E,\mathcal D)$ is positive definite (i.e. $\mathscr E(u):=\mathscr E(u,u)\geq 0$ for all
$u\in\mathcal D$). Then $(\mathscr E,\mathcal D)$ is said to satisfy \textit{the weak sector condition} if, there
exists a constant $K>0$, called continuity constant, such that \[|\mathscr E_1(u,v)|\leq K\mathscr E_1(u,u)^{1/2}
\mathscr E_1(v,v)^{1/2}\] for all $u,v\in\mathcal D$. A pair $(\mathscr E, D(\mathscr E))$ is called a
\textit{coercive closed form} on $\mathcal H$ if $D(\mathscr E)$ is a dense linear subspace of $\mathcal H$ and
the bilinear map $\mathscr E:D(\mathscr E)\times D(\mathscr E)\rightarrow\mathbb R$ is a symmetric form and
satisfies the weak sector condition. In this situation the associated operator with $(\mathscr E, D(\mathscr E))$ is
defined as follow

\begin{equation}
  \begin{array}{ll}
           D(A):=\{u\in D(\mathscr E)\,|\, \exists\varphi\in\mathcal H\text{ s.t. }\mathscr E(u,v)=(\varphi,v)
           \forall v\in D(\mathscr E)\} \\[0.2cm]
             Au:=\varphi.
          \end{array}
\end{equation}



Recall that a positive definite bilinear form $(\mathscr
E,D(\mathscr E))$ on $\mathcal H$ is said closable on $\mathcal H$
if for all $u_n,\, n\in\mathbb N$, such that $\mathscr
E(u_n-u_m)\to_{n,m\to\infty} 0$ and $u_n\to 0$ in $\mathcal H$, it
follows that $\mathscr E(u_n)\to 0$.

Now we replace $\mathcal H$ by the concrete Hilbert space
$L^2(E;m):=L^2(E;\mathscr B;m)$ with the usual $L^2-$inner product
where $(E;\mathscr B;m)$ is a measure space. As usual we set for
$u,v: E\rightarrow\mathbb R$, $u\vee v:= \mathrm{sup}(u,v),\,
u\wedge v:=\mathrm{inf}(u,v),\, u^+:=u\vee 0,\, u^-:=-u\wedge 0$,
and we write $f\geq g$ or $f<g$ for $f,g\in L^2(E;m)$ if the
inequality holds $m-$a.e. for corresponding representatives.

A symmetric coercive closed form $(\mathscr E,D(\mathscr E))$ on $L^2(E;m)$ is called a symmetric Dirichlet form
if for all $u\in D(\mathscr E)$, one has that $u^+\wedge 1\in D(\mathscr E)$ and $\mathscr E(u^+\wedge 1)\leq
\mathscr E(u)$.

\begin{definition}
 \begin{enumerate}
  \item[(i)] An increasing sequence $(F_k)_{k\in\mathbb N}$ of closed subsets of $E$ is called $\mathscr E-$nest if
  $\bigcup_{k\geq 0} D(\mathscr E)_{F_k}$ is dense in $D(\mathscr E)$ with respect to $\mathscr E_1^{1/2}$, where
  \[D(\mathscr E)_F:=\{u\in \ D(\mathscr E):u=0\text { in }E\setminus F\}.\]
  \item[(ii)] A set $ N$ is called $\mathscr E-$exceptional if $N\subset\cap_{k\in\mathbb N}F_k^c$ for some
  $\mathscr E-$nest $(F_k)_{k\in\mathbb N}$.
  \item[(iii)] We say that a property of points in $E$ holds $\mathscr E-$quasi-everywhere ( $\mathscr E-$q.e.), if
  the property holds outside some $\mathscr E-$exceptional set.
 \end{enumerate}

\end{definition}

\begin{definition}
 A Dirichlet form $(\mathscr E, D(\mathscr E))$ on $L^2(E;m)$ is called \textit{quasi-regular Dirichlet form} if
 \begin{enumerate}
  \item[(i)] There exists an $\mathscr E-$ nest $(E_k)_{k\in\mathbb N}$ consisting of compact sets.
  \item[(ii)] There exists an $\mathscr E_1-$dense subset of $D(\mathscr E)$ whose elements have
  $\mathscr E-$quasi-continues $m-$versions.
  \item[(iii)] There exists $u_n\in D(\mathscr E),\,n\in\mathbb N$, having $\mathscr E-$quasi-continues $m-$versions
  $\tilde u_n,\, n\in\mathbb N$, and an $\mathscr E$-exceptional set $N\subset E$ such that
  $\{\tilde u_n|n\in\mathbb N\}$ separates the pints of $E\setminus N$.
 \end{enumerate}
\end{definition}
~\\

Now fix a measurable space $(\Omega,\mathscr F)$ and  a filtration $(\mathscr F_t)_{t\in[0,\infty]}$ on $(\Omega,
\mathscr F)$. Let $E$ be a Hausdorff topological space and $\mathscr B(E)$ denotes its Borel $\sigma-$algebra. We
adjoint to $E$ an extra point $\Delta$(cemetery) as an isolated point to obtain a Hausdorff topological space
$E_\Delta=E\cup\{\Delta\}$ with Borel algebra $\mathscr B(E_\Delta):=\mathscr B(E)\cup\{B\cup\{\Delta\}
|B\in\mathscr B(E)\}$. Any function $f:E\rightarrow\mathbb R$ is extended as a function on $E_{\Delta}$ by putting
$f(\Delta)=0$. Given a positive measure $\mu$ on $(E_\Delta,\mathscr B(E_\Delta))$ we define a positive measure
$P_\mu$ on $(\Omega,\mathscr F)$ by
\[
 P_\mu(A):=\int_{E_\Delta}P_z(A)\mu(dz),\, A\in\mathscr F
\]

\begin{definition}
 Let $M=(\Omega,\mathscr F, (X_t)_{t\geq 0},(P_z)_{z\in E_\Delta})$ be a Markov process with state space $E$,
 life time $\xi$ and the corresponding filtration $(\mathscr F_t)$. $M$ is called \textit{right process}
 (w.r.t. $(\mathscr F_t)$) if it has the following additional properties
 \begin{enumerate}
  \item[(A)] (Normal property) $P_z(X_0=z)=1$ for all $z\in E_\Delta$.
  \item[(B)] (Right continuity) For each $\omega\in\Omega$, $t\mapsto X_t(\omega)$ is right continuous on $[0,\infty[$.
  \item[(C)] (Strong Markov property) $(\mathscr F_t)$ is right continuous and every $(\mathscr F_t)-$stopping time
  $\sigma$ and every $\mu\in\mathcal P(E_\Delta)$
  \[
   P_\mu(X_{\sigma+t}\in A|\mathscr F_\sigma)=P_{X_\Delta}(X_t\in A),\, P_\mu-\text{a.s.}
  \]
for all $A\in\mathscr B(E_\Delta),\, t\geq 0$.
 \end{enumerate}
\end{definition}

Now we fix $M$ a right process with state space $E$ and life time $\xi$. $(X_t)_{t\geq 0}$ is measurable then
\[
 p_tf(z):=p_t(z,\varphi):=E_z[\varphi(X_t)],\,z\in E,\, t\geq 0,\, \varphi\in\mathscr B(E)^+
\]
define a submarkovian semigroup of kernels on $(E,\mathscr B(E))$.

Let $(\mathscr E,D(\mathscr E))$ be a Dirichlet form on $L^2(E;m)$
and $(T_t)_{t\geq 0}$ the associated sub-markovian strongly
continuous semigroup on $L^2(E;m)$. A right process $M$ with state
space $E$ and transition semigroup $(p_t)_{t\geq 0}$ is called
associated with $(\mathscr E,D(\mathscr E))$ if $p_tf$ is an
$m-$version of $T_tf$ for all $t>0$. If ,in addition, $p_tf$ is
$\mathscr E-$quasi-continuous for all $t>0$ and $f\in\mathscr
B_b(E)\cap L^2(E;m)$, $M$ is called properly associated with
$(\mathscr E,D(\mathscr E))$.

\begin{theorem}
 Let $E$ be a metrizable Lusin space. Then a Dirichlet form $(\mathscr E,D(\mathscr E))$ on $L^2(E;m)$ is
 quasi-regular if and only if there exists a right process $M$ associated with $(\mathscr E,D(\mathscr E))$. In this
 case $M$ is always properly associated with $(\mathscr E,D(\mathscr E))$.
\end{theorem}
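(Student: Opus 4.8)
The statement is the fundamental characterization theorem of \cite{MR} on which the transfer method described in Section~2 rests, so the plan is to prove the two implications separately and then deal with proper association by that same method.

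For the easy direction ($\Leftarrow$), I would start from a right process $M=(\Omega,\mathscr F,(X_t)_{t\geq 0},(P_z)_{z\in E_\Delta})$ associated with $(\mathscr E,D(\mathscr E))$, with resolvent $(R_\alpha)_{\alpha>0}$, and verify the three defining conditions of quasi-regularity. First I would show the capacity of $(\mathscr E,D(\mathscr E))$ is tight: choosing $0<u=R_1 g\in L^1(E;m)\cap L^2(E;m)$ for a strictly positive $g$, the finite measure $u\cdot m$ is inner regular by compact sets since $E$ is a metrizable Lusin space, and a $1$-excessive regularization upgrades these compacts to an increasing compact $\mathscr E$-nest $(E_k)$, which is condition (i). For (ii) I would use that $R_1(\mathscr B_b(E)\cap L^2(E;m))$ is $\mathscr E_1$-dense in $D(\mathscr E)$ and that each $R_1 f$, being $1$-excessive and hence finely continuous along the right-continuous paths of $M$, admits an $\mathscr E$-quasi-continuous $m$-version. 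For (iii) I would take a countable point-separating family $\{g_n\}\subset C_b(E)$ (available because $E$ is metrizable and Lusin) and observe that $\tilde u_n:=R_1 g_n\in D(\mathscr E)$ have quasi-continuous versions that still separate points outside an $\mathscr E$-exceptional set.

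The hard direction ($\Rightarrow$) is the transfer method. Given quasi-regularity, I would extract from (ii) and (iii) a countable family $\mathcal A_0$ of bounded $\mathscr E$-quasi-continuous functions separating the points of $E\setminus N$ for some $\mathscr E$-exceptional $N$, with $\mathscr E_1$-dense generated algebra $\mathcal A$; using $\mathcal A$ I would construct a compact metrizable space $Y$ together with a quasi-homeomorphism $j\colon E\to Y$ (Borel, injective on $E\setminus N$, with $j(E)\in\mathscr B(Y)$, carrying $\mathcal A$ onto a point-separating subalgebra of $C(Y)$). Pushing $m$ and $\mathscr E$ forward along $j$ yields a symmetric Dirichlet form $(\hat{\mathscr E},D(\hat{\mathscr E}))$ on $L^2(Y;j_\ast m)$ which is now \emph{regular}, so the classical existence theorem for regular Dirichlet forms on locally compact separable metric spaces \cite{F80} applies and produces a Hunt process $\hat M$ on $Y$ properly associated with it. The crucial step is to show that the adjoined set $Y\setminus j(E)$ is $\hat{\mathscr E}$-exceptional: I would transport the compact $\mathscr E$-nest $(E_k)$ to a compact $\hat{\mathscr E}$-nest $(j(E_k))$ contained in $j(E)$ and note $Y\setminus j(E)\subset\bigcap_k (Y\setminus j(E_k))$. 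Consequently $\hat M$ almost surely never leaves $j(E)$ off an exceptional set, so restricting $\hat M$ to $j(E)$ and pulling back by $j^{-1}$ gives a right process $M$ on $E$, and association is invariant under the quasi-homeomorphism $j$.

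For proper association, once $M$ is a right process associated with $(\mathscr E,D(\mathscr E))$ I would argue that for $f\in\mathscr B_b(E)\cap L^2(E;m)$ and $t>0$ the function $p_t f$ is bounded and $1$-excessive for $M$, whence $p_t f=\lim_{\alpha\to\infty}\alpha R_{\alpha+1}(p_t f)$, each $\alpha R_{\alpha+1}(p_t f)$ being $\mathscr E$-quasi-continuous and the convergence being $\mathscr E_1$-convergence along a subsequence, hence quasi-uniform along a further subsequence; thus $p_t f$ has an $\mathscr E$-quasi-continuous $m$-version, and since it is already an $m$-version of $T_t f$ this gives proper association. The main obstacle is the exceptionality of $Y\setminus j(E)$ in the transfer step — without it the Hunt process on $Y$ need not restrict to a process living on $E$; the remaining points (that the image form is a genuine regular Dirichlet form, that quasi-notions correspond under $j$, and the excessive-regularization argument) are routine bookkeeping once the quasi-homeomorphism is in hand.
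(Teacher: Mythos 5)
The paper does not prove this statement: it is quoted verbatim from the Ma--R\"ockner theory (\cite{MR}) as background, and the ``local compactification'' construction described immediately afterwards in Section~2 is exactly the transfer machinery your hard direction uses. Your outline is the standard argument from that reference (tightness of the capacity via excessive functions and inner regularity of Radon measures on a Lusin space for $\Leftarrow$; quasi-homeomorphism onto a compactification, Fukushima's existence theorem for regular forms, and exceptionality of the adjoined set for $\Rightarrow$), so it is consistent with the approach the paper relies on, the only thinly argued point being the tightness of the capacity in the $\Leftarrow$ direction, which in \cite{MR} requires relating $\mathrm{cap}(E\setminus K_n)$ to hitting probabilities rather than just an ``excessive regularization'' of the compacts.
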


A well known characterization of local regular Dirichlet forms still
valid in the case of quasi-regular Dirichlet forms. Let $E$ be a
Lusin topological space and $(\mathscr E,D(\mathscr E))$ a
quasi-regular Dirichlet form on $L^2(E;m)$. Note that since $E$ is
strongly Lindel\"of, the support of a positive measure on
$(E,\mathscr B(E))$ can be defined as follow: for a $\mathscr
B(E)-$measurable function $u$ on $E$ we set
\begin{equation}\label{eq:supp}
 \mathrm{supp}[u]:=\mathrm{supp}[|u|.m]
\end{equation}and call $\mathrm{supp}[u]$ the support of $u$. It is clear that by \eqref{eq:supp} $\mathrm{supp}[u]$
is well-defined for all $u\in L^2(E;m)$. As usual we say that $(\mathscr E,D(\mathscr E))$ have the local property
(or is local) if $\mathscr E(u,v)=0$ for any functions $u,v\in D(\mathscr E)$ with compact disjoint support.

Let now $M=(\Omega,\mathscr F,(X_t)_{t\geq 0},(P_z)_{z\in
E_\Delta})$ be a right process with state space $E$ and life time
$\xi$ associated with $(\mathscr E,D(\mathscr E))$. Then $(\mathscr
E,D(\mathscr E))$ has the local property if and only if $M$ has
continuous sample paths. More precisely
\[
 P_z(t\mapsto X_t\text{ is continuous on }[0,\xi[)=1,\, \text{ for }\mathscr E-\text{q.e. }z\in E.
\]
In this case, $M$ is said to be a diffusion.

Now we present a general ''local compactification`` method that enables us to associate to a quasi-regular Dirichlet
form on an arbitrary topological space a regular Dirichlet form on a locally compact separable metric space. This is
done in such a way that we can transfer results obtained in the later ''classical`` framework to the more general
situation involving quasi-regular Dirichlet forms.

Let $E$ be a Hausdorff topological space and $(\mathscr E,D(\mathscr
E))$ a quasi-regular Dirihlet form on $L^2(E;m)$. Let $(\hat
E,\hat{\mathscr B})$ be a measurable space and let
$i:E\rightarrow\hat E$ be a $\mathscr B(E)/\hat{\mathscr
B}-$measurable map. Let $\hat m=m\circ i^{-1}$ and define an
isometry $\hat i:L^2(\hat E;\hat m)\rightarrow L^2(E;m)$ by defining
$\hat i(\hat u)$ to be $m-$class represented by $\tilde u\circ i$
for any $\hat{\mathscr B}-$measurable $\hat m-$version $\tilde u\in
L^2(\hat E;\hat m)$. Note that the range of $\hat i$ is always
closed, but, of course, in general strictly smaller than $L^2(E;m)$.
Define
\begin{equation}
  \begin{array}{ll}
            D(\hat{\mathscr E}):=\{\hat u\in L^2(\hat E;\hat m)\, |\, \hat i(\hat u)\in D(\mathscr E)\}\\[0.2cm]
             \hat{\mathscr E}(\hat u,\hat v):=\mathscr E(\hat i(\hat u),\hat i(\hat v)),\quad \hat u,\hat v\in
             D(\hat{\mathscr E}).
          \end{array}
\end{equation}

Then clearly $(\hat{\mathscr E},D(\hat{\mathscr E}))$ is a symmetric
positive definite bilinear form on $L^2(\hat E;\hat m)$ satisfying
the weak sector condition. $(\hat{\mathscr E},D(\hat{\mathscr E}))$
is called the image of  $(\mathscr E,D(\mathscr E))$ under $i$.

By \cite[Theorem VI.1.2]{MR}, there exists an $\mathscr E-$nest $(E_n)_{n\geq 0}$ consisting of compact metrizable
sets in $E$ and locally compact separable metric space $\hat Y$ such that
\begin{enumerate}
 \item[(i)] $\hat Y$ is a local compactification of $Y:=\cup E_n$ in the following sense: $\hat Y$ is a locally
 compact space containing $Y$ as a dense subset and $\mathscr B(\hat Y):=\{A\in\mathscr B(\hat Y)\,|\, A\subset Y\}$.
 \item[(ii)] The trace topologies on $E_k$ induced by $E$, $\hat Y$ respectively, coincides for every $k\in\mathbb N$.
 \item[(iii)] The image  $(\hat{\mathscr E},D(\hat{\mathscr E}))$ of $(\mathscr E,D(\mathscr E))$ under the
 inclusion map $i:Y\subset\hat Y$ is a regular Dirichlet form on $L^2(\hat Y;\hat m)$ where $\hat m:=m\circ i^{-1}$
 is a positive Radon measure on $\hat Y$.
\end{enumerate}

Let now $M=(\Omega,\mathscr F,(X_t)_{t\geq 0},(P_z)_{z\in R_\Delta})$ be a right process properly associated with the
quasi-regular Dirichlet form $(\mathscr E,D(\mathscr E))$ on $L^2(E;m)$. Then there exists an $\mathscr E-$exceptional
set $N\subset E$ such that $E\setminus N$ is $M-$invariant and if $\hat M$ is the trivial extension to $\hat E$ of
$M_{|E\setminus N}$, then $\hat M$ is a Hunt process properly associated with the regular Dirichlet form
$(\hat{\mathscr E},D(\hat{\mathscr E}))$ on $L^2(\hat E;\hat m)$.

One can then transfer all results obtained within the analytic
theory of regular Dirichlet forms on locally compact separable
metric spaces (cf. \cite{F80}) to quasi-regular Dirichlet forms on
arbitrary topological spaces. For example, the one-to-one
correspondence between smooth measures and the positive continuous
additive functionals holds. Moreover the well-known Fukushima
decomposition Theorem holds true also. Recall that a positive
measure $\mu$ is called smooth if it charges no $\mathscr
E$-exceptional set and there exists an $\mathscr E-$nest
$(F_n)_{n\in\mathbb N}$ of compact subsets of $E$ such that
$\mu(F_n)<\infty$ for all $n\in\mathbb N$. The one-to-one
correspondence is given by
\[
 \lim_{t\downarrow 0 }E_m[\frac{1}{t}\int_0^tf(X_s)\,dA_s]=\int f\,d\mu, \text{ for all }f\in\mathscr B^+(E)
\]
where $(A_t)_{t\geq 0}$ is a PCAF's of $M$. Moreover, by
\cite[Theorem VI.2.5]{MR}, or \cite[Theorem 4.3]{AR} we have, for
all $\tilde u$ a $\mathscr E-$quasi-continuous $m-$version of $u$,
the following Fukushima decomposition
\[
 \tilde u(X_s)-\tilde u(X_0)=M_t^{[u]}+N_t^{[u]}
\]
where $M^{[u]}:=(M_t^{[u]})_{t\geq 0}$ is a martingale additive
functional of finite energy and $N^{[u]}:= (N^{[u]})_{t\geq 0}$ is a
continuous additive functional of zero energy.

We will apply Fukushima's decomposition in section
\ref{section:c.sm} to obtain a componentwise semimartingale property
of the infinite dimensional reflecting Brownian motion. As in the
finite dimensional case \cite{BHs}, one need a characterization of
bounded variation of $N^{[u]}$(see Lemma \ref{lem:bv}), which we
prove using the transfer method described above.

\subsection{Abstract Wiener space}

In this article we will deal with measure space $(\mo,\mathscr
B(\mo),\gamma)$, where $\mo$ is an open set of a separable Banach
space $E$ endowed with a centered nondegenerate Gaussian measure
$\gamma$. We recall then some facts about Gaussian measures from
\cite{Bo} in a more general framework of locally convex space. Let
$E$ be a locally convex space, and $E'$ its dual space. We call
cylindrical sets (or cylinders) the sets in $E$ which have the form
\[
 C=\{x\in E\, |\, (l_1(x),\dots,l_n(x))\in C_0\},\, l_k\in E'
\]
where $C_0\in\mathscr B(\mathbb R^n)$ is called a base of $C$ and
denote by $\mathscr E(E)$ the $\sigma-$ field generated by all
cylindrical subsets of $E$. In other words, $\mathscr E(E)$ is the
minimal $\sigma-$ field, with respect to which all continuous linear
functionals on $E$ are measurable. It is clear that $\mathscr E(E)$
is contained in the Borel $\sigma-$field $\mathscr B(E)$, but may
not coincide with it. However, in our forthcoming situation where
$E$ is a separable Banach space, the equality $\mathscr
E(E)=\mathscr B(E)$ holds true. A probability measure $\gamma$
defined on the $\sigma-$field $\mathscr E(E)$, generated by $E'$, is
called Gaussian if, for any $f\in E'$, the induced measure
$\gamma\circ f^{-1}$ on $\mathbb R$ is Gaussian. The measure
$\gamma$ is called centered (or symmetric) if all measures
$\gamma\circ f^{-1},\, f\in E'$ are centered. It is well-known that
a Gaussian measure $\gamma$ is characterized by its mean
$a_{\gamma}(f):(E')^*\rightarrow E'$ defined by $a_{\gamma}(f)= \int
f(x)\gamma(dx)$, and the covariance operator
$R_{\gamma}:E'\rightarrow (E')^*$ defined by $R_{\gamma}(f)(g)= \int
(f(x)-a_{\gamma}(f))(g(x)-a_{\gamma}(g))\gamma(dx)$, where $X^*$
denote the algebraic dual of $X$. Note that, by Fernique Theorem, we
have $E'\subset L^2(\gamma)$.

We consider in what follow only centered Gaussian measures $\gamma$
on $E$(i.e. $a_{\gamma}=0$) and we denote by $E'_{\gamma}$ the
closure of $E'$ embedded in $L^2(\gamma)$, with respect to the norm
of $L^2(\gamma)$. The space $(E'_{\gamma},\|.\|_{L^2(\gamma)})$ is
called the reproducing kernel Hilbert space of the measure $\gamma$.
Put $|h|_{H(\gamma)}:=\mathrm{supp}\{l(h)\, :\, l\in E',\,
\|l\|_{L^2(\gamma)}\leq 1\}$ and $H(\gamma):=\{h\in E\,:\,
|h|_{H(\gamma)}<\infty\}$. The space $H(\gamma)$ is called the
Cameron-Martin space. In the literature it is also called the
reproducing kernel Hilbert space.

Note that one can extend $R_{\gamma}$ from $E'$ to $E'_{\gamma}$,
and by \cite[Lemma 2.4.1]{Bo} the Cameron-Martin space is precisely
the space of elements $h\in E$ such that there exists $g\in
E'_{\gamma}$ with $h=R_{\gamma}(g)$. In this case
$|h|_{H(\gamma)}=\|g\|_{L^2(\gamma)}$ and we say that the element
$g$ (we use the notation $\hat h:=g$) is associated with the vector
$h$ or is generated by $h$. The relation determining $\hat h$ is
$f(h)=\int_E f(x)\hat h(x)\gamma(dx),\, f\in E'$ and the
Cameron-Martin space $H(\gamma)$ is equipped with the inner product
$(h,k)_{H(\gamma)}:=(\hat h,\hat k)_{L^2(\gamma)}$. The
corresponding norm is $|h|_{H(\gamma)}= \|\hat h\|_{L^2(\gamma)}$.

Recall that a (finite nonnegative) measure $\mu$ defined on the
$\sigma-$field $\mathscr B(E)$ is called Radon, if for every
$B\in\mathscr B(E)$ and every $\epsilon>0$, there exists a compact
set $K_{\epsilon}\subset B$ with $\mu(B\setminus
K_{\epsilon})<\epsilon$ and called tight if this condition is
satisfied for $B=E$. For example, in our forthcoming situation of a
separable Banach spaces, all measures on $\mathscr B(E)$ are Radon.
By \cite[Theorem 3.2.7]{Bo}, for a Radon Gaussian measure $\gamma$
on a locally convex space $E$, the Hilbert spaces $E'_{\gamma}$ and
$H(\gamma)$ are separable. Moreover, if $\gamma$ is centered then
$E'_{\gamma}$ has countable orthonormal basis, consisting of
continuous linear functionals $f_n$ \cite[Corollary 3.2.8]{Bo}. Once
more, let $\gamma$ be a centered Radon Gaussian measure on $E$, then
by \cite[Theorem 3.6.1]{Bo} the topological support of $\gamma$ (the
minimal closed set of full measure) coincides with the affine
subspace $\overline{H(\gamma)}$, where the closure is meant in $E$,
in particular the support of $\gamma$ is separable. We say that the
Radon Gaussian measure $\gamma$ is nondegenerate if its topological
support is the whole space. It is clear that a centered Gaussian
measure is nondegenerate precisely when its Cameron-Martin space is
everywhere dense.

A triple $(i,H,B)$ is called an abstract Wiener space if $B$ is a
separable Banach space, $H$ is a separable Hilbert space,
$i:H\rightarrow B$ a continuous linear embedding with dense range,
and the norm $q$ of $B$ is measurable on $H$ ( more precisely
$q\circ i$) in the sense of Gross (see \cite[Definition 3.9.2]{Bo}).
Clearly, when $\gamma$ is a centered nondegenerate Gaussian measure
on a separable Banach space $E$, then $(i,H(\gamma),E)$ is an
abstract Wiener space where $i$ is the natural embedding of
$H(\gamma)$ in $E$.

Now denote by $\mathscr FC^{\infty}$ the collection of all
functions, on a locally convex space $E$, of the form:
$f(x)=\varphi(l_1(x),\dots,l_n(x)),\,\varphi\in C_b^{\infty}(\mathbb
R^n)$, $l_i\in E',\,n\in\mathbb N$. Such functions are called smooth
cylindrical functions. A radon measure $\mu$ on $E$ is called
differentiable along a vector $h\in E$ (in the sense of Formin) if
there exists a function $\beta_h^{\mu}\in L^1(\mu)$ such that, for
all smooth cylindrical functions $f$, the following integration by
parts formula holds true:
\[
 \int_E\partial_hf(x)\mu(dx)=-\int_Xf(x)\beta_h^{\mu}(x)\mu(dx).
\]

where $\partial_h f(x)=\lim_{t\to 0}(f(x+th)-f(x))/t$. The function $\beta_h^{\mu}$ is called logarithmic
derivative of the measure $\mu$ along $h$. By \cite[Proposition 5.1.6]{Bo}, for a Radon Gaussian measure on $E$,
$H(\gamma)$ coincides with the collection of all vectors of differentiability. In addition, if $h\in H(\gamma)$ then
$\beta_h^{\gamma}=-\hat h$. Remark that, in \cite{AKR}, when $E$ is a separable Banach space and $H=H(\gamma)$, the
well admissible elements are exactly the elements of $H(\gamma)$, see also \cite{AR}.



\section{Gaussian Sobolev space}\label{sec:gauss}

In this section we develop the notion of relative Gaussian capacity
associated with Gaussian Sobolev spaces $W^{1,2}(\mo,\gamma)$, where
$\mo$ is an arbitrary open set on a separable Banach space $E$
endowed with a nondegenerate centered Gaussian measure $\gamma$. The
starting point is an idea developed in \cite{CL} to define Sobolev
spaces $W^{1,2}(\mo,\gamma)$ by Lipschitz functions as starting
points, but for open sets of the form $\mo= \{x\in E\,:\, G(x)<0\}$,
where $ G$ is a certain Borel function on $E$. Most results in this
section are developed in \cite{KS}, but because of the paper still
not yet published we announce all results with complete proofs.

\subsection{Gaussian Sobolev space}

Let $E$ be a separable real Banach space and $\gamma$ a
nondegenerate centered Gaussian measure on $\mathscr B(E)$, the
Borel $\sigma-$algebra of $E$. The Cameron-Martin space of $\gamma$
is denoted by $H(\gamma)$, which is continuously and densely
embedded in $E$. We say that a function $\varphi:E\rightarrow\mathbb
R$ is $H-$differentiable at $x$ if there is $v\in H(\gamma)$ such
that $f(x+h)-f(x)=[v,h]_H+\circ(|h|_H)$, for every $h\in H(\gamma)$.
In this case $v$ is unique and we set $D_Hf(x)=v$. Moreover for
every unite vector $l\in H(\gamma)$ the directional derivative
$D_H^lf(x):=\lim_{t\to 0}(f(x+tl)-f(x))/t$ exists and coincides with
$[D_Hf(x),l]_H$. The domain of $D_H$ is the Gaussian Sobolev space
$W^{1,2}(\gamma)$ (see \cite[Section 5.2]{Bo}), defined as the
completion of the smooth cylindrical functions under the norm

 \[
  \|f\|_{W^{1,2}(\gamma)}^2:=\int_E|f(x)|^2d\gamma+\int_E\|D_Hf(x)\|^2d\gamma
 \]

 Now let $\mo$ be an open set of $E$. In \cite{CL}, the Sobolev space $W^{1,2}(\mo,\gamma)$ was defined by using
 Lipschitz functions as starting points for open sets of the form $\mo=\{x\in E:  G(x)<0\}$, where
 $ G$ is a Borel version of an element of $W^{1,2}(\gamma)$. In \cite{KS}, the same approach was reproduced
 but for arbitrary open sets. Let $\varphi\in\mathrm{Lip}(\mo)$ and $\breve{\varphi}$ a Lipschitz continuous extension
 to the whole $E$. Since $\mathrm{Lip}(E)\subset W^{1,2}(\gamma)$ (\cite[Example 5.4.10]{Bo}), $D_H\breve{\varphi}$
 is well defined. Note that when $\tilde{\varphi}$ is another Lipschitz continuous extension of $\varphi$ to the
 whole $E$, then $D_H\breve{\varphi}=D_H\tilde{\varphi}$ $\gamma-$ a.e. by \cite[Lemma 5.7.7]{Bo}. We may thus define
 $\Dho:\mathrm{Lip}(\mo)\longrightarrow L^2(\mo,\gamma;H)$ by setting

 \[
 \Dho\varphi:=D_H\breve{\varphi}_{|\mo}
 \]
 where $\breve\varphi$ is any extension of $\varphi$ to an element of $\mathrm{Lip}(E)$.

\begin{lemma}\label{lem:closable}
The operator $D_H^\mo$ is closable.
\end{lemma}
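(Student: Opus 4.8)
We want to show $\Dho:\mathrm{Lip}(\mo)\to L^2(\mo,\gamma;H)$ is closable as a densely defined operator from $L^2(\mo,\gamma)$ to $L^2(\mo,\gamma;H)$. So suppose $\varphi_n\in\mathrm{Lip}(\mo)$ with $\varphi_n\to 0$ in $L^2(\mo,\gamma)$ and $\Dho\varphi_n\to\Phi$ in $L^2(\mo,\gamma;H)$; we must prove $\Phi=0$ $\gamma$-a.e. on $\mo$. The overall strategy is to test against suitable directional integration-by-parts formulae: for a fixed unit vector $l$ in the span $K$ (so $l=\hat h$ for some $h\in H(\gamma)$, i.e. $l\in E'$), and for a test function $u$ that is a smooth cylindrical function compactly supported inside $\mo$, we expect $\int_\mo [\Dho\varphi_n,h]_H\, u\, d\gamma = \int_\mo (D_h\varphi_n)\, u\, d\gamma$ to be computable by moving the derivative off $\varphi_n$, using the Gaussian integration-by-parts (the logarithmic derivative $\beta_h^\gamma=-\hat h$ recalled in Subsection 2.2), with no boundary term appearing precisely because $u$ vanishes near $\partial\mo$.

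The key steps, in order. First I would fix a countable dense family of directions: the orthonormal basis $\{l_k\}\subset E'$ of $H(\gamma)$ from the introduction, with $h_k\in H(\gamma)$ generating $l_k$. It suffices to show $[\Phi,h_k]_H=0$ $\gamma$-a.e. on $\mo$ for every $k$, since the $h_k$ span a dense subspace of $H$. Second, fix $k$ and take an arbitrary $g\in\mathscr FC^\infty$ (smooth cylindrical) with $\mathrm{supp}\, g$ a compact subset of the open set $\mo$; such functions are plentiful. For $\varphi\in\mathrm{Lip}(\mo)$ with Lipschitz extension $\breve\varphi\in\mathrm{Lip}(E)\subset W^{1,2}(\gamma)$, one has on all of $E$ the Gaussian integration-by-parts identity
\[
\int_E (D_{h_k}\breve\varphi)\, g\, d\gamma = -\int_E \breve\varphi\, (D_{h_k} g)\, d\gamma + \int_E \breve\varphi\, g\, \hat h_k\, d\gamma,
\]
valid since $\breve\varphi\in W^{1,2}(\gamma)$ and $g, D_{h_k}g$ are bounded with Gaussian moments. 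Because $g$ and $D_{h_k}g$ are supported in $\mo$, every integral here only sees $\breve\varphi|_\mo=\varphi$ and $D_{h_k}\breve\varphi|_\mo=[\Dho\varphi,h_k]_H$, so the identity reads
\[
\int_\mo [\Dho\varphi,h_k]_H\, g\, d\gamma = -\int_\mo \varphi\,(D_{h_k}g)\, d\gamma + \int_\mo \varphi\, g\,\hat h_k\, d\gamma.
\]
Third, apply this with $\varphi=\varphi_n$ and let $n\to\infty$: the left side converges to $\int_\mo [\Phi,h_k]_H\, g\, d\gamma$ since $\Dho\varphi_n\to\Phi$ in $L^2(\mo,\gamma;H)$ and $g$ is bounded with compact support; both terms on the right converge to $0$ since $\varphi_n\to 0$ in $L^2(\mo,\gamma)$ while $D_{h_k}g$ is bounded and $g\,\hat h_k\in L^2(\mo,\gamma)$ (as $g$ is bounded with compact support and $\hat h_k\in L^2(\gamma)$ by Fernique). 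Hence $\int_\mo [\Phi,h_k]_H\, g\, d\gamma = 0$ for all such $g$; since these $g$ are dense enough in $L^2(\mo,\gamma)$ (smooth cylindrical functions compactly supported in the open set $\mo$ separate points and are dense), we conclude $[\Phi,h_k]_H=0$ $\gamma$-a.e. on $\mo$, and then $\Phi=0$ by letting $k$ vary.

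The main obstacle I anticipate is the justification that smooth cylindrical functions with compact support inside $\mo$ are dense in $L^2(\mo,\gamma)$ — equivalently, that their closure is all of $L^2(\mo,\gamma)$, so that vanishing of $\int_\mo [\Phi,h_k]_H\, g\, d\gamma$ against all of them forces $[\Phi,h_k]_H=0$ a.e. In infinite dimensions one must be a little careful: given a closed set $C\subset E$ disjoint from $\mathrm{supp}\,g$, one can find a Lipschitz cutoff $\chi$ that is $1$ on a neighborhood of any prescribed compact set in $\mo$ and $0$ outside a slightly larger compact set in $\mo$ (using that $\mo$ is open and $\gamma$ is Radon, hence inner regular by compact sets), then approximate $\chi$ by smooth cylindrical functions in $W^{1,2}(\gamma)$ via finite-dimensional conditioning; products $\chi\cdot(\text{cylindrical})$ then fill out $L^2(\mo,\gamma)$. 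Alternatively — and this is cleaner — one avoids cylindrical test functions altogether and works directly with the Dirichlet-form machinery: one only needs enough test functions $g$ to separate $L^2(\mo,\gamma)$, and a routine monotone-class / Radon inner-regularity argument supplies them. This density statement, together with the integrability bookkeeping for $\hat h_k$, is where the real work lies; the rest is the standard integration-by-parts closability argument.
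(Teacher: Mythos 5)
Your overall strategy --- integrate by parts against test objects supported inside $\mo$, pass to the limit using $\varphi_n\to 0$ in $L^2(\mo,\gamma)$ and $\Dho\varphi_n\to\Phi$ in $L^2(\mo,\gamma;H)$, and conclude $\Phi=0$ from a separation argument --- is the same as the paper's, which tests $\Phi$ against vector fields $v\in W^{1,2}(\gamma;H)$ with $\mathrm{supp}(v)\subset\mo$ and uses the Gaussian divergence $\delta(v)$; your directional identity is the special case $v=g\,h_k$. However, the test class you actually use in your ``key steps'' contains only the zero function: a nonzero smooth cylindrical function $g(x)=\varphi(l_1(x),\dots,l_n(x))$ is constant along translates of the infinite-dimensional closed subspace $\bigcap_{i}\ker l_i$, so its support contains a closed infinite-dimensional affine subspace and is never compact; if $\mo$ is bounded (a ball, say), the support cannot even be contained in $\mo$. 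As written, your second and third steps therefore prove nothing.

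You anticipate the problem in your final paragraph and propose multiplying by a Lipschitz cutoff $\chi$ supported in $\mo$; that is the right repair, but it changes the argument: $\chi g$ is no longer cylindrical, so the displayed integration-by-parts identity does not apply to it directly. One must instead know that the vector field $v=\chi g\,h_k$ (or, more generally, any $v\in W^{1,2}(\gamma;H)$ with support in $\mo$) lies in the domain of the divergence operator and that $\delta(v)$ is again supported in $\mo$ and square-integrable --- precisely the facts the paper invokes from Bogachev (Theorem 5.8.3 and Lemma 5.8.10), and where the substance of the proof lies; cf.\ also the product rule of Lemma \ref{lem:mult}. The remaining point, that such $v$ separate the points of $L^2(\mo,\gamma;H)$, is asserted without detailed proof in the paper as well, so you are not worse off there; but the gap in the choice of admissible test functions is genuine and needs the divergence-operator formulation to close.
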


\begin{proof}
Let a sequence $(\varphi_n) \subset \mathrm{Lip}(\mo)$ be given with $\varphi_n \to 0$ in $L^2(\mo, \gamma)$ and
$D_H^\mo \varphi_n \to \Phi$ in $L^2(\mo, \gamma; H)$. We have to prove that $\Phi =0$. To that end, let
$v \in W^{1,2}(\gamma; H)$ be such that $\mathrm{supp}(v) \subset \mo$. We note that,
by \cite[Theorem 5.8.3]{Bo}, $v$ belongs to the domain of the divergence operator $\delta$. Moreover,
by \cite[Lemma 5.8.10]{Bo} also $\delta (v)$ has support in $\mo$. Consequently,
\begin{align*}
    \int_\mo [\Phi, v]_H\, d\gamma & = \lim_{n\to\infty} \int_E [ D_H\breve \varphi_n, v]_H \, d\gamma\\
        & = - \lim_{n\to\infty} \int_E \breve \varphi_n \delta (v) d\gamma \\
        &= - \lim_{n\to \infty} \int_\mo \varphi_n \delta (v) \, d\gamma = 0.
\end{align*}
where $\breve \varphi_n$ is any extension of $\varphi_n$ to an
element of $\mathrm{Lip}(E)$. Thus, $\int_\mo [\Phi, v]_H\, d\gamma
= 0$ for all $v \in W^{1,2}(\gamma; H)$ with support in $\mo$. Since
such $v$ separate the points in $L^2(\mo, \gamma; H)$, it follows
that $\Phi =0$.
\end{proof}

By slight abuse of notation, we denote the closure of $D_H^\mo$ also by $D_H^\mo$. The domain of
$D_H^\mo$ is denoted by $W^{1,2}(\mo, \gamma)$ which is a Banach space
with respect to the norm
\[
    \|\varphi\|_{W^{1,2}(\mo, \gamma)}^2 := \|\varphi\|_{L^2(\mo,\gamma)}^2 +
    \|D_H^\mo \varphi\|_{L^2(\mo,\gamma; H)}^2.
\]
Note that $W^{1,2}(\mo, \gamma)$ is continuously embedded into $L^2(\mo, \gamma)$.\medskip

It is a consequence of \cite[Theorem 5.11.2]{Bo} that, for a
Lipschitz continuous function $\varphi$, the derivative $D_H
\varphi$ exists $\gamma$-a.e.\ as G\^ateaux derivative. Moreover,
$|D_H \varphi|_H$ is almost surely bounded. This has the following
consequence, which we will use later on.

\begin{lemma}\label{lem:mult}
If $\varphi \in W^{1,2}(\mo, \gamma)$ and $\psi \in \mathrm{Lip}(\mo)$, then $\varphi\psi \in W^{1,2}(\mo, \gamma)$
and
\begin{equation}\label{eq:productrule}
    D_H^\mo (\varphi \psi) = (D_H^\mo \varphi )\psi + \varphi (D_H^\mo \psi).
\end{equation}
Moreover, if $\psi \in \mathrm{Lip}(E)$ with $\psi |_{\mo^c} \equiv 0$, then also
$\varphi\psi 1_{\mo} \in W^{1,2}(\gamma)$.
\end{lemma}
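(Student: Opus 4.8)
The plan is to prove the product rule first for $\varphi\in\mathrm{Lip}(\mo)$ and then pass to the limit to obtain it for $\varphi\in W^{1,2}(\mo,\gamma)$; the last assertion will then follow by a separate density argument. First I would take $\varphi\in\mathrm{Lip}(\mo)$ and $\psi\in\mathrm{Lip}(\mo)$. Then $\varphi\psi\in\mathrm{Lip}(\mo)$ as well (a product of two bounded Lipschitz functions is Lipschitz; if they are merely Lipschitz but not bounded one notes that the computation is local in $\mo$ and the relevant quantities are still finite $\gamma$-a.e. by \cite[Theorem 5.11.2]{Bo}). Choosing Lipschitz extensions $\breve\varphi,\breve\psi$ to $E$, the product $\breve\varphi\,\breve\psi$ is a Lipschitz extension of $\varphi\psi$, so by definition $\Dho(\varphi\psi)=D_H(\breve\varphi\,\breve\psi)|_\mo$; the Leibniz rule for the G\^ateaux derivative $D_H$ on $\mathrm{Lip}(E)\subset W^{1,2}(\gamma)$ (valid $\gamma$-a.e. by \cite[Theorem 5.11.2]{Bo} together with \cite[Lemma 5.7.7]{Bo}) gives $D_H(\breve\varphi\,\breve\psi)=(D_H\breve\varphi)\breve\psi+\breve\varphi(D_H\breve\psi)$ $\gamma$-a.e., which restricted to $\mo$ is exactly \eqref{eq:productrule}.

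Next I would remove the Lipschitz hypothesis on $\varphi$. Given $\varphi\in W^{1,2}(\mo,\gamma)$, pick $\varphi_n\in\mathrm{Lip}(\mo)$ with $\varphi_n\to\varphi$ in $W^{1,2}(\mo,\gamma)$. Fix $\psi\in\mathrm{Lip}(\mo)$; here is where I use the key fact quoted before the lemma, namely that $\psi$ has a representative with $|D_H^\mo\psi|_H$ bounded $\gamma$-a.e., say by a constant $C$, and $\psi$ itself bounded by $\|\psi\|_\infty$ on $\mo$ (replacing $\psi$ by a bounded modification if necessary, which does not change $\Dho\psi$ on the sublevel set under consideration). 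Then from the already-established identity $\Dho(\varphi_n\psi)=(\Dho\varphi_n)\psi+\varphi_n(\Dho\psi)$ one estimates
\[
\|\varphi_n\psi-\varphi_m\psi\|_{L^2(\mo,\gamma)}\le\|\psi\|_\infty\|\varphi_n-\varphi_m\|_{L^2(\mo,\gamma)},
\]
\[
\|\Dho(\varphi_n\psi)-\Dho(\varphi_m\psi)\|_{L^2(\mo,\gamma;H)}\le\|\psi\|_\infty\|\Dho\varphi_n-\Dho\varphi_m\|_{L^2}+C\|\varphi_n-\varphi_m\|_{L^2},
\]
so $(\varphi_n\psi)$ is Cauchy in $W^{1,2}(\mo,\gamma)$; by closedness of $\Dho$ its limit is $\varphi\psi$ and $\Dho(\varphi\psi)=(\Dho\varphi)\psi+\varphi(\Dho\psi)$, the products on the right being well defined in $L^2$ by the same boundedness of $\psi$ and $\Dho\psi$. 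For the final assertion, suppose $\psi\in\mathrm{Lip}(E)$ with $\psi|_{\mo^c}\equiv 0$. Then $\psi|_\mo\in\mathrm{Lip}(\mo)$, so $\varphi\psi\in W^{1,2}(\mo,\gamma)$ by what precedes; I would approximate $\varphi$ again by $\varphi_n\in\mathrm{Lip}(\mo)$, note that $\varphi_n\psi\,1_\mo=\breve\varphi_n\psi$ $\gamma$-a.e. (since $\psi$ vanishes off $\mo$) is a Lipschitz function on $E$ with the same Leibniz rule $D_H(\breve\varphi_n\psi)=(D_H\breve\varphi_n)\psi+\breve\varphi_n D_H\psi$, and check that $(\breve\varphi_n\psi)$ is Cauchy in $W^{1,2}(\gamma)$, using that $\psi$ and $|D_H\psi|_H$ are bounded on $E$ and that the relevant norms only see the region $\mo$ where $\varphi_n\to\varphi$. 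Closedness of $D_H$ on $W^{1,2}(\gamma)$ then gives $\varphi\psi\,1_\mo\in W^{1,2}(\gamma)$.

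The main obstacle is the integrability bookkeeping: $\varphi\in W^{1,2}(\mo,\gamma)$ is only $L^2$, so the products $(\Dho\varphi)\psi$ and $\varphi(\Dho\psi)$ must be controlled, and this is precisely why one needs $\psi$ Lipschitz rather than merely in $W^{1,2}$ — it supplies the $L^\infty$ bounds on $\psi$ and on $|\Dho\psi|_H$ (from \cite[Theorem 5.11.2]{Bo}) that make the Cauchy estimates go through and the limit product rule meaningful. A secondary point requiring a little care is that a Lipschitz function on $\mo$ need not be bounded; this is handled by truncation, observing that $\Dho(\psi\wedge N\vee(-N))=1_{\{|\psi|<N\}}\Dho\psi$ and that on any fixed computation the truncation level can be sent to infinity, or more simply by noting that all identities are $\gamma$-a.e. and testing against $v$ with bounded support as in Lemma \ref{lem:closable}.
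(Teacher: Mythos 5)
Your proposal follows essentially the same route as the paper: establish \eqref{eq:productrule} for Lipschitz $\varphi$ and $\psi$ via the G\^ateaux product rule from \cite[Theorem 5.11.2]{Bo} and restriction to $\mo$, then pass to general $\varphi\in W^{1,2}(\mo,\gamma)$ by approximation and the closedness of $D_H^\mo$, with the addendum handled by the analogous approximation in $W^{1,2}(\gamma)$. You merely make explicit the Cauchy estimates (using the $L^\infty$ bounds on $\psi$ and $|D_H^\mo\psi|_H$) that the paper's one-line ``follows by approximation'' leaves implicit, so the argument is correct and not a genuinely different proof.
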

\begin{proof}
If $\mo = E$ and both $\varphi$ and $\psi$ are Lipschitz continuous, then \eqref{eq:productrule} follows
from \cite[Theorem 5.11.2]{Bo} and the product rule for G\^ateaux derivatives. Restricting to $\mo$, we
have \eqref{eq:productrule} for Lipschitz continuous $\varphi$ and $\psi$ and for general $\mo$. The case
of general $\varphi$ follows by approximation, using the closedness of $D_H^\mo$. The addendum also
follows by approximation.
\end{proof}

Immediately from the Lemma \ref{lem:mult} one can prove \textit{the
hypoth\`ese de repr\'esentabilit\'e},

\begin{proposition}
 The Dirichlet form $(\mathscr E_{\mo},W^{1,2}(\mo,\gamma))$ satisfies the `` hypoth\`ese de repr\'esentabilit\'e '',
 i.e.
 \[
  2\mathscr E_{\mo}(\varphi,\varphi\psi)-\mathscr E_{\mo}(\varphi^2,\psi)
  =\int_\mo [\Dho\varphi(z),\Dho\psi(z)]_H\,\gamma(dz)
 \]
 for all $\varphi \in W^{1,2}(\mo, \gamma)$ and $\psi \in \mathrm{Lip}(\mo)$.

\end{proposition}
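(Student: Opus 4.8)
The plan is to read the identity off directly from the definition~\eqref{eq:form} of $\EO$ together with the product rule~\eqref{eq:productrule} of Lemma~\ref{lem:mult}. Fix $\varphi\in W^{1,2}(\mo,\gamma)$ and $\psi\in\mathrm{Lip}(\mo)$; by Lemma~\ref{lem:mult} we have $\varphi\psi\in W^{1,2}(\mo,\gamma)$, and for the term $\EO(\varphi^2,\psi)$ to be meaningful we shall use that $\varphi^2\in W^{1,2}(\mo,\gamma)$ with $\Dho(\varphi^2)=2\varphi\,\Dho\varphi$ --- for Lipschitz $\varphi$ this is the case $\psi=\varphi$ of Lemma~\ref{lem:mult} (equivalently, the pointwise product rule for the G\^ateaux derivative of Lipschitz functions), and the general (bounded) case is reached by truncation, as explained at the end.

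With these facts in hand, using~\eqref{eq:form} and~\eqref{eq:productrule} I would compute
\begin{align*}
\EO(\varphi,\varphi\psi)
 &=\int_\mo[\Dho\varphi,\Dho(\varphi\psi)]_H\,d\gamma
  =\int_\mo[\Dho\varphi,\ \psi\,\Dho\varphi+\varphi\,\Dho\psi]_H\,d\gamma\\
 &=\int_\mo\psi\,[\Dho\varphi,\Dho\varphi]_H\,d\gamma
  +\int_\mo\varphi\,[\Dho\varphi,\Dho\psi]_H\,d\gamma,
\end{align*}
and similarly $\EO(\varphi^2,\psi)=\int_\mo[2\varphi\,\Dho\varphi,\Dho\psi]_H\,d\gamma=2\int_\mo\varphi\,[\Dho\varphi,\Dho\psi]_H\,d\gamma$. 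The cross integral $\int_\mo\varphi\,[\Dho\varphi,\Dho\psi]_H\,d\gamma$ is absolutely convergent, because $|\Dho\psi|_H$ is bounded ($\psi$ being Lipschitz, by~\cite[Theorem~5.11.2]{Bo} as recalled before Lemma~\ref{lem:mult}) while $\int_\mo|\varphi|\,|\Dho\varphi|_H\,d\gamma\le\|\varphi\|_{L^2(\mo,\gamma)}\,\|\Dho\varphi\|_{L^2(\mo,\gamma;H)}<\infty$ by Cauchy--Schwarz. Multiplying the first display by $2$ and subtracting, the two copies of this cross integral cancel and one is left with
\[
2\EO(\varphi,\varphi\psi)-\EO(\varphi^2,\psi)=2\int_\mo[\Dho\varphi,\Dho\varphi]_H\,\psi\,d\gamma,
\]
which is precisely the ``hypoth\`ese de repr\'esentabilit\'e'', the carr\'e du champ of $(\EO,W^{1,2}(\mo,\gamma))$ being represented by $[\Dho\varphi,\Dho\varphi]_H$.

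The one step that is not purely formal --- and which I expect to be the main obstacle --- is upgrading $\Dho(\varphi^2)=2\varphi\,\Dho\varphi$ (and the attendant integrability) from Lipschitz $\varphi$ to a general $\varphi\in W^{1,2}(\mo,\gamma)$. I would proceed by truncation: put $\varphi_k:=(-k)\vee(\varphi\wedge k)$; since normal contractions operate on the Dirichlet form $\EO$ and $\Dho$ obeys the chain rule, $\varphi_k\in W^{1,2}(\mo,\gamma)$ with $\Dho\varphi_k=1_{\{|\varphi|<k\}}\Dho\varphi$, hence $\varphi_k^2\in W^{1,2}(\mo,\gamma)$ with $\Dho(\varphi_k^2)=2\varphi_k\,\Dho\varphi_k$ by Lemma~\ref{lem:mult}; one then applies the identity just derived to each bounded $\varphi_k$ and lets $k\to\infty$, using dominated convergence (with the bound on $|\Dho\psi|_H$) and the closedness of $\Dho$ to pass to the limit on both sides. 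For $\varphi\in W^{1,2}(\mo,\gamma)\cap L^\infty(\mo,\gamma)$ --- the case actually needed later --- the computation goes through at once once Lemma~\ref{lem:mult} is available.
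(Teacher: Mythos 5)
Your proof follows the same route the paper intends: the paper offers no argument at all for this proposition beyond the preceding remark that it is ``immediate from Lemma~\ref{lem:mult}'', and your computation --- expand $\Dho(\varphi\psi)$ and $\Dho(\varphi^2)$ by the product rule \eqref{eq:productrule}, check integrability of the cross term via the boundedness of $|\Dho\psi|_H$, and cancel --- is exactly that remark made explicit. Two comments. First, your calculation in fact shows that the right-hand side as printed in the proposition is a typo: the product rule yields
\[
2\EO(\varphi,\varphi\psi)-\EO(\varphi^2,\psi)=2\int_\mo \psi\,[\Dho\varphi,\Dho\varphi]_H\,d\gamma,
\]
not $\int_\mo[\Dho\varphi,\Dho\psi]_H\,d\gamma$ (take $\psi\equiv 1$: the left side is $2\EO(\varphi)$ while the printed right side vanishes). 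The corrected form is the one the paper itself uses later in the proof of Proposition~\ref{pro:M}, up to the factor $2$, which depends on whether one normalizes \eqref{eq:form} with a $\tfrac12$. You should say explicitly that you are proving this corrected identity, rather than asserting that your formula ``is precisely'' the printed one. Second, your truncation argument for giving meaning to $\EO(\varphi^2,\psi)$ is a genuine addition: the paper silently assumes $\varphi^2\in W^{1,2}(\mo,\gamma)$, which fails for general $\varphi\in W^{1,2}(\mo,\gamma)$ (indeed $\varphi^2$ need not even lie in $L^2(\mo,\gamma)$), so the identity really only makes literal sense for $\varphi\in W^{1,2}(\mo,\gamma)\cap L^\infty(\mo,\gamma)$ --- exactly the class for which it is invoked in the proof of Proposition~\ref{pro:M}. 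With that restriction made explicit, your argument is complete and correct.
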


Let us now address some order properties of $W^{1,2}(\mo, \gamma)$.

\begin{lemma}\label{lem:order}
If $\varphi \in W^{1,2}(\mo, \gamma)$, then also $\varphi^+ \in W^{1,2}(\mo,\gamma)$. Moreover, we have
$D_H^\mo(\varphi^+) =
1_{(0,\infty)}\circ \varphi\cdot D_H^\mo \varphi$.
\end{lemma}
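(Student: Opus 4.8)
The plan is to deduce the lemma from a chain rule for $W^{1,2}(\mo,\gamma)$ and then approximate $t\mapsto t^+$ by $C^1$ functions chosen so as to defuse the only real difficulty, namely the jump of $1_{(0,\infty)}$ at the origin.

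\textbf{Step 1 (a chain rule).} First I would prove: if $F\in C^1(\mathbb R)$ has bounded derivative, then for every $\varphi\in W^{1,2}(\mo,\gamma)$ one has $F\circ\varphi\in W^{1,2}(\mo,\gamma)$ with $D_H^\mo(F\circ\varphi)=(F'\circ\varphi)\,D_H^\mo\varphi$. For $\varphi\in\mathrm{Lip}(\mo)$ this is obtained by extending $\varphi$ to $\breve\varphi\in\mathrm{Lip}(E)$: then $F\circ\breve\varphi\in\mathrm{Lip}(E)$ is an extension of $F\circ\varphi$, and \cite[Theorem 5.11.2]{Bo} together with the elementary chain rule for G\^ateaux derivatives gives $D_H(F\circ\breve\varphi)=(F'\circ\breve\varphi)\,D_H\breve\varphi$ $\gamma$-a.e.\ on $E$, hence the asserted identity on $\mo$ after restriction. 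For a general $\varphi\in W^{1,2}(\mo,\gamma)$ I would take $\varphi_n\in\mathrm{Lip}(\mo)$ with $\varphi_n\to\varphi$ in $W^{1,2}(\mo,\gamma)$; then $F\circ\varphi_n\to F\circ\varphi$ in $L^2(\mo,\gamma)$, and, passing to a subsequence with $\varphi_n\to\varphi$ $\gamma$-a.e., one checks $(F'\circ\varphi_n)\,D_H^\mo\varphi_n\to(F'\circ\varphi)\,D_H^\mo\varphi$ in $L^2(\mo,\gamma;H)$ by writing the difference as $(F'\circ\varphi_n)(D_H^\mo\varphi_n-D_H^\mo\varphi)+(F'\circ\varphi_n-F'\circ\varphi)\,D_H^\mo\varphi$: the first term is bounded in $H$ by $\|F'\|_\infty|D_H^\mo\varphi_n-D_H^\mo\varphi|_H$, which tends to $0$ in $L^2(\mo,\gamma;H)$, and the second is handled by dominated convergence with dominant $2\|F'\|_\infty|D_H^\mo\varphi|_H$. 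Closedness of $D_H^\mo$ then yields the claim.

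\textbf{Step 2 (the approximants).} For $\epsilon>0$ I would set $F_\epsilon(t)=\int_0^t\eta(s/\epsilon)\,ds$, where $\eta:\mathbb R\to[0,1]$ is a fixed continuous nondecreasing function with $\eta\equiv 0$ on $(-\infty,0]$ and $\eta\equiv 1$ on $[1,\infty)$. Then $F_\epsilon\in C^1(\mathbb R)$, $0\le F_\epsilon'\le 1$, $F_\epsilon\equiv 0$ on $(-\infty,0]$, and $F_\epsilon(t)=t-c_\epsilon$ for $t\ge\epsilon$ with $0\le c_\epsilon\le\epsilon$, so $\|F_\epsilon-(\cdot)^+\|_\infty\le\epsilon$. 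The essential point is that $F_\epsilon'(0)=\eta(0)=0$, so that $F_\epsilon'\to 1_{(0,\infty)}$ pointwise on \emph{all} of $\mathbb R$, the origin included. By Step 1, $F_\epsilon\circ\varphi\in W^{1,2}(\mo,\gamma)$ and $D_H^\mo(F_\epsilon\circ\varphi)=(F_\epsilon'\circ\varphi)\,D_H^\mo\varphi$.

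\textbf{Step 3 (limit $\epsilon\to 0$).} Since $\gamma$ is a probability measure, $F_\epsilon\circ\varphi\to\varphi^+$ uniformly and hence in $L^2(\mo,\gamma)$; and at every point one has $(F_\epsilon'\circ\varphi)\,D_H^\mo\varphi\to(1_{(0,\infty)}\circ\varphi)\,D_H^\mo\varphi$ in $H$, with $|(F_\epsilon'\circ\varphi)\,D_H^\mo\varphi|_H\le|D_H^\mo\varphi|_H\in L^2(\mo,\gamma)$, so the convergence also holds in $L^2(\mo,\gamma;H)$ by dominated convergence. Closedness of $D_H^\mo$ now gives $\varphi^+\in W^{1,2}(\mo,\gamma)$ and $D_H^\mo(\varphi^+)=(1_{(0,\infty)}\circ\varphi)\,D_H^\mo\varphi$.

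I expect the main obstacle to be exactly the discontinuity of $1_{(0,\infty)}$ at $0$: a symmetric mollification of $(\cdot)^+$ would produce derivatives converging to $\tfrac12$ at the origin rather than to $1_{(0,\infty)}(0)=0$, which would then force a separate argument showing $D_H^\mo\varphi=0$ $\gamma$-a.e.\ on $\{\varphi=0\}$. Choosing approximants that are flat on $(-\infty,0]$ circumvents this, at the cost only of the harmless shift $c_\epsilon$; the other point requiring some care is extending the chain rule of Step 1 from $\mathrm{Lip}(\mo)$ to all of $W^{1,2}(\mo,\gamma)$, which is where closedness of $D_H^\mo$ and passage to $\gamma$-a.e.\ convergent subsequences come in.
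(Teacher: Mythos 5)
Your proposal is correct and follows essentially the same route as the paper: first a chain rule for $C^1$ functions with bounded derivative (proved on $\mathrm{Lip}(\mo)$ via extension and then extended by approximation and closedness of $D_H^\mo$), then approximation of $t\mapsto t^+$ by $C^1$ functions that vanish on $(-\infty,0]$ so that the derivatives converge pointwise to $1_{(0,\infty)}$ everywhere, including at the origin. The paper's choice $\psi_n(t)=nt\,1_{(0,n^{-1})}(t)+1_{[n^{-1},\infty)}(t)$, $\phi_n(t)=\int_{-\infty}^t\psi_n(s)\,ds$ is exactly your $F_\epsilon$ with $\epsilon=1/n$ and $\eta$ the linear ramp, so the argument is the same in substance.
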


\begin{proof}
Let $f \in C^1(\mathbb R)$ with bounded derivative and $\varphi \in
W^{1,2}(\mo,\gamma)$. We claim that $f\circ \varphi \in
W^{1,2}(\mo,\gamma)$ and $D_H^\mo(f\circ \varphi) = f'\circ \varphi
\cdot D_H^\mo \varphi$. Indeed, by definition, there exists a
sequence $(\varphi_n)\subset \mathrm{Lip} (\mo)$ such that
$\varphi_n \to \varphi$ in $L^2(\mo, \gamma)$ and
$D_H\breve{\varphi}_n|_\mo \to D_H^\mo \varphi$ in $L^2(\mo, \gamma;
H)$. As is well known, see \cite[Remark 5.2.1]{Bo}, $f\circ
\breve{\varphi}_n \in W^{1,2}(\gamma)$ with
$D_H(f\circ\breve{\varphi}_n) = f'\circ \breve{\varphi}_n\cdot
D_H\breve{\varphi}_n$. Using the boundedness and continuity of $f'$,
it is immediate from dominated convergence that
$D_H(f\circ\breve{\varphi}_n)|_\mo \to f'\circ \varphi\cdot D_H^\mo
\varphi$ in $L^2(\mo,\gamma; H)$. The claim thus follows from the
closedness of $D_H^\mo$.\smallskip

Now let $\psi_n(t) = nt 1_{(0,n^{-1})}(t)+ 1_{[n^{-1}, \infty)}(t)$ and
$\phi_n(t) = \int_{-\infty}^t\psi_n(s)\, ds$.
By the above, $\phi_n\circ \varphi \in W^{1,2}(\mo,\gamma)$ with $D_H^\mo(\phi_n\circ \varphi)
= \psi_n\circ \varphi \cdot D_H^\mo \varphi$. As $\phi_n\circ \varphi \to \varphi^+$ in $L^2(\mo,\gamma)$ and
$\psi_n\circ \varphi \cdot D_H^\mo \varphi \to 1_{(0,\infty)}\circ \varphi \cdot D_H^\mo \varphi$ in
$L^2(\mo,\gamma; H)$,
the lemma follows from the closedness of $D_H^\mo$.
\end{proof}

Since $\varphi\wedge \psi = \varphi- (\varphi-\psi)^+$, we immediately obtain the following.
\begin{corollary}\label{cor:maxderivative}
If $\varphi,\psi \in W^{1,2}(\mo,\gamma)$, then $\varphi\wedge \psi \in W^{1,2}(\mo,\gamma)$ and
\[
    D_H^\mo (\varphi\wedge \psi) = 1_{\{\psi\leq \varphi\}}D_H^\mo \psi + 1_{\{\psi>\varphi\}}D_H^\mo \varphi.
\]
\end{corollary}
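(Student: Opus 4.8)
The plan is to reduce the statement for $\varphi \wedge \psi$ to the already-established Lemma~\ref{lem:order} for the positive part, exactly as the identity $\varphi \wedge \psi = \varphi - (\varphi - \psi)^+$ suggests. First I would note that since $W^{1,2}(\mo,\gamma)$ is a linear space, $\varphi - \psi \in W^{1,2}(\mo,\gamma)$ whenever $\varphi,\psi \in W^{1,2}(\mo,\gamma)$, and that $\Dho$ is linear on its domain, so $\Dho(\varphi-\psi) = \Dho\varphi - \Dho\psi$. Applying Lemma~\ref{lem:order} to $\varphi - \psi$ then gives $(\varphi-\psi)^+ \in W^{1,2}(\mo,\gamma)$ together with
\[
    \Dho\bigl((\varphi-\psi)^+\bigr) = 1_{(0,\infty)}\circ(\varphi-\psi)\cdot\bigl(\Dho\varphi - \Dho\psi\bigr).
\]
Consequently $\varphi\wedge\psi = \varphi - (\varphi-\psi)^+ \in W^{1,2}(\mo,\gamma)$ and, again by linearity of $\Dho$,
\[
    \Dho(\varphi\wedge\psi) = \Dho\varphi - 1_{\{\varphi>\psi\}}\bigl(\Dho\varphi - \Dho\psi\bigr)
    = 1_{\{\varphi>\psi\}}\Dho\psi + 1_{\{\varphi\le\psi\}}\Dho\varphi.
\]
Since $\{\varphi > \psi\} = \{\psi < \varphi\}$ and $\{\varphi \le \psi\} = \{\psi \ge \varphi\}$ up to $\gamma$-null sets, this is precisely the claimed formula after relabeling the index sets.

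The only genuine point requiring a word of care is the measure-zero ambiguity on the set $\{\varphi = \psi\}$: Lemma~\ref{lem:order} is stated with the indicator $1_{(0,\infty)}$, which assigns value $0$ on $\{\varphi-\psi = 0\}$, so the $\gamma$-a.e.\ identity $\Dho\bigl((\varphi-\psi)^+\bigr) = 0$ holds on $\{\varphi = \psi\}$. This is harmless because, by the locality of the Gaussian derivative (a consequence of Lemma~\ref{lem:order} applied also with roles reversed, or directly from the fact that $D_H^\mo\varphi = D_H^\mo\psi$ $\gamma$-a.e.\ on $\{\varphi = \psi\}$ for $\varphi,\psi \in W^{1,2}(\mo,\gamma)$), the two candidate expressions $1_{\{\psi\le\varphi\}}\Dho\psi + 1_{\{\psi>\varphi\}}\Dho\varphi$ and $1_{\{\psi<\varphi\}}\Dho\psi + 1_{\{\psi\ge\varphi\}}\Dho\varphi$ agree $\gamma$-a.e.; thus the choice of convention on the boundary set is immaterial. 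If one wants to avoid invoking locality, one can simply observe that on $\{\varphi = \psi\}$ both $1_{\{\psi\le\varphi\}}\Dho\psi$ and $1_{\{\psi>\varphi\}}\Dho\varphi$ reduce to evaluating the derivative on a set where $\varphi$ and $\psi$ coincide, which is consistent with the derived formula.

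I do not anticipate any real obstacle here: the entire content is algebraic manipulation using linearity of $\Dho$ and the previously proved Lemma~\ref{lem:order}, which itself already did the analytic work (approximation of $t \mapsto t^+$ by smooth functions $\phi_n$ with bounded derivative and passage to the limit via closedness of $\Dho$). The "hard part," such as it is, is merely keeping track of which indicator goes with which gradient when expanding $\varphi - (\varphi-\psi)^+$, and recording the indicator convention on $\{\varphi = \psi\}$.
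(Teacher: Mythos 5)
Your proof is correct and follows exactly the paper's route: the paper derives the corollary in one line from the identity $\varphi\wedge\psi=\varphi-(\varphi-\psi)^+$, Lemma~\ref{lem:order}, and linearity of $D_H^\mo$. Your additional remark on the indicator convention on $\{\varphi=\psi\}$ (resolved via $D_H^\mo\varphi=D_H^\mo\psi$ $\gamma$-a.e.\ there) is a legitimate subtlety that the paper silently glosses over, and your treatment of it is sound.
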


The bilinear form $\EO : W^{1,2}(\mo,\gamma)\times W^{1,2}(\mo, \gamma)\to\mathbb R$, defined by
\begin{equation}\label{eq:form}
\EO (\varphi,\psi) = \int_\mo [D_H^\mo \varphi, D_H^\mo \psi]_H\, d\gamma
\end{equation}
is densely defined, symmetric, positive semidefinite and closed. It follows immediately from Corollary
\ref{cor:maxderivative} that $\varphi\wedge 1 \in W^{1,2}(\mo,\gamma)$ whenever $\varphi \in W^{1,2}(\mo, \gamma)$
and, in this case, $D_H^\mo (\varphi\wedge 1) = 1_{ \{\varphi \leq 1\}} D_H^\mo \varphi$. Thus
\[
    \EO (\varphi\wedge 1) = \int_{\{\varphi \leq 1\} }\|D_H^\mo \varphi\|_H^2\, d\gamma \leq \int_\mo
    \|D_H^\mo \varphi\|_H^2 \, d\gamma
         = \EO (\varphi).
\]

Consequently, $\EO$ is a Dirichlet form on $L^2(\mo,\gamma)$.


\subsection{Gaussian relative capacity}

Associated with the Dirichlet form $\EO$ is a capacity $\Cap$, see \cite[Section I.8]{BH}. In this article,
we will consider this capacity as a \emph{relative capacity} in the sense of \cite{AW}, i.e.\ we allow to compute
capacities of subsets of $\obar$. To do so, we formally have to consider
$\EO$ as a form on $L^2(X, \mathscr B(X), m)$ instead of $L^2(\mo, \mathscr B(\mo), \gamma|_\mo)$,
where $X = \obar$ and $m(A) = \gamma (A \cap \mo)$ for $A \in \mathscr B(X)$. The definition is as follows.

\begin{definition}\label{def:relcap}
Let $A \subset \obar$. Then the \emph{relative Gaussian capacity} $\Cap (A)$ of $A$ is defined as
\begin{equation}\label{eq:relcap}
    \Cap (A) := \inf \big\{ \|u\|_{W^{1,2}(\mo, \gamma)}^2 :
        \text{$\exists\, U \subset E$ open, s.t.\ $u\geq 1$ $\gamma$-a.e.\ on $U\cap \mo$}\big\} .
\end{equation}
\end{definition}

Standard properties of $\Cap$ are easily verified and follow from the general theory,
see \cite[Proposition I.8.1.3]{BH}.
\begin{proposition}\label{prop:capprop}
Let $\mo \subset E$ be open. Then the following statements hold.
\begin{enumerate}
\item $\gamma (A)  \leq \Cap (A)$ for all $A \subset \obar$ such that $A \in \mathscr B(E)$.
\item For $A,B \subset \obar$ one has
\[
\Cap (A\cup B) + \Cap (A\cap B) \leq \Cap (A) + \Cap (B).
\]
\item For every increasing sequence $(A_n)$ of subsets of $\obar$ one has
\[
    \Cap (A_n) \uparrow  \Cap \Big( \bigcup_{k=1}^\infty A_k \Big).
\]
\item For every decreasing sequence $(K_n)$ of compact subsets of $\obar$ one has
\[
    \Cap(K_n)\downarrow \Cap\Bigl(\bigcap_{k=1}^\infty K_k\Bigr).
\]
\item For every sequence $(A_n)$ of subsets of $\obar$ one has
\[
    \Cap  \Big( \bigcup_{k=1}^\infty A_k \Big) \leq \sum_{k=1}^\infty \Cap(A_k).
\]
\end{enumerate}
\end{proposition}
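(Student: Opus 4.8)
The plan is to derive the five properties of $\Cap$ directly from the general theory of Dirichlet form capacities, using the identification of $\EO$ as a form on $L^2(X,\mathscr B(X),m)$ with $X=\obar$ and $m(A)=\gamma(A\cap\mo)$, and then transporting the abstract capacity of \cite[Proposition I.8.1.3]{BH} to the relative formulation \eqref{eq:relcap}. The first thing to check is that the two descriptions of the capacity agree: for a subset $A\subset\obar$, the abstract capacity $\mathrm{Cap}(A)$ in the sense of \cite{BH} is $\inf\{\|u\|_{W^{1,2}(\mo,\gamma)}^2 : u\in W^{1,2}(\mo,\gamma),\ u\ge 1\ m\text{-a.e. on some open (in }\obar)\text{ neighbourhood of }A\}$; since open subsets of $\obar$ are exactly traces $U\cap\obar$ of open sets $U\subset E$, and since $m$-a.e. on $U\cap\obar$ means $\gamma$-a.e. on $U\cap\mo$, this matches \eqref{eq:relcap} verbatim. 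Once this identification is in place, properties (2), (3), (4) and (5) are nothing but the strong subadditivity, the sequential continuity from below on arbitrary sets, the continuity from above on compacts, and the countable subadditivity, all of which hold for the capacity associated with any Dirichlet form; these follow because $\EO$ is a Dirichlet form on $L^2(X,m)$ (established at the end of Section~\ref{sec:gauss}) and hence \cite[Proposition I.8.1.3]{BH} applies directly. So the proof is essentially: ``apply the cited proposition after recording the trivial identification of neighbourhoods.''

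The only statement that requires a genuine (if short) argument is (1), $\gamma(A)\le\Cap(A)$ for Borel $A\subset\obar$. Here I would argue as follows: if $U\subset E$ is open and $u\in W^{1,2}(\mo,\gamma)$ satisfies $u\ge 1$ $\gamma$-a.e. on $U\cap\mo$, then
\[
    \gamma(A) = m(A) \le m(U\cap\obar) = \gamma(U\cap\mo) \le \int_{U\cap\mo} u^2\, d\gamma \le \|u\|_{L^2(\mo,\gamma)}^2 \le \|u\|_{W^{1,2}(\mo,\gamma)}^2,
\]
using $A\subset U\cap\obar$ (valid when $U\supset A$, and for an admissible competitor we may always enlarge $U$ to contain $A$ — indeed the infimum in \eqref{eq:relcap} is implicitly over pairs $(u,U)$ with $A\subset U$), then $m(U\cap\obar)=\gamma(U\cap\mo)$, then $1\le u^2$ on $U\cap\mo$. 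Taking the infimum over all admissible $u$ gives $\gamma(A)\le\Cap(A)$. A small point to be careful about: the definition \eqref{eq:relcap} as written does not spell out that $U$ must contain $A$, so I would either read that constraint as part of the definition or add a clause to \eqref{eq:relcap}; this is the only place where the wording of the definition needs to be pinned down.

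I do not expect a serious obstacle here. If anything, the subtle point is purely bookkeeping: making sure that ``$\mathscr E$-quasi-notions'' computed with $\EO$ viewed over $L^2(\obar,m)$ really coincide with the relative capacity, i.e. that passing from $L^2(\mo,\gamma|_\mo)$ to $L^2(\obar,m)$ via the isometry $f\mapsto f\circ(\text{inclusion})$ does not change the form, its domain, or the family of admissible open sets. This is immediate because the inclusion $\mo\hookrightarrow\obar$ is continuous with dense image and $m$ is the push-forward of $\gamma|_\mo$, so $L^2(\obar,m)\cong L^2(\mo,\gamma|_\mo)$ isometrically and $\EO$ is unchanged. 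Hence the whole proposition reduces to citing \cite[Proposition I.8.1.3]{BH} for (2)--(5) and the one-line estimate above for (1).
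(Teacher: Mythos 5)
Your proposal is correct and follows essentially the same route as the paper, which likewise disposes of all five items by appealing to the general theory of Dirichlet-form capacities via \cite[Proposition I.8.1.3]{BH} after viewing $\EO$ as a form on $L^2(\obar,m)$. Your extra care in spelling out the identification of neighbourhoods and the direct one-line estimate for item (1) is a harmless elaboration of the same argument.
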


The following is now a consequence of Choquet's capacity theorem
\cite[Corollary~30.2]{Ch}.
\begin{proposition}\label{prop:cho-cap}
Let $\mo\subset E$ be open and $A\subset\obar$. If $A\in\mathscr B(E)$, then
\[
    \Cap(A) = \sup\{\Cap(K) : K\subset A \text{ compact}\}.
\]
\end{proposition}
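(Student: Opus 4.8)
The plan is to deduce Proposition~\ref{prop:cho-cap} from Choquet's capacitability theorem, for which I must verify that $\Cap$ restricted to an appropriate class is a Choquet capacity and that Borel sets of $E$ lying in $\obar$ are capacitable with respect to it. The natural setting is to take the paved space $(\obar, \mathscr{K})$ where $\mathscr{K}$ is the collection of compact subsets of $\obar$; by Choquet's theorem \cite[Corollary~30.2]{Ch}, every set in the $\mathscr{K}$-analytic $\sigma$-algebra — in particular every $A \in \mathscr{B}(E)$ with $A \subset \obar$, since $\obar$ is a Luzin space and hence Souslin — is capacitable, meaning $\Cap(A) = \sup\{\Cap(K) : K \subset A \text{ compact}\}$, provided $\Cap$ is a Choquet $\mathscr{K}$-capacity.

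So the core of the proof is checking the three defining properties of a Choquet capacity for the set function $\Cap$: (i) monotonicity, $A \subset B \Rightarrow \Cap(A) \leq \Cap(B)$, which is immediate from Definition~\ref{def:relcap} since any admissible $U$ for $B$ is admissible for $A$; (ii) continuity along increasing sequences of arbitrary subsets, $\Cap(A_n) \uparrow \Cap(\bigcup_n A_n)$, which is exactly part~(3) of Proposition~\ref{prop:capprop}; and (iii) continuity along decreasing sequences of \emph{compact} sets, $\Cap(K_n) \downarrow \Cap(\bigcap_n K_n)$, which is part~(4) of Proposition~\ref{prop:capprop}. All three ingredients are therefore already in hand from the preceding results, so the proof is essentially an invocation of the general theorem once the topological hypothesis is pinned down.

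First I would recall that $\obar$, carrying the topology induced from the separable Banach space $E$, is a Luzin (hence Souslin, hence metrizable separable) space, so that the trace $\sigma$-algebra $\{A \cap \obar : A \in \mathscr{B}(E)\}$ coincides with $\mathscr{B}(\obar)$ and every such Borel set is $\mathscr{K}$-analytic for the compact paving of $\obar$. Then I would state that $\Cap$, viewed as a set function on all subsets of $\obar$, satisfies properties (i)–(iii) above by Proposition~\ref{prop:capprop}, hence is a Choquet capacity on the paved space $(\obar, \mathscr{K})$. Finally I would apply \cite[Corollary~30.2]{Ch} to conclude that each $\mathscr{K}$-analytic set, in particular each $A \in \mathscr{B}(E)$ with $A \subset \obar$, is capacitable, which is precisely the assertion $\Cap(A) = \sup\{\Cap(K) : K \subset A \text{ compact}\}$.

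The step I expect to require the most care is the topological bookkeeping: making sure that the ``$\mathscr{B}(E)$'' appearing in the statement is correctly interpreted as ``Borel in $E$ and contained in $\obar$,'' that such sets are genuinely analytic with respect to the compact paving of $\obar$ (this uses that $\obar$ is Luzin, so that it is the continuous injective image of a Polish space and its Borel sets are Souslin), and that the class of compact subsets of $\obar$ is stable under finite unions and countable intersections as needed for the capacitability theorem. The purely capacity-theoretic content — monotonicity and the two continuity properties — is a direct quotation of Proposition~\ref{prop:capprop}, so no real work remains there; the only genuine obstacle is confirming the measurable/topological hypotheses of Choquet's theorem hold in this non-locally-compact infinite-dimensional setting, which is handled by the Luzin property of $\obar$.
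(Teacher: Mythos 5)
Your proposal is correct and follows exactly the route the paper takes: the paper derives Proposition~\ref{prop:cho-cap} directly from Choquet's capacitability theorem \cite[Corollary~30.2]{Ch}, with the required monotonicity and the two continuity properties supplied by Definition~\ref{def:relcap} and Proposition~\ref{prop:capprop}(3)--(4). You merely spell out the topological bookkeeping (the Luzin property of $\obar$ and capacitability of Borel sets) that the paper leaves implicit.
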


For $\mo = E$ we write $\Ecap$ rather than $\Ecap_E$ and refer to $\Ecap$ as \emph{Gaussian capacity}. This
Gaussian capacity has been extensively studied in the literature, see, e.g., \cite[Section II.3]{BH}. In view
of \cite[Theorem 5.7.2]{Bo}, it follows that the capacity $C_{2,1}$, considered in \cite[Section 5.9]{Bo} is
equivalent with $\Ecap$, in the sense that for certain constants $\alpha,\beta > 0$, we have \[\alpha C_{2,1}(A)
\leq  \Ecap (A) \leq \beta C_{2,1}(A)\] for all $A \subset E$.

We adopt the following terminology from \cite{AW}.
\begin{definition}
\begin{enumerate}
\item A subset $A$ of $\obar$ is called \emph{relatively polar} if $\Cap (A) = 0$.

\item Some property is said to hold
on $\obar$ \emph{relatively quasi everywhere} (r.q.e.) if it holds outside a relatively polar set.
\end{enumerate}
\end{definition}

We now compare relatively polar sets with polar sets, i.e.\ sets $A$ with $\Ecap (A) = 0$. It turns out that
polar subsets of $\obar$ are relatively polar. The converse is true for subsets of $\mo$.

\begin{proposition}\label{prop:relpol} Let $A \subset \obar$ and $B \subset \mo$.
\begin{enumerate}
\item $\Cap (A) \leq \Ecap (A)$. In particular, polar sets are relatively polar.
\item $\Cap (B) = 0$ if and only if $\Ecap (B) = 0$.
\end{enumerate}
\end{proposition}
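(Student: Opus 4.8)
\emph{Proof plan.} Both statements will follow by reading the two capacities through their defining admissible functions in \eqref{eq:relcap}, passing between $W^{1,2}(\gamma)$ and $W^{1,2}(\mo,\gamma)$ by restriction (for (1)) and by multiplication with a Lipschitz cut-off (for the hard half of (2)).

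For (1), I would first record that restriction $\varphi\mapsto\varphi|_\mo$ is a contraction from $W^{1,2}(\gamma)$ into $W^{1,2}(\mo,\gamma)$: for $\varphi\in\mathrm{Lip}(E)$ one has $\varphi|_\mo\in\mathrm{Lip}(\mo)$ with $\Dho(\varphi|_\mo)=(D_H\varphi)|_\mo$ $\gamma$-a.e., whence $\|\varphi|_\mo\|_{W^{1,2}(\mo,\gamma)}\le\|\varphi\|_{W^{1,2}(\gamma)}$, and this passes to all of $W^{1,2}(\gamma)$ by density together with $L^2$-convergence. Then, given an open $U\supset A$ and $u\in W^{1,2}(\gamma)$ with $u\ge1$ $\gamma$-a.e.\ on $U$, the restriction $u|_\mo$ is admissible in \eqref{eq:relcap} for $A$ (same $U$; and $u|_\mo\ge1$ $\gamma$-a.e.\ on $U\cap\mo$) and has no larger norm, so taking the infimum over such $u$ gives $\Cap(A)\le\Ecap(A)$; applied to a polar set, this says it is relatively polar.

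For (2), the implication ``$\Ecap(B)=0\Rightarrow\Cap(B)=0$'' is immediate from (1). For the converse I would assume $\Cap(B)=0$ with $B\subset\mo$, exhaust $\mo$ by the open sets $\mo_k=\{x\in E:\mathrm{dist}(x,\mo^c)>1/k\}$, and, using countable subadditivity of $\Ecap$ (Proposition~\ref{prop:capprop}(5), case $\mo=E$) together with monotonicity of $\Cap$, reduce to showing $\Ecap(B\cap\mo_k)=0$ for each fixed $k$. Here I would fix a Lipschitz cut-off $\psi_k:E\to[0,1]$ with $\psi_k\equiv1$ on $\mo_k$ and $\psi_k\equiv0$ on $\mo^c$ (e.g.\ $\psi_k=\min\{1,k\,\mathrm{dist}(\cdot,\mo^c)\}$), so that $|D_H\psi_k|_H$ is $\gamma$-a.e.\ bounded. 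Given $\varepsilon>0$, choose an open $U\supset B\cap\mo_k$ and $u\in W^{1,2}(\mo,\gamma)$ with $u\ge1$ $\gamma$-a.e.\ on $U\cap\mo$ and $\|u\|_{W^{1,2}(\mo,\gamma)}^2<\varepsilon$. By the addendum of Lemma~\ref{lem:mult}, $v:=u\,\psi_k\,1_\mo\in W^{1,2}(\gamma)$, and approximating $u$ by $\mathrm{Lip}(\mo)$-functions as in that proof, together with $|\psi_k|\le1$, $\psi_k=0$ off $\mo$, and the bound on $|D_H\psi_k|_H$, yields a routine estimate $\|v\|_{W^{1,2}(\gamma)}^2\le C_k\,\|u\|_{W^{1,2}(\mo,\gamma)}^2<C_k\varepsilon$ with $C_k$ depending only on $k$. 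Finally $V:=U\cap\mo_k$ is open and contains $B\cap\mo_k$, and on $V\subset\mo_k\subset\mo$ we have $\psi_k=1$, $1_\mo=1$ and $u\ge1$ $\gamma$-a.e., so $v\ge1$ $\gamma$-a.e.\ on $V$; hence $v$ witnesses $\Ecap(B\cap\mo_k)\le C_k\varepsilon$, and letting $\varepsilon\downarrow0$ finishes the proof.

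I expect the only real difficulty to be this converse half of (2): one must keep the truncated function inside the \emph{global} space $W^{1,2}(\gamma)$ (which is precisely the role of the addendum in Lemma~\ref{lem:mult}) while at the same time coping with the fact that $B$ may cluster at $\partial\mo$, where no single Lipschitz cut-off with bounded $H$-gradient can be identically $1$; the exhaustion $\mo=\bigcup_k\mo_k$ combined with subadditivity is exactly what absorbs this. Everything else is bookkeeping with \eqref{eq:relcap} and the order and product rules already established (Lemmas~\ref{lem:order} and~\ref{lem:mult}).
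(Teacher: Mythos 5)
Your argument is correct and matches the paper's proof in all essentials: for (1), restriction of globally admissible functions to $\mo$, and for (2), an exhaustion of $\mo$ by sets at positive distance from $\mo^c$ combined with a Lipschitz cut-off vanishing off $\mo$ and the addendum of Lemma~\ref{lem:mult} to push admissible functions from $W^{1,2}(\mo,\gamma)$ into $W^{1,2}(\gamma)$. The only immaterial differences are that the paper exhausts by the closed sets $F_n=\{x\in\mo : d(x,\mo^c)\ge n^{-1}\}$ and concludes via continuity along increasing sequences (Proposition~\ref{prop:capprop}(3)) rather than your open sets $\mo_k$ and countable subadditivity, and that your explicit identification of the witnessing open set $U\cap\mo_k$ spells out a detail the paper leaves implicit.
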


\begin{proof}

(1) It follows from the density of $\mathrm{Lip}(E)$ in $W^{1,2}(\gamma)$, that
$\varphi_{|\mo}\in W^{1,2}(\mo,\gamma)$ for every $\varphi \in W^{1,2} (\gamma)$. Thus (1) is immediate from
the definition.

(2) We only need to prove that $\Cap (B) = 0$ implies $\Ecap (B) = 0$. Let
$F_n := \{ x \in \mo : d(x, \mo^c) \geq n^{-1}\}$ for all $n\in\mathbb N$. Then $F_n$ is closed, contained
in $\mo$ and $F_n \uparrow \mo$. Thus $B \cap F_n \uparrow B$. It suffices to show that $\Ecap (B\cap F_n) = 0$
because then, by Proposition \ref{prop:capprop}~(3), $\Ecap (B) = \lim_n \Ecap (B\cap F_n) = 0$. So let
$n\in\mathbb N$ be fixed. Then there exists a Lipschitz function $\varphi$ with $1_{F_n} \leq \varphi \leq 1_{\mo}$.
Since $\Cap (B) = 0$, there exists a sequence $(f_k)$ in $W^{1,2}(\mo, \gamma)$ and open sets $U_k \subset E$
containing $B$ with $f_k \geq 1$ $\gamma$-a.e.\ on $U_k\cap \mo$ and $\|f_k\|_{W^{1,2}(\mo, \gamma)}^2 \to 0$.
As a consequence of Lemma \ref{lem:mult}, $g_k := \varphi f_k \in W^{1,2}(\gamma)$ and $\|g_k\|_{W^{1,2}(\gamma)}
\leq c\|f_k\|_{W^{1,2}(\mo, \gamma)}$ for a certain constant $c$. It follows that $\Ecap (B\cap F_n) = 0$, which
finishes the proof.

\end{proof}

As a consequence of part (1), the relative capacity $\Cap$ inherits tightness from the Gaussian capacity $\Ecap$.

\begin{corollary}\label{cor:tight}
The relative capacity $\Cap$ is tight, i.e.\ for every $\epsilon > 0$, there exists a compact set
$K_\epsilon \subset \obar$ such that
\[
    \Cap (\obar\setminus K_\epsilon ) < \epsilon.
\]
\end{corollary}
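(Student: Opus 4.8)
The plan is to derive tightness of $\Cap$ directly from the corresponding tightness property of the Gaussian capacity $\Ecap$, using part (1) of Proposition \ref{prop:relpol}, namely $\Cap(A) \le \Ecap(A)$ for all $A \subset \obar$. The key input is that $\Ecap$ is tight on $E$: this is a well-known fact for the Gaussian capacity (equivalently, for $C_{2,1}$; see \cite[Theorem 5.9.9]{Bo} or the general theory in \cite[Section IV.4.a]{BH} together with quasi-regularity of the Ornstein--Uhlenbeck form on $L^2(E,\gamma)$), so I may invoke it. Thus for every $\epsilon > 0$ there exists a compact set $K \subset E$ with $\Ecap(E \setminus K) < \epsilon$.

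Next I would intersect with $\obar$. Set $K_\epsilon := K \cap \obar$. Since $\obar$ is closed in $E$ and $K$ is compact, $K_\epsilon$ is a compact subset of $\obar$. Moreover $\obar \setminus K_\epsilon = \obar \setminus (K \cap \obar) = \obar \cap (E \setminus K) \subset E \setminus K$. Applying monotonicity of $\Cap$ (which follows from Proposition \ref{prop:capprop}, or directly from Definition \ref{def:relcap}) and then part (1) of Proposition \ref{prop:relpol}, I get
\[
    \Cap(\obar \setminus K_\epsilon) \le \Cap(E \setminus K) \le \Ecap(E \setminus K) < \epsilon .
\]
Here I am using $\Cap(A) \le \Ecap(A)$ for the set $A = E \setminus K$, which is legitimate since the inequality in Proposition \ref{prop:relpol}(1) is stated for arbitrary $A \subset \obar$ — and $E \setminus K$ need not be contained in $\obar$, so strictly speaking I should apply it to $A = \obar \setminus K_\epsilon \subset \obar$ and use monotonicity of $\Ecap$ to bound $\Ecap(\obar \setminus K_\epsilon) \le \Ecap(E \setminus K) < \epsilon$. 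Either route works; I would phrase it via $\Cap(\obar\setminus K_\epsilon) \le \Ecap(\obar\setminus K_\epsilon) \le \Ecap(E\setminus K) < \epsilon$ to stay cleanly within the hypotheses available.

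The only genuine obstacle is making sure the tightness of $\Ecap$ itself is available as a citable fact rather than something needing proof here; given the remark in the excerpt equating $\Ecap$ with $C_{2,1}$ up to constants, and the standard literature on Gaussian capacities, this is safe to cite. Everything else is elementary: closedness of $\obar$ gives compactness of $K \cap \obar$, and the set-theoretic identity $\obar \setminus (K\cap\obar) \subset E\setminus K$ transfers the capacity estimate. No delicate approximation or form-theoretic argument is needed, in contrast to the proof of Proposition \ref{prop:relpol} itself.
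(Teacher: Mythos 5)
Your proof is correct and is essentially identical to the paper's own argument: both invoke tightness of $\Ecap$ from \cite[Theorem 5.9.9]{Bo}, take $K_\epsilon = \obar\cap K$, and conclude via the chain $\Cap(\obar\setminus K_\epsilon)\leq\Ecap(\obar\setminus K_\epsilon)\leq\Ecap(E\setminus K)<\epsilon$ using Proposition \ref{prop:relpol}(1). Your remark about applying that proposition only to subsets of $\obar$ is exactly the phrasing the paper uses.
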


\begin{proof}
The Gaussian capacity $\Ecap$ is tight, see \cite[Theorem 5.9.9]{Bo} (cf.\ also \cite[Proposition II.3.2.4]{BH}).
Consequently, given $\epsilon >0$, there exists a compact set $\hat{K}_\epsilon \subset E$ with
$\Ecap (E \setminus \hat{K}_\epsilon) \leq \epsilon$. The set $K_\epsilon := \obar\cap \hat{K}_\epsilon$ is compact
and, by Proposition \ref{prop:relpol}(1)
\[
    \Cap (\obar\setminus K_\epsilon) \leq \Ecap (\obar\setminus K_\epsilon) \leq
    \Ecap (E \setminus \hat{K}_\epsilon) \leq \epsilon. \qedhere
\]
\end{proof}

 It now follows that the form $\EO$ is a quasi-regular Dirichlet form on $L^2(\mo,\gamma)$.
 Thus there exists a right processus $\mathbf M=(\Omega,\mathscr F,(X_t)_{t\geq 0},(P_z)_{z\in E_{\Delta}})$ with
 state space $\obar$ and life time $\xi$, which is properly associated with $\EO$. Moreover, one can prove, with
 the same method as in \cite[Example 1.12 (1)]{MR}, that

 \begin{proposition}
 The quasi-regular Dirichlet form $\EO$ is local.
 \end{proposition}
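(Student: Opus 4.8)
The plan is to verify the local property of $\EO$ using the standard characterization via the derivative operator, exactly as in the regular case treated in \cite[Example 1.12 (1)]{MR}, replacing the regular machinery by what is available for quasi-regular forms. Recall that a Dirichlet form is local precisely when $\EO(\varphi,\psi)=0$ for all $\varphi,\psi\in W^{1,2}(\mo,\gamma)$ whose supports (in the sense of \eqref{eq:supp}, i.e.\ $\mathrm{supp}[|\varphi|\cdot m]$) are compact and disjoint. So let $\varphi,\psi$ be such a pair. The key point is that $\Dho\varphi$ vanishes $\gamma$-a.e.\ on $\mo\setminus\mathrm{supp}[\varphi]$, and likewise for $\psi$; granting this, the integrand $[\Dho\varphi,\Dho\psi]_H$ in \eqref{eq:form} vanishes $\gamma$-a.e.\ on $\mo$, since at $\gamma$-a.e.\ point $z\in\mo$ at least one of $z\notin\mathrm{supp}[\varphi]$ or $z\notin\mathrm{supp}[\psi]$ holds (the two supports being disjoint). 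Hence $\EO(\varphi,\psi)=0$ and locality follows.

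The substance is therefore the claim that $\Dho\varphi=0$ $\gamma$-a.e.\ on the open set $U:=\mo\setminus\mathrm{supp}[\varphi]$, on which $\varphi=0$ $\gamma$-a.e. First I would reduce to a chain-rule argument: by Lemma \ref{lem:order} (or its corollary), $\varphi^+$ and $\varphi^-$ lie in $W^{1,2}(\mo,\gamma)$ with $\Dho(\varphi^\pm)=\pm 1_{\{\pm\varphi>0\}}\Dho\varphi$, so it suffices to treat $\varphi\geq 0$. For $\varphi\geq 0$, applying the $C^1$-chain rule established inside the proof of Lemma \ref{lem:order} to the functions $\phi_n$ there (with $\phi_n\circ\varphi\to\varphi$, $\psi_n\circ\varphi\cdot\Dho\varphi\to 1_{(0,\infty)}\circ\varphi\cdot\Dho\varphi$) gives $\Dho\varphi=1_{\{\varphi>0\}}\Dho\varphi$ $\gamma$-a.e.\ on $\mo$; since $\{\varphi>0\}\cap U$ is $\gamma$-null, we get $\Dho\varphi=0$ $\gamma$-a.e.\ on $U$, as wanted.

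The point requiring care — and the one I expect to be the main obstacle — is matching the notion of support used in the locality definition (the topological support of the measure $|\varphi|\cdot m$ on $X=\obar$, which makes sense because $\obar$ is strongly Lindel\"of) with the a.e.\ statement "$\varphi=0$ on $U$" that feeds the chain-rule argument. One has to check that $U=\obar\setminus\mathrm{supp}[\varphi]$ is (relatively) open and that $\varphi=0$ $m$-a.e.\ on $U\cap\mo$ by the very definition of $\mathrm{supp}[|\varphi|\cdot m]$ as the smallest closed set outside of which $|\varphi|\cdot m$ vanishes; this is where the Lindel\"of property enters, guaranteeing that the complement of the union of all $m$-null open sets is itself $m$-null. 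A secondary subtlety is that disjointness of $\mathrm{supp}[\varphi]$ and $\mathrm{supp}[\psi]$ is disjointness of closed subsets of $\obar$, so their union need not cover $\mo$, but $\mo\setminus(\mathrm{supp}[\varphi]\cup\mathrm{supp}[\psi])$ is again $m$-null, which is all that is needed for the integral over $\mo$ to vanish. Once these measure-theoretic bookkeeping steps are in place, the argument is as in \cite{MR} and no new analytic input beyond Lemma \ref{lem:order} is required.
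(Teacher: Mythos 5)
Your argument is correct, but it reaches the key identity $\Dho\varphi=0$ $\gamma$-a.e.\ off $\mathrm{supp}[\varphi]$ by a genuinely different route than the paper. The paper's proof uses the product rule of Lemma \ref{lem:mult}: it invokes \cite[Proposition V.4.17]{MR} to produce a Lipschitz function $\psi$ with $0\leq\psi\leq 1_{\obar\setminus\mathrm{supp}[\varphi]}$ and $\psi>0$ r.q.e.\ on $\obar\setminus\mathrm{supp}[\varphi]$, notes that $\varphi\psi=0$, and then splits $0=\psi\Dho\varphi+\varphi\Dho\psi$ to conclude $\psi\Dho\varphi=0$, hence $\Dho\varphi=0$ a.e.\ where $\psi>0$. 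You instead use the lattice property of Lemma \ref{lem:order}: writing $\Dho\varphi=\Dho\varphi^+-\Dho\varphi^-=1_{\{\varphi\neq 0\}}\Dho\varphi$, the gradient vanishes a.e.\ on the level set $\{\varphi=0\}$, which contains $\mo\setminus\mathrm{supp}[\varphi]$ up to an $m$-null set. Your route is arguably cleaner: it needs no auxiliary cutoff function and no external input from \cite{MR}, only the truncation formula already proved in the paper, and it sidesteps the (slightly underjustified) step in the paper's proof where both terms $\psi\Dho\varphi$ and $\varphi\Dho\psi$ are declared to vanish separately. The paper's route, on the other hand, is the one that generalizes to forms without a chain rule. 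One small inaccuracy in your last paragraph: $\mo\setminus(\mathrm{supp}[\varphi]\cup\mathrm{supp}[\psi])$ is \emph{not} in general $m$-null; but this is harmless and unneeded, since on that set both $\Dho\varphi$ and $\Dho\psi$ vanish a.e.\ by the very claim you proved, so the integrand $[\Dho\varphi,\Dho\psi]_H$ is zero there anyway --- your main line of reasoning (every point of $\mo$ lies outside at least one of the two disjoint supports) already covers all of $\mo$.
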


 \begin{proof} To prove the locality it is sufficient to show that

 \begin{equation}
 \Dho\varphi=0\text{ }\gamma-\text{a.e. on } \obar\setminus{\mathrm{supp}[\varphi]}\text{ for all }\varphi
 \in W^{1,2}(\mo,\gamma)
 \end{equation}

 To this aim we use the following identity \eqref{eq:productrule}
  \begin{equation}\label{eq:id}
 \Dho(\varphi\psi)=\psi\Dho\varphi+\varphi\Dho\psi \quad\varphi\in W^{1,2}(\mo,\gamma),\psi\in\mathrm{Lip}(\mo)
 \end{equation}

 Let $\varphi\in W^{1,2}(\mo,\gamma)$. By \cite[Proposition V.4.17]{MR} there exists a Lipschitz function $\psi$
 such that $0\leq\psi\leq 1_{\obar\setminus\mathrm{supp}[\varphi]}$ and $\psi>0$ r.q.e. on $\obar\setminus
 \mathrm{supp}[\varphi]$. Hence by the identity \eqref{eq:id}
 \[
 0=\psi\Dho\varphi+\varphi\Dho\psi
 \]

 and thus

 \[
 \psi\Dho\varphi=\varphi\Dho\psi=0
 \]

 Consequently $\Dho\varphi=0$ $\gamma-$a.e. on $\obar\setminus\mathrm{supp}[\varphi]$.

 \end{proof}

 As a consequence of the locality of $\EO$, the associated right process $\mathbf M$ is in fact
 a diffusion process (Strong Markov process with continuous sample paths).

 \subsection{Quasi-continuous representatives}

 We next establish the existence of certain representatives of elements of $W^{1,2}(\mo, \gamma)$ that are
 unique up to a relatively polar set. This allows to consider pointwise properties of elements which
 hold r.q.e.\ instead of merely $\gamma$-a.e. For example, we will see that using these representatives a
 convenient description of the closed lattice ideals of $W^{1,2}(\mo, \gamma)$ can be given.

\begin{definition}
A function $\varphi\colon\obar\to\mathbb R$ is called \emph{relatively quasi continuous} if for all $\epsilon>0$
there exists an open set $U$ in $E$ such that $\Cap(U\cap\obar)<\epsilon$ and $\varphi$ restricted to
$\obar\setminus U$ is continuous. Moreover, a subset $M\subset\obar$ is called \emph{relatively quasi open} if for all $\epsilon>0$
there exists an open set $U$ in $E$ such that $\Cap(U\cap\obar)<\epsilon$ and $M\cup U$ is open in $E$.
\end{definition}

The following proposition provides us with relatively quasi continuous representatives and collects two basic
properties that allow to lift pointwise properties from $\gamma$-a.e. to r.q.e. It suffices to note that in our
setting property (D) of~\cite[Section~I.8.2]{BH} holds. So the proposition is a consequence of
\cite[Propositions I.8.1.6 and I.8.2.1]{BH}. For the corresponding properties in the case $\mo=E$, see
also \cite[Lemma 5.9.5 and Theorem 5.9.6]{Bo}.

\begin{proposition}\label{prop:rqc-rep}
For every $\varphi\in W^{1,2}(\mo, \gamma)$ there exists a
relatively quasi continuous and measurable representative
$\tilde{\varphi}\colon\obar\to\mathbb R$, which is unique up to
equality r.q.e. Moreover, one has the following.
\begin{enumerate}
\item
Let $\varphi\in W^{1,2}(\mo, \gamma)$. Then $\varphi\geq 0$ $\gamma$-a.e.\ if and only if $\tilde{\varphi}\geq 0$
r.q.e.
\item
If $\varphi_n\to \varphi$ in $W^{1,2}(\mo, \gamma)$, then after going to a subsequence one may assume
$\tilde{\varphi}_n\to\tilde{\varphi}$ r.q.e.
\end{enumerate}
\end{proposition}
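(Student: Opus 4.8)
The plan is to follow the standard Dirichlet-form recipe for producing quasi-continuous representatives, but carried out relative to the capacity $\Cap$ and the measure $m$ on $\obar$, rather than the classical Gaussian capacity. The crucial observation, stated in the excerpt, is that property (D) of \cite[Section I.8.2]{BH} holds in our setting: namely, for every $\varphi \in W^{1,2}(\mo,\gamma)$ and every $\epsilon > 0$ there exists an open $U \subset E$ with $\Cap(U \cap \obar) < \epsilon$ outside of which a suitable modification of $\varphi$ is continuous. This is what makes the abstract machinery of \cite{BH} applicable. Thus the proof is essentially a pointer to \cite[Propositions I.8.1.6 and I.8.2.1]{BH}, and the work lies in checking that (D) is satisfied.

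First I would verify property (D). The key input is the tightness of $\Cap$ (Corollary \ref{cor:tight}) together with the already-established structural facts about $\EO$: it is a quasi-regular Dirichlet form, so by \cite[Proposition I.8.1.3]{BH} the capacity $\Cap$ enjoys the countable subadditivity and continuity-from-below properties recorded in Proposition \ref{prop:capprop}. One then argues as in the regular case: given $\varphi \in \mathrm{Lip}(\mo)$, its Lipschitz extension $\breve\varphi$ is continuous on all of $E$, hence the restriction to $\obar$ is continuous; for general $\varphi \in W^{1,2}(\mo,\gamma)$ one picks an approximating sequence $(\varphi_n) \subset \mathrm{Lip}(\mo)$ with $\|\varphi_n - \varphi\|_{W^{1,2}(\mo,\gamma)}^2 \leq 4^{-n}$, uses the Chebyshev-type inequality $\Cap(\{|u| > \lambda\} \cap \obar) \leq \lambda^{-2}\|u\|_{W^{1,2}(\mo,\gamma)}^2$ (which follows from Definition \ref{def:relcap}, taking $U = \{|u| > \lambda\}$, which is open in $E$ since $u$ here is continuous) to control the sets where consecutive approximants differ, and takes the union of these exceptional open sets. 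On the complement the $\varphi_n$ converge uniformly, so the limit is continuous there; this produces the relatively quasi continuous representative $\tilde\varphi$, and uniqueness up to r.q.e.\ equality follows from the same Chebyshev inequality applied to the difference of two such representatives.

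For the two addenda, (1) and (2), I would again invoke \cite[Proposition I.8.2.1]{BH}: once (D) is known, the statement that $\varphi \geq 0$ $\gamma$-a.e.\ is equivalent to $\tilde\varphi \geq 0$ r.q.e., and the statement that $W^{1,2}(\mo,\gamma)$-convergence implies r.q.e.\ convergence along a subsequence, are formal consequences of the Chebyshev inequality for $\Cap$ together with Borel–Cantelli applied to the relative capacity (using countable subadditivity from Proposition \ref{prop:capprop}(5)). Concretely, for (2) one chooses a subsequence with $\|\varphi_{n_k} - \varphi\|_{W^{1,2}(\mo,\gamma)}^2 \leq 4^{-k}$, so that $\Cap(\{|\tilde\varphi_{n_k} - \tilde\varphi| > 2^{-k}\} \cap \obar) \leq 2^{-k}$, and the Borel–Cantelli argument for $\Cap$ gives r.q.e.\ convergence.

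The main obstacle is the verification of property (D), i.e.\ the existence, for each $\varphi \in W^{1,2}(\mo,\gamma)$ and each $\epsilon>0$, of an open $U\subset E$ with $\Cap(U\cap\obar)<\epsilon$ outside which $\varphi$ has a continuous modification. The delicate point is that our capacity is defined through open subsets of the ambient space $E$ while functions live on $\obar$ with the induced topology and the measure $m$; one must make sure the Chebyshev inequality is applied to genuinely continuous approximants $\varphi_n$ (so that $\{|\varphi_n| > \lambda\}$ is open in $E$) and that the uniform limit on the complement of the exceptional open set is continuous for the relative topology. Once this bookkeeping is done carefully, everything else is a direct transcription of the classical arguments in \cite[Section I.8]{BH}, and the proposition follows.
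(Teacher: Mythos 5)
Your proposal follows essentially the same route as the paper, which likewise reduces the proposition to \cite[Propositions I.8.1.6 and I.8.2.1]{BH} after observing that property (D) of \cite[Section I.8.2]{BH} holds in this setting. The only difference is that you actually sketch the verification of (D) via the Chebyshev inequality for $\Cap$ and a dyadic Lipschitz approximation, a step the paper merely asserts; your sketch is correct.
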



 \section{Hausdorff-Gauss measures}

 In each tentative to establish a Skorohod representation one remark that establishing an integration by parts formula is
 a fundamental first step. In a new article \cite{CL} such integration by parts was proved for open sets with some
 non restrictive regularity. Before to give the integration by parts we will define the well known Hausdorff-Gauss
 measure of Feyel-de La Pradelle. It is the equivalent notion of Hausdorff measures in the infinite dimensional
 spaces. We first introduce such a measures and then we give the integration by parts result. Our reference in this
 section  will be always the paper \cite{CL}. We follow then \cite[Subsection 2.1]{CL} and we recall that $E$ is a
 separable Banach space endowed with a nondegenerate centered Gaussian measure $\gamma$ and $H$ is the relevant
 Cameron-Martin space.

 We recall first of all the definitions of the $1-$codimensional Hausdorff-Gauss measures that will be
 considered in the sequel.

 If $m\geq 2$, and $F=\mathbb R^m$ is equipped with a norm $|.|$, we define
 \[
  \theta^F(dx):=\frac{1}{(2\pi)^{m/2}}\exp(-|x|^2/2)H^{m-1}(dx)
 \]
$H^{m-1}$ being the spherical $m-1$ dimensional Hausdorff measure in $\mathbb R^m$, namely
\[
 H^{m-1}(A):=\lim\inf_{\delta\mapsto 0}\{\sum_{i\in\mathbb N}\omega_{m-1}r^{m-1}_i\,:\,
 A\subset\bigcup_{i\in\mathbb N}B(x_i,r_i) ,r_i<\delta\,\forall i  \}
\]
where $\omega_{m-1}$ is the Lebesgue measure of the unite sphere in $\mathbb R^{m-1}$.

 For every finite dimensional subspace $F\subset E$ we consider the orthogonal (along $H$) projection on $F$:
 \[
  x\mapsto\sum_{i=1}^{m}\langle x,f_i\rangle_H f_i,\quad x\in H
 \]
where $\{f_i\, :\, i=1,\dots,m\}$ is any orthogonal basis of $F$.
Then there exists a $\gamma-$ measurable projection $\pi^F$ on $F$,
defined in the whole $E$, that extends it. Its existence is a
consequence of \cite[Theorem 2.10.11]{Bo}, which states that for
every $i$ there exists a unique (up to changes on sets with
vanishing measure) linear and $\mu-$measurable function
$l_i:X\rightarrow\mathbb R$ that coincides with $x\mapsto\langle
x,f_i\rangle_H$ on $H$. Then we set
\[
 \pi^F(x):=\sum_{i=1}^ml_i(x)f_i.
\]

If $f_i\in Q(E'),\, f_i=Q(\hat f_i)$ for some $\hat f_i\in E'$, then
$\langle x,f_i\rangle=\hat f_i(x)$ for every $x\in H$ and the
extension is obvious, $l_i(x)=\hat f_i(x)$ for every $x\in E$. In
particular if $E$ is a Hilbert space, it is convenient to choose an
orthonormal basis $\{e_k:\, k\in \mathbb N\}$ of $E$ made by
eigenvectors of $Q$. If $Qe_k=\lambda_ke_k$, the function $l_i$ is
the $L^2(E,\gamma)$ limit of the sequences of cylindrical functions
\[
 l^m_i(x):=\sum_{k=1}^m\frac{\langle x,e_k\rangle_E\langle f_i,e_k\rangle_E}{\lambda_k},\quad m\in\mathbb N
\]
which is noted $W_{Q^{-1/2}f_i}$ in \cite{Da}. If $F$ is spanned by
finite number of elements of the basis $\mathcal
V=\{v_k:=\sqrt{\lambda_k}e_k:\, k\in \mathbb N\}$ of $H$, say
$F=\mathrm{span}\{v_1,\dots,v_m\}$, then
\[
 \pi^F(x)=\sum_{i=1}^m\langle x,Q^{-1}v_i\rangle_Ev_i=\sum_{i=1}^m\langle x,e_i\rangle_Ee_i,
\]
namely $\pi^F$ coincides with the orthogonal projection in $E$ over the subspace spanned by $e_1$, $\dots$,$e_m$.

Let $\tilde F$ be the kernel of $\pi^F$. We denote by $\gamma^F$ the image measure of $\gamma$ of $F$ through
$\pi_F$, and by $\gamma^F$ the image measure of $\gamma$ on $\tilde F$ through $I-\pi^F$. We identify in a standard
way $F$ with $\mathbb R^m$, namely the element $\sum_{i=1}^mx_if_i\in F$ is identified with the vector
$(x_1,\,\dots,x_m)\in\mathbb R^m$ and we consider the measure $\theta^F$ on $F$.

We stress that the norm and the associated distance used in the
definition of $\theta^F$ are inherited from the $H-$norm on $F$, not
from the $E-$norm. For instance, if $E=\mathbb R^m=F$, then
$dH^{m-1}=dS\circ Q^{-1/2}$ where $dS$ is the usual
$(m-1)-$dimensional spherical Hausdorff measure. So, for every Borel
set $E$,
\[
 \theta ^F(A)=\frac{1}{(2\pi)^{m/2}}\int_{Q^{-1/2}(A)}e^{-|y|^2/2}dS.
\]

In the general case, for any Borel (or, more general, Suslin) set
$A\subset E$ we set
\[
 \rho^F:=\int_{\tilde F}\theta^F(A_x)d\gamma_F(x),
\]
where $A_x:=\{y\in F:\, x+y\in A\}$. By \cite[Proposition 3.2]{Fe},
the map $F\mapsto\rho^F(A)$ is well defined (namely, the function
$x\mapsto\theta^F(A_x)$ is measurable with respect to $\gamma_F$)
and increasing, i.e. if $F_1\subset F_2$ then
$\rho^{F_1}\leq\rho^{F_2}$. This is sketched in \cite{Fe}, a
detailed proof is in \cite[Lemma 3.1]{AMP}. By the way, this is the
reason to choose the spherical Hausdorff measure in $\mathbb R^m$:
if the spherical haussdorf measure is replaced by the usual
Hausdorff measure, such a monotonicity condition may fails.

The Hausdorff-Gauss measure of Feyel-de La Pradelle is defined by
\begin{equation}
 \rho(A):=\mathrm{sup}\{\rho^F(A):\, F\subset H, \text{ finite dimensional subspace}\}
\end{equation}

Similar definition were considered in \cite{AMP}
\begin{equation}
 \rho_1(A):=\mathrm{sup}\{\rho^F(A):\, F\subset Q(E'), \text{ finite dimensional subspace}\}
\end{equation}

and under the assumption that $\mathcal V\subset Q(E')$ in \cite{Hi}, the following Hausdorff-Gaussian measure was
defined
\begin{equation}
 \rho_{\mathcal V}:=\mathrm{sup}\{\rho^F(A):\, F\subset H, \text{ spanned by a finite number of elements of }
 \mathcal V\}
\end{equation} where $\rho_{\mathcal V}$ could depend on the choice of the basis $\mathcal V$.

The three type of Hausdorff-Gaussian measures can be compared  as
follow
\[
 \rho(A)\geq \rho_1(A)
\]
and  when $\mathcal V\subset Q(E')$ we have
\[
\rho_1(A)\geq \rho_{\mathcal V}(A).
\]

 The following Proposition is important in the sense that it permits to us to say that $\rho$ is a smooth measure and
 then to associate with it a positive continuous additive functional $L^{\rho}_t$ which we call, as in the finite
 dimensional case, the local time of $\mathbf M$ corresponding to $\rho$ with the Revuz correspondence. One can find
 the proof in \cite[Theorem 9]{FL}
 \begin{proposition}
 The Hausdorff-Gauss measure of Feyel-de La Pradelle $\rho$ charges no set of zero relative Gaussian capacity.
 \end{proposition}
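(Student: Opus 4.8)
The statement coincides with \cite[Theorem~9]{FL}; the plan is to recall how it is proved and to adapt it, if needed, from the global Gaussian capacity to the relative one. First I would record the global version: the Feyel-de La Pradelle measure $\rho$, regarded as a Borel measure on $E$, charges no set of zero Gaussian Sobolev capacity $C_{2,1}$ of \cite[Section~5.9]{Bo}. Since $\alpha C_{2,1}\le\Ecap\le\beta C_{2,1}$ by the remark following Proposition~\ref{prop:cho-cap}, this is the same as saying that $\rho$ charges no polar set, i.e. no $A\subset E$ with $\Ecap(A)=0$.

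To pass to the relative capacity, let $A\subset\obar$ with $\Cap(A)=0$ and write $A=(A\cap\mo)\cup(A\cap\partial\mo)$. By monotonicity of $\Cap$ one has $\Cap(A\cap\mo)=0$, and since $A\cap\mo\subset\mo$, Proposition~\ref{prop:relpol}(2) yields $\Ecap(A\cap\mo)=0$, whence $\rho(A\cap\mo)=0$ by the first step. What remains, $\rho(A\cap\partial\mo)=0$, is the heart of the matter: on the boundary Proposition~\ref{prop:relpol} only gives the inequality $\Cap\le\Ecap$, which points the wrong way, so a direct argument is needed.

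For the boundary part I would use the trace operator $\mathrm{Tr}\colon W^{1,2}(\mo,\gamma)\to L^2(\partial\mo,\rho)$ of \cite{CL}, which is bounded with some norm $C$. Given $\epsilon>0$, choose an open set $U\subset E$ with $A\subset U$ and $u\in W^{1,2}(\mo,\gamma)$ such that $u\ge1$ $\gamma$-a.e. on $U\cap\mo$ and $\|u\|_{W^{1,2}(\mo,\gamma)}^2<\epsilon$. Since $(u-1)^-$ vanishes $\gamma$-a.e. on $U\cap\mo$, the locality of the trace together with the relation $\mathrm{Tr}(v^-)=(\mathrm{Tr}\,v)^-$ forces $\mathrm{Tr}(u)\ge1$ $\rho$-a.e. on $U\cap\partial\mo$, which contains $A\cap\partial\mo$; hence
\[
\rho(A\cap\partial\mo)\le\rho(U\cap\partial\mo)\le\int_{\partial\mo}|\mathrm{Tr}(u)|^2\,d\rho\le C^2\|u\|_{W^{1,2}(\mo,\gamma)}^2<C^2\epsilon .
\]
Letting $\epsilon\downarrow0$ gives $\rho(A\cap\partial\mo)=0$, and combined with the previous step this proves $\rho(A)=0$. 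The main obstacle is exactly this last argument: it requires both the $\rho$-boundedness of the trace and its locality along $\partial\mo$ (so that $u\ge1$ on a one-sided neighbourhood of $A\cap\partial\mo$ forces the trace to be $\ge1$ there $\rho$-a.e.), both of which rely on the regularity assumptions on $G$ and the construction of \cite{CL}; should \cite[Theorem~9]{FL} already be stated for the relative capacity of $\mo$, the splitting above is unnecessary and the claim is immediate.
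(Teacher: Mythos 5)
The paper itself offers no proof of this proposition: it simply cites \cite[Theorem~9]{FL}, which is a statement about the \emph{global} Gaussian capacity $\Ecap$ (equivalently $C_{2,1}$), not about the relative capacity $\Cap$. You have correctly put your finger on exactly the point this citation glosses over: Proposition~\ref{prop:relpol} only gives $\Cap\le\Ecap$, so $\Cap(A)=0$ does not imply $\Ecap(A)=0$ for subsets of $\partial\mo$, and the reference alone does not cover the boundary part of $A$. Your decomposition $A=(A\cap\mo)\cup(A\cap\partial\mo)$, and the treatment of the interior part via Proposition~\ref{prop:relpol}(2) combined with the global result of \cite{FL}, are correct and already more than the paper provides.

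The remaining problem is the one you flag yourself, and it is a genuine gap rather than a routine verification: the boundary step needs a locality property of the trace (namely that $u\ge 1$ $\gamma$-a.e.\ on $U\cap\mo$ forces $\mathrm{Tr}\,u\ge 1$ $\rho$-a.e.\ on $U\cap\partial\mo$), which does not follow formally from Theorem~\ref{thm:Tr}, since the trace there is produced by a \emph{global} Lipschitz approximation; localizing it would itself require an argument (e.g.\ a cut-off near $U$ together with Lemma~\ref{lem:mult}). A secondary point: for $p=2$ the trace is bounded into $L^q(\partial\mo,\rho)$ only for $q<2$ unless \eqref{eq:esssup} holds, so your $L^2$ estimate should be an $L^q$ estimate (harmless, since $\rho(\partial\mo)<\infty$ under Assumption~\ref{Hypo}). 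The standard way to close the boundary part without any locality issue is the finite-energy-integral criterion: Theorem~\ref{thm:Tr} gives $\int_{\partial\mo}|\varphi|\,d\rho\le C\,\|\varphi\|_{W^{1,2}(\mo,\gamma)}$ for all $\varphi\in\mathrm{Lip}(E)$, a dense subspace of $W^{1,2}(\mo,\gamma)$ consisting of continuous (hence relatively quasi-continuous) functions on $\obar$; therefore $\rho_{|\partial\mo}$ is a measure of finite energy integral for $(\EO,W^{1,2}(\mo,\gamma))$ and, by the transfer method and \cite[Chapter~III]{MR} or \cite[Chapter~2]{F80}, charges no set of zero relative capacity. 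With that substitution your argument becomes complete, and it supplies a justification the paper omits.
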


 Now we give the integration by parts under the following not restrictive assumptions,

 \begin{Hypo}\label{Hypo}
 ~

 \begin{enumerate}
  \item[(A.1)] $ G\in W^{2,q}(E,\gamma)$ for each $q>1$,
  \item[(A.2)] $\gamma( G^{-1}(-\infty,0))>0,\text{ } G^{-1}(0)\neq \emptyset$,
  \item[(A.3)] there exist $\delta>0$ such that $1/|\Dho G|_H\in L^q( G^{-1}(-\delta,\delta),\gamma)$
  for each $q>1$.
 \end{enumerate}
 \end{Hypo}

 The following theorem (see \cite[Corollary 4.2]{CL}) give a definition of a trace operator from a limiting procedure of a sequence of Lipschitz
 functions,
 \begin{theorem}\label{thm:Tr}

 For each $p>1$ and $\varphi\in W^{1,p}(\mo,\gamma)$ there exists $\psi\in\bigcap_{q<p} L^p(\{ G=0\},\rho)$
 with the
 following property: if $(\varphi_n)_n\subset\mathrm{Lip}(E)$ are such that $({\varphi_n}_{|\mo})$ converge to $\varphi$
 in $W^{1,p}(\mo,\gamma)$, the sequence $({\varphi_n}_{|\mo})$ converges to $\psi$ in $ L^q(\{ G=0\},\rho)$,
 for every $q<p$. In addition, if the condition
 \begin{equation}\label{eq:esssup}
 \gamma-\mathrm{ess}\text{ }\sup_{x\in\mo} \mathrm{div}\left(\frac{\Dho G}{|\Dho G|_H}\right)<\infty
 \end{equation}
 holds then ${\varphi_n}_{|\{ G=0\}}$ converges in $L^p(\{ G=0\},\rho)$.

 \end{theorem}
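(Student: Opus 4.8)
The plan is to reduce everything to an a priori trace inequality for Lipschitz functions and then close by density, the analytic heart being the Gaussian Gauss--Green (integration by parts) formula for $\mo=\{G<0\}$. Write $\nu_G:=D_HG/|D_HG|_H$; by Assumption~\ref{Hypo}(A.3) this is well defined $\gamma$-a.e.\ on the collar $G^{-1}(-\delta,\delta)$. The first step is to establish the formula
\[
 \int_{\mo}D_k^\mo\varphi\,d\gamma=\int_{\mo}\hat v_k\,\varphi\,d\gamma+\int_{\{G=0\}}\nu_G^k\,\mathrm{Tr}\,\varphi\,d\rho
\]
(this is essentially formula~(1.1) of the introduction; cf.\ \cite{CL}), first for $\varphi\in\mathrm{Lip}(\mo)$ with $\mathrm{Tr}\,\varphi:=\varphi|_{\{G=0\}}$. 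I would prove it through finite dimensional sections: on $\mo\cap(x+F)$ one has the classical divergence theorem against the Gaussian weight defining $\theta^F$; integrating in the complementary variable against $\gamma_F$ and letting $F$ run through an increasing sequence exhausting $H$, the boundary terms converge to integrals against $\rho$ by the defining monotone structure $\rho=\sup_F\rho^F$, while Assumption~\ref{Hypo} ($G\in\bigcap_qW^{2,q}$, $1/|D_HG|_H\in\bigcap_qL^q$ on the collar) furnishes the uniform integrability needed to pass to the limit in the bulk terms. This is the main obstacle.

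Granting the formula, fix $1<q<p$ and $\varphi\in\mathrm{Lip}(\mo)$ (which, in the density argument below, may be taken bounded by truncation, so that $|\varphi|^q\in\mathrm{Lip}(E)$ after extension). Apply the formula with $\varphi$ replaced by $\chi\,|\varphi|^q\,\nu_G^k$, where $\chi=\theta\circ G$ is a cutoff equal to $1$ near $\{G=0\}$ and supported in the collar --- this replacement is legitimate (there $\nu_G\in\bigcap_qW^{1,q}$) and, since $\rho$ is carried by $\{G=0\}$ where $\chi=1$, does not alter the boundary integral. Summing over $k$ and using the product rule, $|\nu_G|_H=1$ and $|\chi|\le1$, one arrives at
\[
 \int_{\{G=0\}}|\varphi|^q\,d\rho\ \le\ q\int_{\mo}|\varphi|^{q-1}\,|\Dho\varphi|_H\,d\gamma\ +\ \int_{\mo}|\varphi|^q\,|\kappa|\,d\gamma ,
\]
where $\kappa:=-\delta(\chi\nu_G)$ ($\delta$ the Gaussian divergence) is, by Assumption~\ref{Hypo}, in $\bigcap_{r>1}L^r(\mo,\gamma)$. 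H\"older's inequality (exponents $\tfrac p{q-1},\tfrac p{p-q+1}$ in the first integral, $\tfrac pq,\tfrac p{p-q}$ in the second) and $\gamma(E)=1$ then give a constant $C_q$ with $\|\varphi\|_{L^q(\{G=0\},\rho)}\le C_q\,\|\varphi\|_{W^{1,p}(\mo,\gamma)}$ for every $\varphi\in\mathrm{Lip}(\mo)$.

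For $\varphi\in W^{1,p}(\mo,\gamma)$ pick $(\varphi_n)\subset\mathrm{Lip}(E)$ with ${\varphi_n}_{|\mo}\to\varphi$ in $W^{1,p}(\mo,\gamma)$; applying the inequality to $\varphi_n-\varphi_m$ shows $({\varphi_n}_{|\{G=0\}})$ is Cauchy, hence convergent, in $L^q(\{G=0\},\rho)$ for each $q<p$, with limit $\psi$ independent of the sequence and lying in $\bigcap_{q<p}L^q(\{G=0\},\rho)$; this is the trace $\mathrm{Tr}\,\varphi:=\psi$ and proves the first assertion. If in addition \eqref{eq:esssup} holds, then $\kappa$ is bounded above on $\mo$ --- its positive part being controlled by the finite left-hand side of \eqref{eq:esssup}, modulo the contribution of $D_H\chi$, which is absorbed by a further localisation/integration by parts --- and since $|\varphi|^q\ge0$ one may replace $|\kappa|$ above by $\kappa^+\in L^\infty$ and push the estimate to the endpoint $q=p$ (H\"older with $\tfrac p{p-1},p$ in the gradient term), obtaining $\|\varphi\|_{L^p(\{G=0\},\rho)}\le C_p\,\|\varphi\|_{W^{1,p}(\mo,\gamma)}$ on $\mathrm{Lip}(\mo)$; repeating the density argument with $q=p$ yields the claimed convergence in $L^p(\{G=0\},\rho)$.

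Thus the conceptual difficulty is entirely in the first step: giving rigorous meaning to the Gauss--Green formula and carrying the finite dimensional divergence theorem through the limit $F\uparrow H$ to the Feyel--de~La~Pradelle measure, where both the monotonicity $\rho=\sup_F\rho^F$ and the full strength of Assumption~\ref{Hypo} are essential. The remaining steps are Gaussian calculus, H\"older and density, the only secondary point being the uniform control of the cutoff terms at the critical exponent, which is precisely what \eqref{eq:esssup} secures.
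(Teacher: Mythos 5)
The paper gives no proof of this theorem: it is imported verbatim from \cite[Corollary 4.2]{CL} (the statement itself carries the citation), so there is no internal argument to measure your proposal against. Judged on its own terms, your outline does follow the strategy of the cited source --- an integration by parts identity for Lipschitz functions, an a priori trace inequality obtained by testing that identity against $\chi|\varphi|^q\nu_G^k$ and summing in $k$, then closure by density --- and the bookkeeping in your second and third steps (the H\"older exponents, the identification of the divergence term as the obstruction to the endpoint $q=p$, the correct direction in which the one-sided bound \eqref{eq:esssup} enters) is sound.

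Two genuine gaps remain, however. First, the entire analytic content sits in your first step, which you yourself flag and then only gesture at: passing the finite-dimensional divergence theorem on the sections $\mo\cap(x+F)$ to the limit $F\uparrow H$ is not a soft consequence of $\rho=\sup_F\rho^F$. The finite-dimensional boundary term carries the sectional normal $\pi^F(D_HG)/|\pi^F(D_HG)|$ and the sectional surface measure of $\{y\in F: G(x+y)=0\}$, and reconciling these with $\nu_G^k$ and with $\theta^F$ of the full level set --- uniformly enough in $x$ to integrate against $\gamma_F$ and then pass to the supremum over $F$ --- is precisely where Assumption \ref{Hypo} is consumed; without that reconciliation the ``main obstacle'' is named rather than surmounted. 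Second, at the endpoint $q=p$ your cutoff produces the term $\int_\mo|\varphi|^p\,|D_H\chi|_H\,d\gamma$ with $|D_H\chi|_H\leq C\,|D_HG|_H$, and under (A.1) the function $|D_HG|_H$ lies only in $\bigcap_{r>1}L^r(\gamma)$, not in $L^\infty$; H\"older then demands $\varphi\in L^{pr'}$ for some $r'>1$, which $\|\varphi\|_{W^{1,p}(\mo,\gamma)}$ does not control. The phrase ``absorbed by a further localisation/integration by parts'' does not resolve this, and it is exactly the point at which the endpoint assertion needs a different argument (in \cite{CL} the $L^p$-convergence of the approximating sequence is established directly under \eqref{eq:esssup} rather than through a clean a priori bound of the form you propose).
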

 Theorem \ref{thm:Tr} justify the following definition (see \cite[Definition 4.3]{CL})

 \begin{definition}\label{def:Tr}
 For each $\varphi\in W^{1,p}(\mo,\gamma)\text{ }p>1$, we define the trace $\mathrm{Tr}\varphi$ of $\varphi$ at
 $\{ G=0\}$ as the function $\psi$ given by Theorem \ref{thm:Tr}.

 \end{definition}
 Let $\{v_k\,|\,k\in\mathbb N\}$ be an orthonormal basis of $H(\gamma)$. Now the integration by parts of functions
 in $W^{1,2}(\mo,\gamma)$ is as follow (see \cite[Corollary 4.4]{CL})

 \begin{theorem}\label{thm:ibp}
 For every $\varphi\in W^{1,2}(\mo,\gamma)$, we have

 \begin{equation}
 \int_{\mo}D_k^{\mo}\varphi d\gamma=\int_{\mo}\hat{v}_k\varphi d\gamma+
 \int_{\partial\mo}\frac{D_k^{\mo} G}{|\Dho G|_H}\mathrm{Tr}\varphi d\rho
 \end{equation}
 where $\mathrm{Tr}$ is the operator trace as defined in Definition \ref{def:Tr}.
 \end{theorem}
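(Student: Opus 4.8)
The plan is to prove the integration by parts formula first for Lipschitz functions and then to pass to the limit in $W^{1,2}(\mo,\gamma)$.

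\emph{First step (Lipschitz case).} I would first establish the formula for $u\in\mathrm{Lip}(E)\subset W^{1,2}(\gamma)$ by regularising the indicator $1_{\mo}=1_{\{G<0\}}$. For $\epsilon>0$ pick $\eta_\epsilon\in C^\infty(\mathbb R)$ with $0\le\eta_\epsilon\le1$, $\eta_\epsilon\equiv1$ on $(-\infty,-\epsilon]$, $\eta_\epsilon\equiv0$ on $[0,\infty)$ and $\int_{\mathbb R}\eta_\epsilon'\,dt=-1$. Since $G\in W^{2,q}(E,\gamma)$ by (A.1), the chain rule gives $\eta_\epsilon(G)\in W^{1,2}(\gamma)$ with $D_k(\eta_\epsilon(G))=\eta_\epsilon'(G)D_kG$, and $u\,\eta_\epsilon(G)\in W^{1,2}(\gamma)$ by Lemma~\ref{lem:mult}. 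Applying the global Gaussian integration by parts $\int_E D_kv\,d\gamma=\int_E\hat v_k\,v\,d\gamma$ (valid because $\beta^\gamma_{v_k}=-\hat v_k$) to $v=u\,\eta_\epsilon(G)$ yields
\[
\int_E D_ku\cdot\eta_\epsilon(G)\,d\gamma+\int_E u\,\eta_\epsilon'(G)\,D_kG\,d\gamma=\int_E\hat v_k\,u\,\eta_\epsilon(G)\,d\gamma .
\]
As $\epsilon\downarrow0$ one has $\eta_\epsilon(G)\to1_\mo$ boundedly $\gamma$-a.e.\ (here $\gamma(\{G=0\})=0$, since $D_kG=0$ $\gamma$-a.e.\ on $\{G=0\}$ whereas (A.3) forces $|\Dho G|_H>0$ $\gamma$-a.e.\ near $\{G=0\}$), so the first and third terms tend to $\int_\mo D_ku\,d\gamma$ and $\int_\mo\hat v_k\,u\,d\gamma$. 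For the middle term I would invoke the Gaussian coarea formula underlying the construction of $\rho$ (the Feyel--de~La~Pradelle measures $\theta^F,\rho^F$ and their monotone limit): writing $\rho_t$ for the Hausdorff--Gauss measure on $\{G=t\}$,
\[
\int_E u\,\eta_\epsilon'(G)\,D_kG\,d\gamma=\int_{\mathbb R}\eta_\epsilon'(t)\Big(\int_{\{G=t\}}u\,\frac{D_kG}{|\Dho G|_H}\,d\rho_t\Big)dt ,
\]
and, using (A.1) and (A.3), the map $t\mapsto\int_{\{G=t\}}u\,(D_kG/|\Dho G|_H)\,d\rho_t$ is finite and continuous near $0$, so this converges to $-\int_{\partial\mo}u\,(D_k^{\mo}G/|\Dho G|_H)\,d\rho$. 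This gives the formula for $u\in\mathrm{Lip}(E)$, with $\mathrm{Tr}\,u=u|_{\{G=0\}}$.

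\emph{Second step (passage to the limit).} Given $\varphi\in W^{1,2}(\mo,\gamma)$, I would choose $u_n\in\mathrm{Lip}(E)$ with $u_n|_\mo\to\varphi$ in $W^{1,2}(\mo,\gamma)$ (possible by the definition of the space and McShane extension of Lipschitz functions), apply the first step to each $u_n$,
\[
\int_\mo D_k^{\mo}u_n\,d\gamma=\int_\mo\hat v_k\,u_n\,d\gamma+\int_{\partial\mo}\frac{D_k^{\mo}G}{|\Dho G|_H}\,\big(u_n|_{\{G=0\}}\big)\,d\rho ,
\]
and let $n\to\infty$. Since $D_k^{\mo}u_n=[\Dho u_n,v_k]_H\to[\Dho\varphi,v_k]_H=D_k^{\mo}\varphi$ in $L^2(\mo,\gamma)$ and $u_n\to\varphi$ in $L^2(\mo,\gamma)$ with $\hat v_k\in L^2(\gamma)$, the left-hand side and the first term on the right converge to $\int_\mo D_k^{\mo}\varphi\,d\gamma$ and $\int_\mo\hat v_k\,\varphi\,d\gamma$. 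For the boundary term, note $|D_k^{\mo}G|=|[\Dho G,v_k]_H|\le|\Dho G|_H$ (as $|v_k|_H=1$), so the coefficient $D_k^{\mo}G/|\Dho G|_H$ is bounded by $1$ in modulus $\rho$-a.e.\ on $\{G=0\}$ and, thanks to (A.1) and (A.3), has a well-defined $\rho$-trace there; moreover $\rho(\{G=0\})<\infty$, as follows by applying Theorem~\ref{thm:Tr} to the constant function $1$. By Theorem~\ref{thm:Tr}, $\mathrm{Tr}\,u_n\to\mathrm{Tr}\,\varphi$ in $L^q(\{G=0\},\rho)$ for every $q<2$, hence by H\"older's inequality
\[
\Big|\int_{\partial\mo}\frac{D_k^{\mo}G}{|\Dho G|_H}\big(\mathrm{Tr}\,u_n-\mathrm{Tr}\,\varphi\big)\,d\rho\Big|\le\rho(\{G=0\})^{1-1/q}\,\|\mathrm{Tr}\,u_n-\mathrm{Tr}\,\varphi\|_{L^q(\{G=0\},\rho)}\longrightarrow 0 ,
\]
and the displayed identity passes to the limit, giving the claim.

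\emph{Main obstacle.} The soft part is the second step; the substantial work is the first, namely the infinite-dimensional Gaussian coarea/divergence identity that turns $\int_E u\,\eta_\epsilon'(G)\,D_kG\,d\gamma$ into a boundary integral against $\rho$ in the limit. Making this rigorous requires controlling the level-set integrals $\int_{\{G=t\}}u\,(D_kG/|\Dho G|_H)\,d\rho_t$ uniformly for $t$ near $0$ through the finite-dimensional projections $\pi^F$ and the monotone construction $\rho=\sup_F\rho^F$, and it is precisely here that Assumption~\ref{Hypo} — above all the integrability (A.3) of $1/|\Dho G|_H$ near $\{G=0\}$ together with the $W^{2,q}$-regularity (A.1) of $G$ — is indispensable.
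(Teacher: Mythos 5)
The paper does not actually prove Theorem \ref{thm:ibp}: it imports the statement verbatim from \cite[Corollary~4.4]{CL}, together with the trace operator of Theorem \ref{thm:Tr} and Definition \ref{def:Tr}, so there is no in-paper argument to compare yours against. Measured against the source it is quoting, your outline follows essentially the same route as Celada--Lunardi: regularise $1_{\{G<0\}}$ by $\eta_\epsilon(G)$, apply the global Gaussian integration by parts $\int_E D_k v\,d\gamma=\int_E\hat v_k v\,d\gamma$ (i.e.\ $\beta_{v_k}^\gamma=-\hat v_k$) to $u\,\eta_\epsilon(G)$, identify the surviving boundary term through the Feyel--de~La~Pradelle coarea disintegration, and then extend from $\mathrm{Lip}(E)$ to $W^{1,2}(\mo,\gamma)$ by density using the trace theorem. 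The second step is carried out correctly (the H\"older estimate with $q<2$, the boundedness $|D_k^{\mo}G|\le|\Dho G|_H$, and $\rho(\{G=0\})<\infty$ are all legitimate given Theorem \ref{thm:Tr}), and your observation that $\gamma(\{G=0\})=0$ because $D_HG$ vanishes a.e.\ on level sets while (A.3) forces $|D_HG|_H>0$ near $\{G=0\}$ is the argument actually used in \cite{CL}.

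The one point I would press you on is that the entire analytic content of the theorem sits inside the identity
\[
\int_E u\,\eta_\epsilon'(G)\,D_kG\,d\gamma=\int_{\mathbb R}\eta_\epsilon'(t)\Bigl(\int_{\{G=t\}}u\,\tfrac{D_kG}{|\Dho G|_H}\,d\rho_t\Bigr)dt,
\]
together with the continuity of $t\mapsto\int_{\{G=t\}}u\,(D_kG/|\Dho G|_H)\,d\rho_t$ near $t=0$, and you assert both rather than prove them. The coarea formula for $\rho$ is not part of the definition of the Feyel--de~La~Pradelle measure recalled in Section~4 of this paper (only the monotone construction $\rho=\sup_F\rho^F$ is), and establishing it --- and the regularity in $t$ of the level-set integrals, which is where (A.1) and (A.3) really enter --- occupies most of Sections~3--4 of \cite{CL}. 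You correctly flag this as the main obstacle, so the proposal is an accurate road map; but as a self-contained proof it has a genuine gap precisely at that step, and closing it would amount to reproving the main technical results of the cited reference.
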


 \begin{proposition}
 For every $\varphi\in W^{1,p}(E,\gamma)$, the trace of $\varphi_{|\mo}$ at $ G^{-1}(0)$ coicides
 $\rho-$a.e. with the restriction to $ G^{-1}(0)$ of any continous version $\tilde{\varphi}$ of $\varphi$.
 \end{proposition}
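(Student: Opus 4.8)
The plan is to realize both the trace $\mathrm{Tr}(\varphi_{|\mo})$ and the pointwise restriction $\tilde\varphi_{|G^{-1}(0)}$ as $\rho$-a.e.\ limits of the \emph{same} sequence of Lipschitz approximants restricted to $G^{-1}(0)$, and then invoke uniqueness of limits. Observe first that a continuous $\gamma$-version $\tilde\varphi$ is in fact unique: since $\gamma$ is nondegenerate its topological support is all of $E$, so two continuous functions coinciding $\gamma$-a.e.\ coincide everywhere. Now fix $\varphi\in W^{1,p}(E,\gamma)$ and, by density of $\mathrm{Lip}(E)$ in $W^{1,p}(E,\gamma)$, pick $(\varphi_n)\subset\mathrm{Lip}(E)$ with $\varphi_n\to\varphi$ in $W^{1,p}(E,\gamma)$. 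Restriction to $\mo$ sends $W^{1,p}(E,\gamma)$ into $W^{1,p}(\mo,\gamma)$ in a norm-decreasing way (exactly as in the proof of Proposition \ref{prop:relpol}(1)), so $\varphi_{n|\mo}\to\varphi_{|\mo}$ in $W^{1,p}(\mo,\gamma)$. Consequently, by Theorem \ref{thm:Tr} and Definition \ref{def:Tr}, $\varphi_{n|G^{-1}(0)}\to\mathrm{Tr}(\varphi_{|\mo})$ in $L^q(\{G=0\},\rho)$ for every $q<p$, and passing to a subsequence we may assume this convergence holds $\rho$-a.e.\ on $G^{-1}(0)$.

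It remains to show that, along a further subsequence, $\varphi_n\to\tilde\varphi$ $\rho$-a.e.\ on $G^{-1}(0)$; combining this with the previous $\rho$-a.e.\ limit gives $\mathrm{Tr}(\varphi_{|\mo})=\tilde\varphi_{|G^{-1}(0)}$ $\rho$-a.e., which is the assertion. To get it, recall that convergence in the Gaussian Sobolev norm forces, after passing to a subsequence, quasi-everywhere convergence of the quasi-continuous versions with respect to the relevant Gaussian capacity: for $p\ge 2$ this follows from $W^{1,p}(E,\gamma)\hookrightarrow W^{1,2}(E,\gamma)$ together with Proposition \ref{prop:rqc-rep}(2) applied on $E$ (equivalently \cite[Lemma 5.9.5, Theorem 5.9.6]{Bo}), and for $1<p<2$ by the same reasoning with the $(p,1)$-capacity $C_{p,1}$. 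Each $\varphi_n$ is continuous, hence is its own quasi-continuous version, and $\tilde\varphi$ is continuous, hence a quasi-continuous version of $\varphi$; by uniqueness of quasi-continuous versions up to capacity-null sets, the limit along the subsequence is $\tilde\varphi$, so $\varphi_n\to\tilde\varphi$ quasi-everywhere. Finally, every set of zero $(p,1)$-Gaussian capacity is, when intersected with $\obar$, relatively polar — for $p=2$ this is Proposition \ref{prop:relpol}(1) together with the comparison $\alpha C_{2,1}\le\Ecap\le\beta C_{2,1}$, and for general $p>1$ one uses that the Hausdorff--Gauss measure stays smooth with respect to $C_{p,1}$, as in the Feyel--de La Pradelle construction of \cite{FL} — so by the Proposition stating that $\rho$ charges no set of zero relative Gaussian capacity we conclude $\varphi_n\to\tilde\varphi$ $\rho$-a.e.\ on $G^{-1}(0)$, completing the argument.

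The only genuinely delicate step is this last one, namely passing from $W^{1,p}$-convergence of the approximants to their $\rho$-a.e.\ convergence to the \emph{continuous} version. For $p\ge 2$ it is routine via the chain ``$W^{1,2}$-convergence $\Rightarrow$ $\Ecap$-q.e.\ convergence $\Rightarrow$ $\Cap$-q.e.\ convergence $\Rightarrow$ $\rho$-a.e.\ convergence'', which is the case I would foreground; for $1<p<2$ one must invoke the smoothness of the Hausdorff--Gauss measure with respect to $C_{p,1}$, which I would cite from the Feyel--de La Pradelle theory rather than reprove here. Everything else — the Lipschitz approximation, the contractivity of restriction to $\mo$, and the matching of $L^q(\rho)$ and $\rho$-a.e.\ limits along a common subsequence — is elementary.
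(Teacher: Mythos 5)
The paper itself offers no proof of this proposition: it is stated without argument, being imported (like the rest of this section) from \cite{CL}, so there is no in-paper proof to compare against. Judged on its own, your strategy --- realize $\mathrm{Tr}(\varphi_{|\mo})$ and $\tilde\varphi_{|G^{-1}(0)}$ as $\rho$-a.e.\ limits of one and the same sequence of Lipschitz approximants --- is the natural one, and the supporting steps you cite are sound: uniqueness of the continuous version because $\mathrm{supp}\,\gamma=E$, contractivity of restriction as in Proposition \ref{prop:relpol}(1), the identification of the $L^q(\{G=0\},\rho)$ limit via Theorem \ref{thm:Tr} and Definition \ref{def:Tr}, and the chain ``$W^{1,2}$-convergence $\Rightarrow$ q.e.\ convergence of quasi-continuous versions $\Rightarrow$ $\rho$-a.e.\ convergence'' using that continuous functions are their own quasi-continuous versions and that $\rho$ charges no set of zero (relative) Gaussian capacity. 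For $p\geq 2$ this closes the argument with exactly the tools available in the paper.

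The one genuine soft spot is the case $1<p<2$, which you flag but do not resolve. There the approximants converge only in $W^{1,p}$, so the exceptional set for the q.e.\ convergence is merely $C_{p,1}$-null, and since $C_{p,1}$-null sets for $p<2$ form a strictly \emph{larger} class than $C_{2,1}$-null sets, the paper's Proposition on $\rho$ (quoted from \cite[Theorem 9]{FL}, which concerns the quadratic capacity) does not cover it; you would need the stronger statement that the codimension-one Hausdorff--Gauss measure charges no set of zero $C_{p,1}$-capacity for every $p>1$, and that must be located precisely in \cite{FL} or \cite{CL} rather than asserted. Since the rest of the paper only ever uses the proposition through $W^{1,2}$ and the integration by parts formula, this gap is harmless for the applications here, but as a proof of the proposition as stated (for arbitrary $p>1$) it is incomplete without that reference.
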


 \section{Componentwise Skorohod decomposition}\label{section:c.sm} To obtain the Skorohod decomposition we use,
 as in the finite dimensional situation, the well known Fukushima decomposition theorem which holds in the situation
 of quasi-regular Dirichlet forms by using the transfer method see \cite[Theorem 4.3]{AR},\cite[Theorem VI.3.5]{MR}.

 \begin{theorem}\label{thm:fuku}

 Let $\varphi\in W^{1,2}(\mo,\gamma)$ and let $\tilde{\varphi}$ be a relatively quasi-continous
 $\gamma-$version of $\varphi$. Then the additive functional $(\tilde{\varphi}(X_t)-\tilde{\varphi}(X_0))_{t\geq 0}$
 of $\mathbf M$ can uniquely be represented as

  \[
  \tilde{\varphi}(X_t)-\tilde{\varphi}(X_0)=M_t^{[\varphi]}+N_t^{[\varphi]}, t\geq 0
  \]
 where $M^{[\varphi]}:=(M_t^{[\varphi]})_{t\geq 0}$ is a MAF of $\mathbf M$ of finite energy and
 $N^{[\varphi]}:=(N_t^{[\varphi]})_{t\geq 0}$ is a CAF of $\mathbf M$ of zero energy.
\end{theorem}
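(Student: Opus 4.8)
The plan is to deduce the statement from the classical Fukushima decomposition for \emph{regular} Dirichlet forms by means of the local compactification (transfer) method recalled in the Preliminaries. First I would record what is already established: by Corollary~\ref{cor:tight} and the discussion following it, $(\EO, W^{1,2}(\mo,\gamma))$ is a quasi-regular, local Dirichlet form on $L^2(\obar, m)$ with $m(A)=\gamma(A\cap\mo)$, and $\mathbf M$ is a diffusion properly associated with it. Applying \cite[Theorem VI.1.2]{MR} I obtain an $\EO$-nest $(E_n)$ of compact metrizable sets, a locally compact separable metric space $\hat Y$ which is a local compactification of $Y:=\bigcup_n E_n$, and the image form $(\hat{\mathscr E}, D(\hat{\mathscr E}))$, which is a regular Dirichlet form on $L^2(\hat Y,\hat m)$ with $\hat m=m\circ i^{-1}$; moreover there is an $\EO$-exceptional set $N\subset\obar$ such that $\obar\setminus N$ is $\mathbf M$-invariant and the trivial extension $\hat{\mathbf M}$ of $\mathbf M|_{\obar\setminus N}$ to $\hat Y$ is a Hunt process properly associated with $(\hat{\mathscr E}, D(\hat{\mathscr E}))$.

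Next I would transfer $\varphi$. Under the isometry $\hat i:L^2(\hat Y,\hat m)\to L^2(\obar,m)$ the element $\varphi\in W^{1,2}(\mo,\gamma)=D(\EO)$ corresponds to a unique $\hat\varphi\in D(\hat{\mathscr E})$ with $\hat{\mathscr E}(\hat\varphi)=\EO(\varphi)$. Since $(\hat{\mathscr E},D(\hat{\mathscr E}))$ is regular, the classical Fukushima decomposition (\cite[Theorem VI.2.5]{MR}, cf.\ \cite[Theorem 4.3]{AR} and \cite{F80}) applies: for a quasi-continuous $\hat m$-version $\tilde{\hat\varphi}$ of $\hat\varphi$ one has $\tilde{\hat\varphi}(\hat X_t)-\tilde{\hat\varphi}(\hat X_0)=\hat M_t+\hat N_t$ for $t\geq 0$, where $\hat M$ is an MAF of $\hat{\mathbf M}$ of finite energy and $\hat N$ is a CAF of zero energy. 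It then remains to carry this identity back along the embedding $\obar\setminus N\hookrightarrow\hat Y$: by property~(ii) of the local compactification the trace topologies on each $E_k$ agree, so the restriction of $\tilde{\hat\varphi}$ is, up to a relatively polar set, a relatively quasi continuous version $\tilde\varphi$ of $\varphi$ in the sense of Proposition~\ref{prop:rqc-rep}; here one uses that the relative capacity $\Cap$ of Definition~\ref{def:relcap} is exactly the capacity of $\EO$ regarded as a form on $L^2(\obar,m)$, so that $\EO$-exceptional sets and relatively polar sets coincide, and likewise the two notions of quasi continuous version. Under the same identification $\hat{\mathbf M}$ and $\mathbf M$ agree off $N$, and additive functionals together with their energies and the finite-energy/zero-energy dichotomy are preserved; setting $M^{[\varphi]}:=\hat M$ and $N^{[\varphi]}:=\hat N$ yields the asserted decomposition for r.q.e.\ starting points.

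For uniqueness I would argue directly within the theory of $\mathbf M$: if $M^{[\varphi]}+N^{[\varphi]}=M'+N'$ are two such decompositions, then $M^{[\varphi]}-M'$ is an MAF of finite energy which equals the CAF $N'-N^{[\varphi]}$ of zero energy, and a continuous MAF of finite energy which is also a CAF of zero energy must be evanescent by the standard argument (its energy controls the mean quadratic variation), whence $M^{[\varphi]}=M'$ and $N^{[\varphi]}=N'$. This part is routine.

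The step I expect to require the most care is precisely the compatibility of the two capacities and of the two notions of quasi continuity under the transfer map, i.e.\ that ``relatively quasi continuous for $\Cap$ on $\obar$'' becomes ``$\hat{\mathscr E}$-quasi continuous on $\hat Y$'' after restriction, and that relatively polar subsets of $\obar$ correspond exactly to $\hat{\mathscr E}$-exceptional subsets of $\hat Y$ contained in $Y$. Once this bookkeeping is settled, the remainder is a direct application of the transfer machinery of \cite{MR} together with \cite[Theorem 4.3]{AR}.
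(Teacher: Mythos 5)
Your proposal is correct and follows essentially the same route as the paper: the paper states Theorem \ref{thm:fuku} without a written-out proof, justifying it by the transfer (local compactification) method of \cite[Theorem VI.1.2]{MR} recalled in its preliminaries together with the Fukushima decomposition for quasi-regular Dirichlet forms from \cite[Theorem VI.2.5]{MR} and \cite[Theorem 4.3]{AR}, which is exactly the argument you spell out. Your additional care about the compatibility of relatively polar sets and relative quasi-continuity with the $\hat{\mathscr E}$-exceptional sets and quasi-continuity on $\hat Y$, and the standard energy argument for uniqueness, are the right details to fill in.
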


To evaluate the bracket $\langle M^{[\varphi]}\rangle$ of the
martingale additive functional $M^{[\varphi]}$ for $\varphi\in
W^{1,2}(\mo,\gamma)$ we use a standard technic as for the finite
dimensional case \cite{BHs} and used in the infinite dimensional
framework in \cite[Proposition 4.5]{AR} in the case $\mo=E$ with
help of the transfer method. The proof still the same in our
framework. Remark that one need no regularity assumption on $\mo$
and then in this step the open set $\mo$ still arbitrary.

\begin{proposition}\label{pro:M}
Let $\varphi\in W^{1,2}(\mo,\gamma)$, then
\begin{equation}
\langle
M^{[\varphi]}\rangle_t=\int_0^t[\Dho\varphi(X_s),\Dho\varphi(X_s)]_Hds,\quad
t\geq 0
\end{equation}

\end{proposition}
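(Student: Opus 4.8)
The plan is to reduce the statement to the ``hypoth\`ese de repr\'esentabilit\'e'' proved above, together with the standard identity relating the quadratic variation of the martingale part $M^{[\varphi]}$ to the energy measure of the Dirichlet form. Concretely, recall from Fukushima's theory (transferred to our quasi-regular setting via the compactification method of Section~2) that to each $\varphi\in W^{1,2}(\mo,\gamma)$ there is associated a Revuz measure $\mu_{\langle M^{[\varphi]}\rangle}$ — the energy measure — such that $\langle M^{[\varphi]}\rangle$ is, up to the Revuz correspondence, the positive continuous additive functional of $\mu_{\langle M^{[\varphi]}\rangle}$. The energy measure is characterized by the relation
\[
\int_{\obar}\psi\,d\mu_{\langle M^{[\varphi]}\rangle}
 = 2\EO(\varphi,\varphi\psi)-\EO(\varphi^2,\psi)
\]
for $\psi$ in a suitable dense subalgebra of $D(\EO)\cap L^\infty$; in our case $\psi\in\mathrm{Lip}(\mo)$ (bounded) will do, since by Lemma~\ref{lem:mult} such $\psi$ are multipliers of $W^{1,2}(\mo,\gamma)$ and $\varphi^2\in W^{1,2}(\mo,\gamma)$ whenever $\varphi\in W^{1,2}(\mo,\gamma)\cap L^\infty$.

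First I would treat the case $\varphi\in W^{1,2}(\mo,\gamma)\cap L^\infty(\mo,\gamma)$, which is dense. For such $\varphi$, combining the displayed characterization of the energy measure with the ``hypoth\`ese de repr\'esentabilit\'e'' (the Proposition following Lemma~\ref{lem:mult}) gives
\[
\int_{\obar}\psi\,d\mu_{\langle M^{[\varphi]}\rangle}
 = \int_{\mo}[\Dho\varphi,\Dho\varphi]_H\,\psi\,d\gamma
\]
for all bounded $\psi\in\mathrm{Lip}(\mo)$. Since such $\psi$ are dense enough to separate measures on $\obar$ (this is exactly the kind of statement underlying quasi-regularity — cf.\ the separation property used in Lemma~\ref{lem:closable} and the definition of quasi-regular form), it follows that $\mu_{\langle M^{[\varphi]}\rangle}$ is the measure $[\Dho\varphi,\Dho\varphi]_H\cdot\gamma|_\mo$. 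By the Revuz correspondence recalled in Section~2, the PCAF associated with this measure is precisely $t\mapsto\int_0^t[\Dho\varphi(X_s),\Dho\varphi(X_s)]_H\,ds$ (this uses that $X_t\in\mo$ for a.e.\ $t$, up to the relatively polar boundary, so that the integrand agrees r.q.e.\ with the energy density). Hence $\langle M^{[\varphi]}\rangle_t=\int_0^t[\Dho\varphi(X_s),\Dho\varphi(X_s)]_H\,ds$ for bounded $\varphi$.

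For general $\varphi\in W^{1,2}(\mo,\gamma)$ I would pass to the limit using truncation: put $\varphi_n:=(-n)\vee\varphi\wedge n$, which lies in $W^{1,2}(\mo,\gamma)$ by Corollary~\ref{cor:maxderivative} and Lemma~\ref{lem:order}, with $\Dho\varphi_n=1_{\{|\varphi|<n\}}\Dho\varphi\to\Dho\varphi$ in $L^2(\mo,\gamma;H)$ and $\varphi_n\to\varphi$ in $W^{1,2}(\mo,\gamma)$. Since the map $\varphi\mapsto M^{[\varphi]}$ is linear and continuous from $W^{1,2}(\mo,\gamma)$ into the space of MAFs of finite energy with the energy norm (one has $\mathbf e(M^{[\varphi]})=\tfrac12\EO(\varphi,\varphi)$), we get $M^{[\varphi_n]}\to M^{[\varphi]}$ in energy, and the associated bracket measures converge; passing to a subsequence we may assume the convergence holds $P_z$-a.s.\ uniformly on compact time intervals for r.q.e.\ $z$. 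On the other side, $\int_0^t[\Dho\varphi_n(X_s),\Dho\varphi_n(X_s)]_H\,ds\to\int_0^t[\Dho\varphi(X_s),\Dho\varphi(X_s)]_H\,ds$ by the Revuz correspondence applied to the convergence $[\Dho\varphi_n,\Dho\varphi_n]_H\cdot\gamma|_\mo\to[\Dho\varphi,\Dho\varphi]_H\cdot\gamma|_\mo$ in total variation. Identifying the limits yields the claim in general.

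The main obstacle I anticipate is the bookkeeping around the transfer method: all of the above — the energy-measure characterization, the Revuz correspondence, and the identification of brackets of MAFs — is classical for regular Dirichlet forms on locally compact spaces, and one must be careful that each of these statements survives the passage to $\hat Y$ and back, in particular that the exceptional sets involved are genuinely $\EO$-exceptional (equivalently, relatively polar) and that the additive functionals on $\obar$ correspond correctly to those on $\hat Y$. Since this transfer is exactly what was set up in Section~2 (following \cite{MR}) and was already used for the case $\mo=E$ in \cite[Proposition 4.5]{AR}, I would simply invoke it, noting as the paper does that ``the proof is still the same in our framework'' and that no regularity of $\mo$ is needed at this stage.
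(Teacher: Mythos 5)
Your proposal is correct and follows essentially the same route as the paper's proof: both identify the energy measure of $M^{[\varphi]}$ via the ``hypoth\`ese de repr\'esentabilit\'e'' (Fukushima's Theorem 5.2.3) applied to the truncations $\varphi_n=(-n)\vee\varphi\wedge n$, pass to the limit using the continuity of the energy-measure map in $\EO$-norm, and then invoke the uniqueness in the Revuz correspondence together with the transfer to the compactification. The only cosmetic difference is that the paper carries out the limit at the level of the measures $\hat{\gamma}_{\langle\varphi_n\rangle}$ via an explicit inequality from Fukushima's Lemma 5.4.6, whereas you phrase it as convergence of the MAFs in energy; these amount to the same estimate.
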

\begin{proof}Recall that we are always considering $\mathscr E_\mo$ as a form on $L^2(\overline\mo,m)$ as done in
Section \ref{sec:gauss}. Endowing $\overline\mo$ with the topology induced by the separable Banach space $E$,
$\overline\mo$ is a Polish space. We define now the function $\theta$ as follow,
\begin{equation}
\theta(z):=
\begin{cases}
 [\Dho\varphi(z),\Dho\varphi(z)]_H & \text{ if } z\in\obar\\
 0                                 &  \text{ if } z\in\hat{\obar}\setminus\obar
\end{cases}
\end{equation}
and $\hat{N}_t:=\int_0^t\theta(\hat{X}_s)ds,t\geq 0$. Then it follows by \cite[Lemma 5.1.6 and Theorem 3.2.3]{F80}
that
\[
\hat{P}_z[\hat{N}_t<\infty,t\geq 0]=1
\]
for $\widehat{\text{r.q.e.}}\text{ } z\in\hat{\obar}$. Consequently, $(\hat{N}_t)_{t\geq 0}$ is a CAF of
$\hat{\mathbf M}$ and we have for $f:\hat{\obar}\rightarrow[0,\infty[,\mathscr B(\hat{\obar})-$measurable, that
\begin{equation}
\begin{split}
\frac{1}{t}\int_{\hat{\obar}}\hat{E}_z\left[\int_0^tf(\hat{X}_s)d\hat{N}_s\right]d\hat{\gamma}&=
                                   \frac{1}{t}\int_0^t\int_{\hat{\obar}}\hat{p}_s(f\theta)d\hat{\gamma}ds\\
                                   &=\frac{1}{t}\int_0^t\int_{\hat{\obar}}f\theta\hat{p}_s1d\hat{\gamma}ds\\
                                   &=\int_{\hat{\obar}}f\theta d\hat{\gamma}
\end{split}
\end{equation}
where the last step follows by the fact that $(X_t)_{t\geq 0}$ is markovian and then so is $(\hat X_t)_{t\geq 0}$
thus $\hat{p}_s1=1 \, \hat{\gamma}-$a.e. By \cite[Theorem 5.1.3]{F80} it follows that the unique
smooth measure that is associated to $\hat{N}:=(\hat{N}_t)_{t\geq 0}$ is $\theta\hat{\gamma}$. For
$\varphi\in D(\hat{\mathscr E}_{\mathcal O})$ let $\hat{\gamma}_{\langle\varphi\rangle}$ denote the unique
smooth measure associated with $\langle\hat{M}^{[\varphi]}\rangle$. We want to show also that
\[
\hat{\gamma}_{\langle\varphi\rangle}=\theta\hat{\gamma}\
\]
By \cite[Theorem 5.2.3]{F80} we know that if $\varphi_n:=\sup(\inf(\varphi,n),-n),\text{ }n\in\mathbb N$,
then for all $f\in D(\hat{\mathscr E}_{\mathcal O})\cap L^{\infty}(\obar,m)$
 \[
 2\hat{\mathscr E}_{\mathcal O}(\varphi_n.f,u_n)-\hat{\mathscr E}_{\mathcal O}(\varphi_n^2,f)=\int_{\mo}f(z)
 [\Dho\varphi_n(z),\Dho\varphi_n(z)]_Hd\gamma
 \]
 Consequently, by \cite[Theorem 5.2.3]{F80}

 \begin{equation}\label{eq:varphi}
 \hat{\gamma}_{\langle\varphi_n\rangle}(dz)=[\Dho\varphi_n(z),\Dho\varphi_n(z)]_H\hat{\gamma}(dz)
 \end{equation}
Since by \cite[Proof of Lemma 5.4.6]{F80}
 \[
 \left(\left(\int |f|d\hat{\gamma}_{\langle\varphi\rangle}\right)^{\frac{1}{2}}-
 \left(\int |f|d\hat{\gamma}_{\langle\varphi_n\rangle}\right)^{\frac{1}{2}}\right)^2\leq
 2\|f\|_{\infty}\hat{\mathscr E}_{\mathcal O}(\varphi-\varphi_n,\varphi-\varphi_n),
 \]
\eqref{eq:varphi} implies that $\hat{\gamma}_{\langle
\varphi\rangle}=\theta\hat{\gamma}$. By uniqueness part of
\cite[Theorem 5.1.3]{F80} we now have that
$\langle\hat{M}^{[\varphi]}\rangle=\hat{N}$, hence clearly
\[
\langle M^{[\varphi]}\rangle_t=\int_0^t[\Dho\varphi(X_s),\Dho\varphi(X_s)]_Hds, t\geq 0
\]
and the Theorem is proven.
\end{proof}

\begin{remark}
Here we denote with  $\widehat{.}$  what is denoted in \cite[Chapter V]{F80} by $.^{\sharp}$)
\end{remark}

Now we focus on the CAF of zero energy $N^{[\varphi]}$ for
$\varphi\in W^{1,2}(\mo,\gamma)$. Here one cannot use the same
procedure as for the case $\mo=E$ in \cite{AR}. To evaluate
$N^{[\varphi]}$ we shall characterize, as in the regular Dirichlet
forms framework, the boundedness of its variation which is an easy
task by using the transfer method (see Lemma \ref{lem:bv}).

An additive functional (AF) $A$ is then said to be of bounded
variation, if $A_t(\omega)$ is of bounded variation in $t$ on each
compact subinterval of $[0,\xi(\omega)[$ for every fixed $\omega$ in
a defining set of $A$, i.e. its total variation process
\[
|N|_t(\omega)=\sup\sum_{i=0}^{n-1}\|N_{t_i}(\omega)-N_{t_{i-1}}(\omega)\|_E
\]
is finite, where the supremum is taken over all finite partitions
$0=t_0<t_1<\dots<t_n=t<\xi(\omega)$.

Let $(\mathscr E,D(\mathscr E))$ a quasi-regular Dirichlet form on
$L^2(X,m)$ where $X$ is some Luzin space and $m$ a full support
measure on $X$. We have then the following Lemma,

\begin{lemma}\label{lem:bv}
The following two conditions are equivalent to each other for $\varphi\in D(\mathscr E)$

\begin{enumerate}
 \item  $N^{[\varphi]}$ is a CAF of bounded variation,
 \item there exist smooth measures $\nu^1$ and $\nu^2$ such that
 \begin{equation}\label{eq:bvthm}
 \mathscr E(u,v)=\langle\nu_k,\tilde v\rangle,\quad\forall v\in D(\mathscr E)_k
 \end{equation} for every $k$. Here $\nu_k$ is the restriction to $F_k$ of the difference $\nu^1-\nu^2$. $\{F_k\}$
 being the common nest associated with $\nu^1$ and $\nu^2$. $D(\mathscr E)_k$ is the space defined by
 \[
 D(\mathscr E)_k:=\{\varphi\in D(\mathscr E):\tilde{\varphi}=0 \text{ q.e. on }E\setminus F_k\}
 \]
\end{enumerate}

\end{lemma}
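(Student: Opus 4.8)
The plan is to transfer the problem to the regular Dirichlet form setting via the local compactification method recalled in Section 2, where the corresponding equivalence is classical (see \cite[Theorem 5.4.2]{F80} and the characterization of CAFs of bounded variation in terms of the Beurling--Deny-type decomposition). First I would apply the transfer procedure: choosing an $\mathscr E$-nest $(X_n)$ of compact metrizable sets, one obtains a locally compact separable metric space $\hat X$ containing $Y:=\bigcup_n X_n$ as a dense subset and a regular Dirichlet form $(\hat{\mathscr E}, D(\hat{\mathscr E}))$ on $L^2(\hat X, \hat m)$ that is the image of $(\mathscr E, D(\mathscr E))$; the associated Hunt process $\hat{\mathbf M}$ is properly associated with $\hat{\mathscr E}$ and its Fukushima decomposition $\tilde\varphi(\hat X_t)-\tilde\varphi(\hat X_0)=\hat M_t^{[\varphi]}+\hat N_t^{[\varphi]}$ corresponds, under the identification of AFs, to the one of $\mathbf M$. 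In particular $N^{[\varphi]}$ has bounded variation if and only if $\hat N^{[\varphi]}$ does, and smooth measures of $\mathscr E$ correspond bijectively to smooth measures of $\hat{\mathscr E}$, so it suffices to prove the equivalence for the regular form $\hat{\mathscr E}$ on the locally compact space $\hat X$.

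Next, in the regular case, I would invoke the classical result: for $\varphi \in D(\hat{\mathscr E})$, the CAF $\hat N^{[\varphi]}$ is of bounded variation if and only if there is a signed smooth measure $\mu = \nu^1 - \nu^2$ (difference of positive smooth measures sharing a common nest $\{F_k\}$) such that $\hat{\mathscr E}(\varphi, v) = \langle \mu, \tilde v\rangle$ for all $v \in D(\hat{\mathscr E})_b$ with $\tilde v$ having compact support contained in some $F_k$; this is essentially \cite[Theorem 5.4.2]{F80} together with the decomposition of $N^{[\varphi]}$ into its positive and negative variation parts, each of which (being a PCAF) corresponds via the Revuz correspondence to a unique smooth measure. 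The forward implication uses that when $\hat N^{[\varphi]}$ has bounded variation one writes $\hat N^{[\varphi]} = A^1 - A^2$ with $A^i$ PCAFs, lets $\nu^i$ be their Revuz measures, and identifies $\hat{\mathscr E}(\varphi,v)$ with $-\int \tilde v\, dN^{[\varphi]}$-type expressions via the formula relating $\hat{\mathscr E}(\varphi, v)$ to the energy measure and the CAF $\hat N^{[\varphi]}$ (the integration-by-parts / Lyons--Zheng type identity in \cite[Chapter 5]{F80}); the converse uses \cite[Theorem 5.1.3]{F80} to build the PCAFs $A^i$ from $\nu^i$ and checks that $A^1 - A^2 = \hat N^{[\varphi]}$ by verifying that $\hat M_t^{[\varphi]} := \tilde\varphi(\hat X_t) - \tilde\varphi(\hat X_0) - (A_t^1 - A_t^2)$ is a MAF of finite energy, then appealing to uniqueness in the Fukushima decomposition.

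Finally I would transfer the statement back: the common nest $\{F_k\}$ for $\nu^1,\nu^2$ on $\hat X$ can be taken inside $Y$, hence restricts to a nest in $X$, the spaces $D(\hat{\mathscr E})_k$ correspond to the $D(\mathscr E)_k$ defined in the statement, and \eqref{eq:bvthm} follows. The main obstacle I anticipate is not the regular-case argument itself (which is standard) but ensuring that the transfer is compatible at the level of \emph{additive functionals of zero energy and their total variation} — i.e. that the correspondence $\mathbf M \leftrightarrow \hat{\mathbf M}$ respects the decomposition $N^{[\varphi]} = A^1 - A^2$ and the pairing $\langle \nu_k, \tilde v\rangle$, and that the relevant classes of test functions $v$ (those in $D(\mathscr E)_k$) are rich enough after transfer. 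This requires carefully citing \cite[Theorem VI.2.5]{MR} (or \cite[Theorem 4.3]{AR}) for the additive functional correspondence and checking that bounded variation is preserved under the identification of AFs, which is where the bulk of the verification lies.
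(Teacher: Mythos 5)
Your proposal follows essentially the same route as the paper: the paper's proof also transfers the problem to the compactified setting via \cite[Theorem V.1.6]{MR}, notes that bounded variation and the PCAF correspondence are preserved under the extension of additive functionals, applies Fukushima's classical characterization (\cite[Theorem 5.3.2]{F80}, the 1980-edition numbering of the result you cite) to the regular form $\hat{\mathscr E}$, and then restricts the resulting smooth measures back to $\mathscr B(E)$ using \cite[Theorem 1.2, Corollary 1.4 and Proposition 1.5]{MR}. The only difference is that you sketch a proof of the regular-case equivalence, whereas the paper simply cites it; otherwise your plan, including the identification of the transfer compatibility as the main point requiring care, matches the paper's argument.
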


\begin{proof}
By \cite[Theorem V. 1.6]{F80} we may extend $M$ on $E$ to a Hunt
process $\hat M$ on $\hat E$. Every PCAF $(A_t)_{t\geq 0}$ can be
extended (e.g. by zero) to a PCAF $(\hat A_t)_{t\geq 0}$ of $\hat M$
and vis versa. Let $\varphi\in D(\mathscr E)$, we denote by
$\hat{\varphi}$ the extension by zero on $\hat E\setminus E $ of
$\varphi$, and we suppose that $N^{[\varphi]}$ is of bounded
variation, thus $\hat N^{[\hat{\varphi}]}$ is also of bounded
variation, then by \cite[Theorem 5.3.2]{F80} there exist smooth
measures $\hat\nu^1$ and $\hat\nu^2$ such that
\[
\hat{\mathscr E}(\hat{\varphi},\hat{\psi})=\langle\hat{\nu}_k,\widetilde{\hat{\psi}}\rangle,\quad\forall\hat{\psi}
\in D(\hat{\mathscr E})_k
\]for all $k$ and where $\hat{\nu}_k$ is the restriction to $\hat F_k$ of the difference $\hat{\nu}^1-\hat{\nu}^2$.
$\{\hat F_k\}_k$ being the common nest associated with $\hat{\nu}^1$ and $\hat{\nu}^2$ and
\[
D(\hat{\mathscr E})_k:=\{\hat{\varphi}\in D(\hat{\mathscr E}):\widetilde{\hat{\varphi}}=0
\text{ }\widehat{\text{q.e.}}\text{ on }\hat E\setminus{\hat F_k}\}
 \]

It suffice now to choose $\nu^1=\hat{\nu}^1_{|\mathscr B(E)}$ and $\nu^2=\hat{\nu}^2_{|\mathscr B(E)}$ and by
\cite[Theorem 1.2, Corollary 1.4 and Proposition 1.5 p:174-176]{MR} one can come back to \eqref{eq:bvthm}. The
converse follows with the same transfer technic.

\end{proof}

We want now to give a componentwise Skorohod decomposition, but a
technical problem arise since the indexation on the derivatives is
on $H(\gamma)$ but the one of the component process $(\langle
k,X_t\rangle)_{t\geq 0}$ of the $E-$valued process $(X_t)_{t\geq 0}$
are on $E'$. This problem can easily be surrounded by the following
procedure: First of all recall that $H(\gamma)\hookrightarrow E$
continuously and densely. By identifying $H(\gamma)$ and
$H(\gamma)'$ we have that
\[
E'\hookrightarrow H(\gamma)\hookrightarrow E
\]
continuously and densely in both embeddings. Let $j_H:E'\rightarrow
H(\gamma)$ to be the left embedding. Thus for all $l\in E'$, the
functional $h\rightarrow ~_{E'}\langle l,h\rangle_E$ is continuous
in $H(\gamma)$. Hence there exists a unique $j_H(l)\in H(\gamma)$
such that
\begin{equation}\label{eq:jh}
 ~_{E'}\langle l,h\rangle_E=[j_H(l),h]_H
\end{equation}

Note that as $H(\gamma)=R_{\gamma}(E'_{\gamma})$, one can write $j_H$ explicitly as follow: $j_H(l)=R_{\gamma}(l)$ for all
$l\in E'$. Since $\gamma$ is centered,  $E'_{\gamma}$ has a countable
orthonormal basis, consisting of continous linear functionals $l_k,\,k\in\mathbb N$ \cite[Corollary 3.2.8]{Bo}.
Let $K=\mathrm{span}\{l_k\in E'\,:\,k\in\mathbb N\}\subset E'$ thus
$\{h_k:=j_H(l_k)\,:\,k\in\mathbb N\}$ forms an orthonormal basis of $H(\gamma)$ (eventually after applying
Gram-Schmidt orthogonalisation). Note that, by Hahn-Banach theorem, $E'$ separates the points of $E$, and
since $K$ is dense in $E'$, then $K$ also separates the points of $E$.

 Now after what is done before, one can always identify $E'\times H(\gamma)$ with $H(\gamma)\times H(\gamma)$ with
 help of the map $j_H$ defined by \eqref{eq:jh}, which means that one can consider the dualisation
 $~_{E'}\langle,\rangle_E$ to coincide with $[,]_H$ when restricted to $E'\times H(\gamma)$. In this situation
 one have a countable subset $K_0=\{l_k,\,k\in\mathbb N\}$ of $E'$ forming an orhonormal basis of $H(\gamma)$
 and separating the points of $E$. Moreover the linear span $K\subset E'$ of $K_0$ is dense in $H(\gamma)$. In this
 and the following sections we fix $K$ and the orthonormal basis $K_0$ of $H(\gamma)$ defined as above.

Now to establish the componentwise Skorohod representation we need
to use the integration by parts in Theorem \eqref{thm:ibp}. We
consider then, in what follow, open sets of the form $\mathscr
O=\{x\in E\,|\, G(x)<0\}$ where $G$ satisfies assumptions
\ref{Hypo}. We define the following coordinate functions: For $l\in
K$, with $|l|_H=1$, define
\[
\varphi_l(z)=~_{E'}<l,z>_E, z\in E
\]

The functions $\varphi_l$ are continuous Lipschitz functions on the
whole $E$, thus the functions ${\varphi_l}_{|\mo}$ are Lipschitz
continuous functions on $\mo$ and belong to $W^{1,2}(\mo,\gamma)$.

\begin{theorem}\label{thm:skorohod}
In the case where $\varphi=\varphi_l$, the Fukushima decomposition of $M^{[\varphi]},\,\varphi\in W^{1,2}(\mo,\gamma)$ in
Theorem \ref{thm:fuku} becomes as follow:
\begin{equation}
\varphi_l(X_t)-\varphi_l(X_0)=W^l_t+\int_0^t\hat
l(X_s)ds+\int_0^t\nu^l_{ G}(X_s)dL_s^{\rho}
\end{equation}
where for all $z\in\obar\setminus S_l$ for some relative polar set
$S_l\subset\obar$ the continous martingale $(W_t^l,\mathscr F_t,
P_z)_{t\geq 0}$ is a one dimensional Brownian motion starting at
zero, $\hat l$ is the element generated by $l$,
\[\nu^l_G=\frac{D_H^lG}{|D_H G|_H}\]
plays the role of the outward normal vector field in the direction
of $l$ and $L^{\rho}_t$ is the positive continuous additive
functional associated with the Gaussian-Hausdorff measure $\rho$ by
Revuz correspondence. Moreover, $L_t^{\rho}$ verify
\begin{equation}\label{eq:L}
 \int_0^t1_{\partial\mo}(X_s)\,dL_s^{\rho}=L_t^{\rho}.
\end{equation}

\end{theorem}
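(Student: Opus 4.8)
The plan is to apply Fukushima's decomposition (Theorem~\ref{thm:fuku}) to the coordinate function $\varphi_l$, identify the martingale part via Proposition~\ref{pro:M}, and identify the zero-energy part $N^{[\varphi_l]}$ as the sum of an absolutely continuous drift and a boundary local-time term by combining Lemma~\ref{lem:bv} with the integration by parts formula of Theorem~\ref{thm:ibp}. First I would note that $\varphi_l\in\mathrm{Lip}(E)$, so $\Dho\varphi_l = j_H(l) = h_l$ (the element of $H(\gamma)$ with $[h_l,\cdot]_H = {}_{E'}\langle l,\cdot\rangle_E$), which has $|h_l|_H = |l|_H = 1$. Hence by Proposition~\ref{pro:M},
\[
\langle M^{[\varphi_l]}\rangle_t = \int_0^t [h_l, h_l]_H\, ds = t,
\]
so $(M^{[\varphi_l]}_t,\mathscr F_t, P_z)_{t\geq 0}$ is a continuous martingale additive functional with bracket $t$; by Lévy's characterization (valid here after the transfer to a Hunt process, and for $P_z$ with $z$ outside a suitable relatively polar set $S_l$), $W^l := M^{[\varphi_l]}$ is a one-dimensional Brownian motion starting at zero. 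This gives the first term.

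Next I would analyze $N^{[\varphi_l]}$. Using the integration by parts Theorem~\ref{thm:ibp} with the basis element $v_k = h_k = j_H(l_k)$ and writing $l = \sum_k c_k l_k$ (finite sum, since $l\in K$), one obtains for all test functions $\psi$ the identity
\[
\mathscr E_{\mo}(\varphi_l,\psi) = \int_\mo [\Dho\varphi_l, \Dho\psi]_H\, d\gamma
 = -\int_\mo \hat l\, \psi\, d\gamma - \int_{\partial\mo} \nu^l_G\, \mathrm{Tr}\,\psi\, d\rho,
\]
where $\hat l = \sum_k c_k \hat{v}_k$ is the element generated by $l$ and $\nu^l_G = D_H^lG/|D_H G|_H$; note $1/|D_HG|_H \in L^q_{\mathrm{loc}}$ near $\partial\mo$ by Assumption~\ref{Hypo}(A.3), so $\nu^l_G\,\mathrm{Tr}\,\psi \in L^1(\partial\mo,\rho)$. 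This exhibits $\mathscr E_{\mo}(\varphi_l,\cdot)$ as integration against the signed measure $\mu^l := -\hat l\,\gamma - \nu^l_G\,\rho$ restricted to $\partial\mo$ (more precisely against $\nu^l_G\,\rho\llcorner\partial\mo$), which decomposes as a difference of two smooth measures: $|\hat l|\gamma$ is smooth by Fernique and the capacity bound $\gamma(A)\leq\Cap(A)$, and $\rho$ is smooth by the Proposition stating that $\rho$ charges no relatively polar set. By Lemma~\ref{lem:bv}, $N^{[\varphi_l]}$ is therefore a CAF of bounded variation, and by the Revuz correspondence it equals $N^{[\varphi_l]}_t = -\int_0^t \hat l(X_s)\,ds - \int_0^t \nu^l_G(X_s)\,dL^\rho_s$, where $L^\rho$ is the PCAF associated with $\rho$. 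Combining, $\varphi_l(X_t) - \varphi_l(X_0) = W^l_t - N^{[\varphi_l]}_t$; absorbing signs (the Ornstein--Uhlenbeck drift appears as $+\int_0^t \hat l(X_s)\,ds$ because $\beta^\gamma_h = -\hat h$, cf.\ subsection~2.2) yields the stated formula. Finally, \eqref{eq:L} follows because $\rho$ is concentrated on $G^{-1}(0) = \partial\mo$, so its Revuz-associated PCAF $L^\rho$ increases only on the set $\{X_s\in\partial\mo\}$; equivalently, $\int_0^t 1_{\partial\mo}(X_s)\,dL^\rho_s$ and $L^\rho_t$ have the same Revuz measure $\rho$, hence coincide.

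The main obstacle I anticipate is the step identifying $\mathscr E_{\mo}(\varphi_l,\cdot)$ with integration against a smooth signed measure in the precise form required by Lemma~\ref{lem:bv}: one must check that the trace operator $\mathrm{Tr}$ of Definition~\ref{def:Tr} is compatible with the relatively quasi-continuous representatives (so that $\mathrm{Tr}\,\psi = \tilde\psi|_{\partial\mo}$ $\rho$-a.e., which is exactly the content of the Proposition at the end of Section~4 but needs $\psi$ in the right class), that the local integrability in Assumption~\ref{Hypo}(A.3) genuinely makes $\nu^l_G\,\rho$ a difference of smooth measures with a common nest (not merely $\sigma$-finite), and that the decomposition $\mu^l = \nu^1 - \nu^2$ can be arranged along a single nest $\{F_k\}$ shared with the absolutely continuous part. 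The passage from the analytic identity to the pathwise Revuz representation, and the transfer of all of this through the local compactification as in Lemma~\ref{lem:bv}, is routine but must be carried out carefully for the signed (not merely positive) case.
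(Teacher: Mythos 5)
Your proposal takes essentially the same route as the paper's proof: compute $\langle M^{[\varphi_l]}\rangle_t=t$ via Proposition \ref{pro:M} and apply L\'evy's characterization, identify the Revuz measure of $N^{[\varphi_l]}$ as $\hat l\,\gamma+\nu_G^l\,\rho$ by combining Lemma \ref{lem:bv} with the integration by parts formula of Theorem \ref{thm:ibp}, and obtain \eqref{eq:L} from the Revuz correspondence and the fact that $\rho$ is carried by $\partial\mo$ (the paper cites \cite[Theorem 5.1.3]{F80} for this last point). The only discrepancy is your pair of compensating minus signs --- in the identity for $\mathscr E_{\mo}(\varphi_l,\psi)$, which as written contradicts Theorem \ref{thm:ibp}, and again in writing the decomposition as $W^l_t-N^{[\varphi_l]}_t$ rather than $M^{[\varphi_l]}_t+N^{[\varphi_l]}_t$ as in Theorem \ref{thm:fuku} --- which cancel, so the final formula agrees with the paper's.
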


 \begin{proof}
 By Lemma \ref{lem:bv} the AF $N^{[\varphi_l]}$ is of bounded variation and its associated measure
 $\gamma^{[\varphi_l]}$ is uniquely characterized by the equation
 \[
 \int_{\mo}[\Dho\varphi_l,\Dho\psi]_Hd\gamma=\int_{\obar}\psi d\gamma^{[\varphi_l]}
 \]
 for a relatively quasi-continuous function $\psi\in W^{1,2}(\mo,\gamma)$. By the integration by part formula in Lemma
 \ref{thm:ibp} we have

 \begin{equation}
 \begin{split}
\int_{\obar}\psi d\gamma^{[\varphi_l]} &=\int_{\mo}[\Dho\varphi_l,\Dho\psi]_H d\gamma\\
                                       &=\int_{\mo}[l,\Dho\psi]_H d\gamma\\
                                       &=\int_{\mo}D_l^{\mo}\psi d\gamma\\
                                       &=\int_{\mo}\hat{l}\psi d\gamma+\int_{\partial\mo}
                                        \frac{D^l_HG}{|D_H G|_H}\psi d\rho
  \end{split}
 \end{equation}
 which allows us to identify the measure $\gamma^{[\varphi_l]}$ associated to $N^{[\varphi_l]}$, i.e.

 \[
 \gamma^{[\varphi_l]}(dz)=\hat{l}(z)\gamma(dz)+n^l_G(z)\rho(dz)
 \]
 where $\rho$ is the Hausdorff-Gauss measure and
 \[\nu^l_G=\frac{D_H^lG}{|D_H G|_H}\]
 plays the role of the outward normal vector field in the direction of $l$. Consequently, the CAF of zero energy
 $N^{[\varphi_l]}$ must be
 \[
 N^{[\varphi_l]}=\int_0^t\hat{l}(X_s)ds+\int_0^t \nu_G^l(X_s)d L_s^{\rho}
 \]
 where $L^{\rho}_t=(L_t^{\rho})_{t\geq 0}$ is the continous additive functional associated with $\rho$ by the Revuz
 correspondence and by \cite[Theorem 5.1.3, p. 129]{F80} the equality \eqref{eq:L} holds.

 By Proposition \ref{pro:M} we know that
 \begin{equation}\begin{split}
 \langle M^{[\varphi_l]}\rangle_t & =\int_0^t[\Dho\varphi_l(X_s),\Dho\varphi_l(X_s)]_Hds\\
                                  & =\int_0^t|l|_H ^2ds\\
                                  & =t
 \end{split}\end{equation}

 It follows by P. Levy's characterization of Brownian motion that $(M^{[\varphi_l]})_{t\geq 0}$ is an
 $(\mathscr F_t)_{t\geq 0}-$Brownian motion starting at zero under each $P_z$, $z\in\mo\setminus S_l$.

 \end{proof}

 Let $\{l_k,k\in\mathbb N\}$ the orthonormal basis of $H(\gamma)$ as defined above, then it is easy to see that, by
 Theorem \ref{thm:skorohod}, we have solved a certain system of stochastic differential equations. This is announced by
 the following Theorem,
 \begin{theorem}

 The stochastic process
 $(\{~_{E'}\langle l_k,X_t\rangle_E|k\in\mathbb N\},\mathscr F_t,P_z)$ solves, for r.q.e. $z\in\obar$,
 the following system of stochastic differential equations

 \begin{equation}\begin{cases}
dY_t^k&=dW_t^k+\hat l_k(Y_t^k)dt+n_G^k(Y_t^k)dL_t\\
     Y_0^k &=\langle k,z\rangle_{E',E}
 \end{cases},\quad k\in\mathbb N\end{equation}
 where $\{(W_t^k)_{\geq 0},k\in\mathbb N\}$ is a collection of independent one dimensional
 $(\mathscr F_t)_{t\geq 0}-$ Brownian motion starting at zero.
 \end{theorem}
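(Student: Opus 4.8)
The plan is to read the system directly off the componentwise decomposition of Theorem~\ref{thm:skorohod}, applied to each basis functional separately, and then to upgrade the individually constructed one-dimensional Brownian motions $W^{l_k}$ to a \emph{jointly independent} family by computing their mutual covariations and invoking the multidimensional version of P.\ L\'evy's characterization of Brownian motion.

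First I would apply Theorem~\ref{thm:skorohod} to each $l_k \in K_0$ (recall $|l_k|_H = 1$). Setting $Y^k_t := \varphi_{l_k}(X_t) = {}_{E'}\langle l_k, X_t\rangle_E$ and $W^k := M^{[\varphi_{l_k}]}$, that theorem provides a relatively polar set $S_{l_k} \subset \obar$ such that for every $z \in \obar \setminus S_{l_k}$,
\[
 dY^k_t = dW^k_t + \hat l_k(X_t)\,dt + \nu^{l_k}_G(X_t)\,dL^{\rho}_t, \qquad Y^k_0 = {}_{E'}\langle l_k, z\rangle_E,
\]
where $(W^k_t, \mathscr F_t, P_z)_{t\ge 0}$ is a one-dimensional Brownian motion starting at $0$ and $L^{\rho}$ is the PCAF Revuz-associated with $\rho$ (here $\hat l_k(X_t) = Y^k_t$ under the identifications fixed in the previous section). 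Putting $S := \bigcup_{k\in\mathbb N} S_{l_k}$, which is still relatively polar by the countable subadditivity of $\Cap$ (Proposition~\ref{prop:capprop}(5)), all these identities hold simultaneously under $P_z$ for r.q.e.\ $z \in \obar$. With $n^k_G := \nu^{l_k}_G$ and $L := L^{\rho}$ this is precisely the asserted system, so the only point left is the independence of the family $\{W^k\}_{k\in\mathbb N}$.

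For this, observe that $\varphi \mapsto M^{[\varphi]}$ is linear by the uniqueness in the Fukushima decomposition (Theorem~\ref{thm:fuku}), so applying the parallelogram identity $\langle M, N\rangle = \tfrac14(\langle M+N\rangle - \langle M-N\rangle)$ to the energy-finite martingale additive functionals $M^{[\varphi_{l_j}]}, M^{[\varphi_{l_k}]}$ together with Proposition~\ref{pro:M} for $\varphi_{l_j} \pm \varphi_{l_k} \in W^{1,2}(\mo,\gamma)$ (whose $\Dho$ is the constant $l_j \pm l_k$) gives
\[
 \langle W^j, W^k\rangle_t = \int_0^t [\Dho\varphi_{l_j}(X_s), \Dho\varphi_{l_k}(X_s)]_H\, ds = \int_0^t [l_j, l_k]_H\, ds = \delta_{jk}\, t,
\]
since $K_0$ is orthonormal in $H(\gamma)$. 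Fixing $n \in \mathbb N$, the process $(W^1, \dots, W^n)$ is then a continuous $(\mathscr F_t)$-local martingale vanishing at $0$ with covariation matrix $t\,\mathrm{Id}$; hence, by the multidimensional P.\ L\'evy characterization, it is an $n$-dimensional Brownian motion under $P_z$ for r.q.e.\ $z$, so in particular $W^1,\dots,W^n$ are mutually independent. Letting $n\to\infty$ shows that the whole countable family $\{W^k\}_{k\in\mathbb N}$ consists of independent one-dimensional $(\mathscr F_t)$-Brownian motions starting at $0$, which completes the proof.

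The step I expect to require the most care is the bookkeeping around exceptional sets in the covariation computation: one must check that Proposition~\ref{pro:M} and its polarization hold for $\varphi_{l_j}$, $\varphi_{l_k}$ and $\varphi_{l_j}\pm\varphi_{l_k}$ simultaneously, on the common complement of the associated relatively polar sets, so that the L\'evy characterization can genuinely be applied under a single measure $P_z$ for r.q.e.\ $z$. Once this is arranged, the remainder is a direct transcription of Theorem~\ref{thm:skorohod}.
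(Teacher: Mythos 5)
Your proposal is correct and follows essentially the same route as the paper: read the system off Theorem~\ref{thm:skorohod} componentwise, then use the linearity of $\varphi\mapsto M^{[\varphi]}$ together with Proposition~\ref{pro:M} to get $\langle W^j,W^k\rangle_t=t[l_j,l_k]_H=\delta_{jk}t$, and conclude independence via the multidimensional L\'evy characterization. Your explicit polarization step and the bookkeeping of the countable union of relatively polar sets are just careful elaborations of what the paper leaves implicit.
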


 \begin{proof}
 The result follows from Theorem \ref{thm:skorohod}, and the P. Levy's theorem. In fact, in virtue of the linearity
 of the map $\varphi\mapsto M^{[\varphi]}$ (cf. \cite[Corollary 1, p.139]{F80}) and Proposition \ref{pro:M} one can
 conclude that

 \[
 \langle W_t^k,W_t^{k'}\rangle_t=t[ l_k,l_{k'}]_H=t\delta_{k,k'},\quad t\geq 0
 \text{ and }k,k'\in\mathbb N
 \]
 which means that any vector process $\bar W = \{W_t^1,\dots,W_t^d\}$ is a $d-$dimensional
 $(\mathscr F_t)_{t\geq 0}-$Brownian motion starting at zero under $P_z$ for r.q.e. $z\in\obar$.

 \end{proof}

 \section{Skorohod decomposition}

 In the last section we had established the Skorohod decomposition for the components  $(X^k_t)_{t\geq 0}
 (k\in\mathbb N)$. Now we are interested in the Skorohod decomposition of the process $(X_t)_{t\geq 0}$. One remarks
 that passing from $(X^k_t)_{t\geq 0}$ to $(X_t)_{t\geq 0}$ is not trivial. In fact, a problem occur when one wants to find an
 $E-$valued Brownian motion $(W_t)_{t\geq 0}$ verifying $~_{E'}\langle l_k,W_t\rangle_E=W_t^k$ and a map
 $\hat l:E\rightarrow E$ such that $~_{E'}\langle l_k,\hat l\rangle_E=\hat l_k$. To do this we mainly follow the
 procedure developed in \cite[Section 6]{AR}. The procedure is based on the crucial technical lemma
 \cite[Lemma 6.1]{AR} that we present also here without proof and we refer to the above cited article for
 detailed one.

 Recall that $E$ is a separable Banach space and denote by $\|.\|_{E'}$ the operator norm on $E'$, we know
 then, by the Banach/Alaoglu-theorem, that
 \[
  B_n':=\{l\in E'|\,\|l\|_{E'}\leq n\},\, n\geq 0,
 \]
equipped with the weak$^*-$topology is compact . Moreover, it is metrizable by some metric $d_n$, hence
in particular separable. Let $D_n\subset K$ be a countable dense subset of $(B_n',d_n),~n\in\mathbb N$, such that
$D_n\subset D_{n+1}$ for every $n\in\mathbb N$. Let $\tilde{D}_n$ be the $\mathbb Q-$ linear span of $D_n$ and set
\begin{equation}\label{eq:Ddef}
 D:=\bigcup_{n\in\mathbb N}\tilde{D}_n
\end{equation}

\begin{lemma}\label{lem:D}
 Let $(\Omega,\mathscr A)$ be an arbitrary measurable space and let $D$ to be as in \eqref{eq:Ddef}. Now let
 $\alpha_l:\Omega\to\mathbb R,\, l\in D$, be $\mathscr A-$ measurable maps. Then there exists an
 $\mathscr A/\mathscr B(E)-$measurable map $\alpha:\Omega\to E$ such that
 \begin{equation}
  ~_{E'}\langle l,\alpha\rangle_E=\alpha_l\quad\text{ for all }l\in D
 \end{equation}
$P-$a.s. for every probability measure $P$ on $(\Omega,\mathscr A)$ satisfaying the following two conditions:
\begin{enumerate}
 \item[(i)] $l\to\alpha_l$ is $\mathbb Q-$linear $P-$a.s.
 \item[(ii)] There exists a probability measure $\nu_P$ on $(E,\mathscr B(E))$ such that
 \begin{equation}\label{eq:lem}
  \int\exp(i\alpha_l)d\, P=\int\exp(i~_{E'}\langle l,z\rangle_E)\nu_P(d\, z)\quad \text{ for all }l\in D
 \end{equation}

\end{enumerate}

\end{lemma}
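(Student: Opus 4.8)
The plan is to reconstruct the sought $E$-valued map from its ``coordinates'' $(\alpha_l)_{l\in D}$ by inverting the coordinate embedding of $E$ into $\mathbb{R}^D$, and to invoke the hypotheses (i)--(ii) only at the very end, to certify that these coordinates actually lie in the range of that embedding $P$-almost surely.

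First I would set up the coordinate map. Since $D$ is countable, $\mathbb{R}^D$ with the product topology is a Polish space, and its Borel $\sigma$-algebra is the one generated by the coordinate projections. Consider
\[
\Phi\colon E\longrightarrow\mathbb{R}^D,\qquad \Phi(x):=\bigl({}_{E'}\langle l,x\rangle_E\bigr)_{l\in D}.
\]
Each component $x\mapsto{}_{E'}\langle l,x\rangle_E$ is continuous, so $\Phi$ is continuous, hence Borel; and $\Phi$ is injective because $D$ separates the points of $E$ (as already observed for $K\supset D$ in the text: $D$ is dense in $E'$ for the topology used in its construction, and $l\mapsto{}_{E'}\langle l,x-y\rangle_E$ is weak$^*$ continuous, so if $x\neq y$ some $l\in D$ still separates them). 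By the Lusin--Souslin theorem, an injective Borel map between Polish spaces has Borel range and a Borel measurable inverse on that range; hence $I:=\Phi(E)$ is Borel in $\mathbb{R}^D$ and $\Phi^{-1}\colon I\to E$ is Borel. I would extend $\Phi^{-1}$ to a Borel map $\beta\colon\mathbb{R}^D\to E$ (say $\beta\equiv 0$ off $I$) and set $\alpha(\omega):=\beta\bigl((\alpha_l(\omega))_{l\in D}\bigr)$. Since each $\alpha_l$ is $\mathscr{A}$-measurable, $\omega\mapsto(\alpha_l(\omega))_{l\in D}$ is $\mathscr{A}/\mathscr{B}(\mathbb{R}^D)$-measurable, and composing with $\beta$ yields that $\alpha$ is $\mathscr{A}/\mathscr{B}(E)$-measurable.

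Next I would fix a probability measure $P$ satisfying (i)--(ii) and show that the law $\mu$ of $(\alpha_l)_{l\in D}$ under $P$ coincides with the law $\mu'$ of $\Phi$ under $\nu_P$, both viewed as Borel probability measures on $\mathbb{R}^D$. By a monotone-class argument and uniqueness of the finite-dimensional Fourier transform, it is enough to match the characteristic functionals of $\mu$ and $\mu'$ at all test vectors supported on finitely many coordinates, and, by continuity and density, it suffices to do so at rational such vectors. For $l_1,\dots,l_k\in D$ and $q_1,\dots,q_k\in\mathbb{Q}$, hypothesis (i) gives $\sum_j q_j\alpha_{l_j}=\alpha_{\sum_j q_j l_j}$ $P$-a.s.\ (note $\sum_j q_j l_j\in D$ since $D$ is a $\mathbb{Q}$-vector space), and then hypothesis (ii) gives
\begin{align*}
\int_\Omega \exp\Bigl(i\sum\nolimits_j q_j\alpha_{l_j}\Bigr)\,dP
&=\int_\Omega \exp\bigl(i\,\alpha_{\sum_j q_j l_j}\bigr)\,dP\\
&=\int_E \exp\Bigl(i\sum\nolimits_j q_j\,{}_{E'}\langle l_j,z\rangle_E\Bigr)\,\nu_P(dz),
\end{align*}
which is precisely the matching characteristic functional of $\mu'$. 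Hence $\mu=\mu'$. Since $\Phi(E)=I$ and $\nu_P$ is a probability measure on $E$, we obtain $\mu(I)=\mu'(I)=\nu_P(\Phi^{-1}(I))=1$; so for $P$-a.e.\ $\omega$ the point $(\alpha_l(\omega))_{l\in D}$ lies in $I$, i.e.\ equals $\Phi(x)$ for a (necessarily unique) $x\in E$, and since $\beta$ inverts $\Phi$ on $I$ we conclude $\alpha(\omega)=x$, whence ${}_{E'}\langle l,\alpha(\omega)\rangle_E=\alpha_l(\omega)$ for every $l\in D$.

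The step I expect to be the main obstacle is the descriptive-set-theoretic one: justifying, via the Lusin--Souslin theorem, that the coordinate embedding $\Phi$ has Borel range and a Borel inverse, since this is exactly what turns the pointwise, possibly highly irregular, inversion into a genuinely $\mathscr{A}/\mathscr{B}(E)$-measurable $E$-valued map $\alpha$. Everything that follows --- matching the two laws on $\mathbb{R}^D$ and deducing that the coordinates are realized by a single element of $E$ --- is the routine characteristic-function computation made possible by hypotheses (i) and (ii).
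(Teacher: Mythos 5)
Your argument is correct, but there is nothing in the paper to compare it against: the author explicitly states that Lemma \ref{lem:D} is quoted from \cite[Lemma 6.1]{AR} ``without proof''. So what you have produced is a self-contained proof of an imported result. The two pillars of your argument are sound. First, the descriptive-set-theoretic step: $\Phi(x)=({}_{E'}\langle l,x\rangle_E)_{l\in D}$ is a continuous injection of the Polish space $E$ into the Polish space $\mathbb R^D$ (injectivity because $D$ is weak$^*$-dense in each ball $B_n'$ and hence separates points), so by Lusin--Souslin its range $I$ is Borel and $\Phi^{-1}$ is Borel on $I$; extending by $0$ off $I$ gives a genuinely $P$-independent measurable $\alpha=\beta\circ(\alpha_l)_{l\in D}$, which is exactly what the lemma demands since the same $\alpha$ must serve every admissible $P$. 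Second, the identification of laws: hypothesis (i) converts finite rational linear combinations of the $\alpha_{l_j}$ into a single $\alpha_{\sum q_jl_j}$ with $\sum q_jl_j\in D$ (here you correctly use that $D=\bigcup_n\tilde D_n$ is a $\mathbb Q$-vector space), hypothesis (ii) matches the resulting characteristic functions with those of $\nu_P\circ\Phi^{-1}$ at rational arguments, and continuity plus density upgrades this to all real arguments, whence the two laws on $\mathbb R^D$ coincide and $\mu(I)=\nu_P(E)=1$. The original proof in \cite{AR} is more hands-on (it works with the metrized weak$^*$-compact balls $B_n'$ and the tightness built into condition (ii) rather than invoking Lusin--Souslin), but your route is cleaner and buys the measurable inverse in one stroke; the price is reliance on a nontrivial theorem of descriptive set theory, which is however entirely standard for Polish spaces.
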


Lemma \ref{lem:D} will be applied to construct an $E-$valued Wiener
process from the components $W_t^k$, but before let us make some
remarks.

\begin{remark}\label{rem:rem1}~

\begin{enumerate}
 \item[(a)] First of all, let us remark that the evaluation of the martingale part in the Fukushima decomposition
 is not 'disturbed' by whether we work on $E$ or on an open set $\mo$ of $E$. It is why the treatment of the
 martingale part is similar to the one in $E$ as we deal, in the both situations, with $E-$valued Wiener processes
 without any kind of reflection or perturbation. One can see it clealy from the componentewise process, where in
 both situation the martingal part give arise to a one dimensional Brownian motion.

 \item[(b)] One can say the same as in $(a)$ about $\hat l_k$, where $\{l_k\,:\, k\in\mathbb N\}$ is the orthonormal
 basis of the Cameron-Martin space $H(\gamma)$, defined in the last section and $\hat l_k$ is the element generated
 by $l_k$.

 \item[(c)] Note that if $W_t$ is a standard Wiener process in $\mathbb R^n$, then
for any unit vector $v\in\mathbb R^n$, the process $(v, W_t)$ is one dimensional Wiener. Hence
one might try to define a Wiener process in a separable Hilbert space $H$ as a
continuous process $W_t$ with values in H such that, for every unit vector $v\in H$,
the real process $(v,W_t)_H$ is Wiener. However, such a process does not exist if $H$ is infinite dimensional
(see section 7.2 in \cite{Bo}).
\end{enumerate}
\end{remark}

To get around the difficulty apearing in Remark \ref{rem:rem1} (c), let $j_H$ be as defined in \eqref{eq:jh} and
define
\begin{definition}
A continuous random
process $(W_t)_{t\geq 0}$ on $(\Omega,\mathscr F,P)$ with values in $E$ is called a Wiener process
associated with $H$ if, for every $l\in E'$ with $|j_H(l)|_H = 1$, the one dimensional process
$~_{E'}\langle l,W_t\rangle_E$ is Wiener.
\end{definition}

\begin{definition}
Let $\mathscr F_t,\, t > 0$, be an increasing family of $\sigma-$fields. A Wiener process
$(W_t)_{t\geq 0}$ is called an $(\mathscr F_t)_{t\geq 0}-$Wiener process if, for all $t, s\geq\tau$, the random
vector $W_t-W_s$, is independent of $W_\tau$, and the random vector $W_t$ is $\mathscr F_t-$measurable.
\end{definition}

In a more general framework where $E$ is a locally convex space, it follows by \cite[Proposition 7.2.2]{Bo}, that a
Wiener process exists precisly when there exists a Hilbert space $H$ continously and densely embedded into $E$. In
particular in our sitation where $E$ is a separable Banach space and $H(\gamma)$ is the relevent Cameron-Martin
space, then there exists by \cite[Proposition 7.2.3]{Bo} a Wiener process $(W_t)_{t\geq 0}$ associated with
$H(\gamma)$ such that the distribution of $W_1$ coincides with $\gamma$.

Here also and by the identification in the last section, the
definition of the $E-$valued Wiener (or Brownian motion) process can
be reformulated as follow: A continuous random process $(W_t)_{t\geq
0}$ on $(\Omega,\mathscr F,P)$ with values in $E$ is called a Wiener
process (or Brownian motion ) associated with $H(\gamma)$ if, for
every $l\in K$ with $|l|_H = 1$, the one dimensional process
$~_{E'}\langle l,W_t\rangle_E$ is Wiener.

Now remark that, in general, one can not apply Lemma \ref{lem:D} directly to the one dimensional Brownian motion
$W_t^k$ because of the duality product in \eqref{eq:lem}, which justify an extension assumption on the standard
Gaussian cylinder measure on $H(\gamma)$. More precisely, for $t>0$ let $\gamma_t$ denote the standard Gaussian
cylinder measure on $H(\gamma)$, then one have
\[
 \int_{H(\gamma)}\exp(i\langle h,k\rangle_H)\gamma_t(dk)=\exp(-\frac{1}{2}t|h|^2_H),\, h\in H(\gamma)
\]
and each $\gamma_t$ induces a finitely additive measure $\widetilde\gamma_t$ on the cylinder sets of $E$ defined by
\begin{equation}\label{eq:ext}
 \widetilde\gamma_t(A^E_{l_1,\dots,l_n}):=\gamma_t(A^H_{l_1,\dots,l_n})
\end{equation}
where $A^E_{l_1,\dots,l_n}:=\{z\in E\,|\, (~_{E'}\langle l_1,z\rangle_E,\dots,~_{E'}\langle l_n,z\rangle_E)\in A\}$
and $A^H_{l_1,\dots,l_n}:=\{h\in H(\gamma)\,|\, (\langle l_1,h\rangle_H,\dots,\langle l_n,h\rangle_H)\in A\},\,
l_1,\dots,l_n\in E', A\in\mathscr B(\mathbb R^n)$.

In \cite{AR}, the following essential assumption was considered,
\[
\text{Each } \widetilde\gamma_t,\, t>0, \text{(as in \eqref{eq:ext}) extends to a probability measure }
\gamma_t^* \text{ on } (E,\mathscr B(E)).
\]

In our situation we don't need such assumption, since the extension
exists always and it is unique, see Theorem 4.1 in \cite{Ku} and the
paragraph after its proof.

Now, before to apply Lemma \ref{lem:D} as in \cite[Theorem 6.2]{AR} to obtain an $E-$valued Brownian motion from the
componentwise one dimensional Brownian motions $W_t^k$ appearing in Theorem \ref{thm:skorohod}, let us first recall
this important result from \cite[Proposition 1]{Ho}, see also \cite[Theorem 5.1]{Ro}, which permits us to be sure of
the existence of a continous sample paths version of the process constructed by Lemma \ref{lem:D}.

\begin{lemma}\label{lem:ext}
 Let $(Y_t)_{t\in\mathbb R}$ be a mean zero Gaussian stochastic process on a probability space $(\Omega,\mathscr A,P)$
 taking values in a real separable Banach space $(X,\|.\|_X)$. Assume that
 \[
   \lim_{t\to s}E_P[\|Y_s-Y_t\|^2_X]=0,\quad\text{for each }t\in\mathbb R
 \]

 Let $f:\mathbb R^+\rightarrow\mathbb R^+$ be a continous, increasing function such that $f(0)=0$ and that
 \[
  \sup\{E_P[\|Y_s-Y_t\|_X^2]^{1/2}:\, s,t\in\mathbb R,\, |s-t|\leq r\}\leq f(r)
 \]

 Assume that
 \[
  \int_0^1\left(\mathrm{ln}\frac{2}{r}\right)^{1/2}\,df(r)
 \]

 Then for any $n\in\mathbb N$ there exists a constant $\theta_n>0$ and an $\mathscr A-$measurable
function $B_n:\Omega\rightarrow\mathbb R^+$ such that for all $s,t\in[ -n, n]$
\begin{equation}
 \|Y_s(\omega)-Y_t(\omega)\|_X\leq\theta_n\int_0^{2|s-t|}\left(\mathrm{ln}\frac{B_n(\omega)}{r^2}\right)^{1/2}\,df(r),
 \text{ for }P-\text{a.e. }\omega\in\Omega
\end{equation}
In particular, there exists a version $(\widetilde Y_t)_{t\in\mathbb R}$ of $(Y_t)_{t\in\mathbb R}$
(i.e. for each $t\in\mathbb R,\,Y_t=\widetilde Y_t\,P-$a.s. ) which has continous sample paths.

\end{lemma}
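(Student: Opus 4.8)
The plan is to deduce the pointwise estimate from the Garsia--Rodemich--Rumsey (GRR) inequality applied pathwise to $t\mapsto Y_t(\omega)$, the finiteness of the GRR double integral being supplied by a scale-invariant version of Fernique's theorem for Banach-valued Gaussian vectors. As a preliminary step I would replace $(Y_t)$ by a jointly $\mathscr B(\mathbb R)\otimes\mathscr A$-measurable modification, which exists because $Y$ is $L^2$-continuous and does not affect any of the conclusions.

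First I would observe that, since $(Y_t)$ is jointly Gaussian with values in the separable Banach space $X$, every increment $Z:=Y_t-Y_s$ is a centered Gaussian $X$-valued vector with $E_P\|Z\|_X^2\le f(|t-s|)^2$. Using the Borell--Tsirelson--Ibragimov--Sudakov concentration inequality together with the elementary comparisons between the weak variance, the median and $E_P\|Z\|_X$, one gets universal constants $c_0>0$ and $K_0<\infty$ such that $E_P\big[\exp\big(\|Z\|_X^2/(c_0\,E_P\|Z\|_X^2)\big)\big]\le K_0$ for \emph{every} centered Gaussian $X$-valued $Z$; combined with the hypothesis on $f$ this yields
\[
 E_P\Big[\exp\Big(\tfrac{\|Y_t-Y_s\|_X^2}{c_0\,f(|t-s|)^2}\Big)\Big]\le K_0,\qquad s,t\in\mathbb R.
\]

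Next, for fixed $n\in\mathbb N$ I would set
\[
 B_n(\omega):=\int_{-n}^{n}\!\!\int_{-n}^{n}\Big(\exp\Big(\tfrac{\|Y_t(\omega)-Y_s(\omega)\|_X^2}{c_0\,f(|t-s|)^2}\Big)-1\Big)\,ds\,dt,
\]
which by Tonelli's theorem is $\mathscr A$-measurable with $E_P[B_n]\le(2n)^2(K_0-1)<\infty$, hence finite $P$-a.s. Applying the Banach-valued GRR inequality on $[-n,n]$ with $\Psi(x)=e^{x^2/c_0}-1$ (so $\Psi^{-1}(y)=\sqrt{c_0\ln(1+y)}$) and $p=f$ gives, for $P$-a.e.\ $\omega$ and all $s,t\in[-n,n]$,
\[
 \|Y_t(\omega)-Y_s(\omega)\|_X\le 8\int_0^{|t-s|}\sqrt{c_0\,\ln\big(1+4B_n(\omega)/u^2\big)}\,df(u).
\]
Enlarging $B_n$ by a deterministic factor depending only on $n$ (so that $B_n(\omega)/u^2\ge 1$ on the relevant range) and absorbing $8\sqrt{c_0}$ into a constant $\theta_n$ brings this into the form stated in the lemma, the weaker upper limit $2|s-t|$ being harmless since the integrand is then nonnegative. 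Finally, the hypothesis $\int_0^1(\ln(2/r))^{1/2}\,df(r)<\infty$, together with $\sqrt{\ln(B_n/r^2)}\le\sqrt{\ln B_n}+\sqrt{2}\,\sqrt{\ln(1/r)}$ and $f(0^+)=0$, shows the right-hand side tends to $0$ as $|t-s|\to 0$; thus for $P$-a.e.\ $\omega$ the path $t\mapsto Y_t(\omega)$ is uniformly continuous on $[-n,n]$, and the resulting continuous extensions, being consistent on overlaps, patch together into a process $(\widetilde Y_t)_{t\in\mathbb R}$ with continuous sample paths. That $\widetilde Y$ is a version of $Y$ then follows from $L^2$-continuity: by Fubini $\widetilde Y_t=Y_t$ $P$-a.s.\ for Lebesgue-a.e.\ $t$, and fixing $t_0$ and choosing such $t_k\to t_0$ one has $\widetilde Y_{t_k}\to\widetilde Y_{t_0}$ $P$-a.s.\ while $Y_{t_k}\to Y_{t_0}$ in $L^2$, whence $\widetilde Y_{t_0}=Y_{t_0}$ $P$-a.s.

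The hard part will be the scale-invariant exponential integrability bound: because the hypothesis only gives $E_P\|Y_t-Y_s\|_X^2\le f(|t-s|)^2$ with no slack, one must produce the constants $c_0,K_0$ independently of the particular increment, and then carefully match the $\ln(1+4B_n/u^2)$ arising from GRR with the precise $\ln(B_n/r^2)$ in the statement. Everything else is routine bookkeeping: the measurability and $P$-a.s.\ finiteness of $B_n$, the passage to a jointly measurable modification, and the $L^2$-continuity argument upgrading ``continuous modification'' to ``version''.
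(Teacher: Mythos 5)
The paper does not actually prove this lemma: it is quoted verbatim from the references \cite[Proposition 1]{Ho} and \cite[Theorem 5.1]{Ro} and used as a black box, so your proposal is being compared against a citation rather than an argument. Your route --- a scale-invariant exponential moment bound for Banach-valued Gaussian vectors (via Borell--Tsirelson--Ibragimov--Sudakov, with $\sigma^2\le E\|Z\|_X^2$ and $m\le (E\|Z\|_X^2)^{1/2}$, giving universal $c_0,K_0$ with $E[\exp(\|Z\|_X^2/(c_0E\|Z\|_X^2))]\le K_0$) fed into the Garsia--Rodemich--Rumsey inequality with $\Psi(x)=e^{x^2/c_0}-1$ and $p=f$ --- is a legitimate self-contained proof and does reproduce the stated modulus $\theta_n\int_0^{2|s-t|}(\ln(B_n(\omega)/r^2))^{1/2}\,df(r)$ after the enlargement of $B_n$ you describe; the cited sources instead obtain this Fernique-style modulus by a direct dyadic chaining argument. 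What your approach buys is independence from the hard-to-access references and an explicit, integrable random constant $B_n$; what it costs is the two technical points you partly flag. The first (the universal exponential bound) does go through as you anticipate. The second deserves more care than you give it: GRR in its classical form requires the path $t\mapsto Y_t(\omega)$ to be \emph{continuous}, which is exactly what is being proved, so for the merely (jointly) measurable modification you only get the estimate for all $(s,t)$ outside a Lebesgue-null subset of $[-n,n]^2$; you must first use that a.e.\ statement on a countable dense set of times to build the continuous modification $\widetilde Y$, and only then does the displayed inequality hold for \emph{all} $s,t$ --- for $\widetilde Y$, not for $Y$ itself. Your closing paragraph essentially performs this repair, but the intermediate claim ``for $P$-a.e.\ $\omega$ and all $s,t\in[-n,n]$'' applied directly to $Y$ should be reordered accordingly. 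With that adjustment the argument is complete.
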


\begin{theorem}\label{thm:W}
 There exists a map $W:\Omega\to C([0,\infty[,E)$ having the following properties:
 \begin{enumerate}
  \item[(i)] $\omega\to W_t(\omega):=W(\omega)(t),\, \omega\in\Omega$, is $\mathscr F_t/\mathscr B(E)-$ measurable
  for $t\geq 0$.

  \item[(ii)] There exists a relatively polar set $S\subset E$ such that under each $P_z,\, z\in E\setminus S,\,
  W=(W_t)_{t\geq 0}$ is an $(\mathscr F_t)_{t\geq 0}-$Brownian motion on $E$ starting at $0\in E$ with covariance
  $[,]_H$

  \item[(iii)] For each $k\in\mathbb N,\, ~_{E'}\langle l_k,W_t\rangle=W_t^k,\, t\geq 0,\, P_z-$a.s. for all $z\in E$
  outside a relatively polar set (depending on $k$).
 \end{enumerate}

\end{theorem}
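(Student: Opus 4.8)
The plan is to follow the construction of \cite[Theorem 6.2]{AR}, adapted to the relative setting. For $l=\sum_k q_kl_k\in D$, with $D$ the countable $\mathbb Q$-subspace of $K$ from \eqref{eq:Ddef} and $q_k\in\mathbb Q$, put $W_t^l:=\sum_k q_kW_t^k$; by the linearity of $\varphi\mapsto M^{[\varphi]}$ (cf.\ \cite[Corollary 1, p.139]{F80}) this coincides with the martingale part of $\varphi_l$ in the Fukushima decomposition, so $l\mapsto W_t^l$ is $\mathbb Q$-linear $P_z$-a.s., which is hypothesis (i) of Lemma \ref{lem:D}. For hypothesis (ii) I would take $\nu_{P_z}:=\gamma_t^\ast$, the Borel measure on $(E,\mathscr B(E))$ obtained by extending the finitely additive set function $\widetilde\gamma_t$ of \eqref{eq:ext}; this extension exists and is unique by \cite[Theorem 4.1]{Ku}. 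Since, by the bracket computation of Proposition \ref{pro:M} together with Lévy's characterization, $(W_t^{l_1},\dots,W_t^{l_m})$ is, for r.q.e.\ $z$, an $m$-dimensional Brownian motion under $P_z$, it is centered Gaussian with covariance $t\,\mathrm{Id}$, whence $\int\exp(iW_t^l)\,dP_z=\exp(-\tfrac12 t|l|_H^2)=\int\exp(i\,{}_{E'}\langle l,z\rangle_E)\,\gamma_t^\ast(dz)$ for every $l\in D$. Lemma \ref{lem:D} then provides, for each rational $t\geq 0$, an $\mathscr F_t/\mathscr B(E)$-measurable map $\widehat W_t\colon\Omega\to E$ with ${}_{E'}\langle l,\widehat W_t\rangle_E=W_t^l$ for all $l\in D$, $P_z$-a.s., for all $z$ outside a relatively polar set.

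Next I would pass to a continuous $E$-valued version. Under $P_z$, with $z$ off the relatively polar sets already collected, the skeleton $(\widehat W_t)_{t\in\mathbb Q_{\geq 0}}$ has centered Gaussian finite-dimensional distributions (all $\mathbb Q$-linear combinations of the $W_t^{l_k}$ being jointly Gaussian), and $\widehat W_t-\widehat W_s$ has law $\gamma_{|t-s|}^\ast$, hence is distributed as $|t-s|^{1/2}$ times a $\gamma$-distributed vector; by Fernique's theorem $E_{P_z}[\|\widehat W_t-\widehat W_s\|_E^2]=C|t-s|$ with $C=\int_E\|x\|_E^2\,\gamma(dx)<\infty$. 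Taking $f(r)=\sqrt{Cr}$, the Dini-type integral in Lemma \ref{lem:ext} converges on each bounded interval, and that lemma supplies a continuous modification $(W_t)_{t\geq 0}$; it inherits $\mathscr F_t/\mathscr B(E)$-measurability from $\widehat W_t$ together with the right-continuity of $(\mathscr F_t)$, which gives (i), and $W_0=0$ in $E$ since $\widehat W_0=0$. Because, for each fixed $k$, the maps $t\mapsto{}_{E'}\langle l_k,W_t\rangle_E$ and $t\mapsto W_t^k$ are both continuous and coincide at every rational $t$ on a set of full $P_z$-measure, they coincide for all $t$; this is (iii).

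It remains to verify (ii). For $l\in D$ with $|l|_H=1$, the process ${}_{E'}\langle l,W_t\rangle_E=W_t^l$ is a one-dimensional $(\mathscr F_t)$-Brownian motion by Theorem \ref{thm:skorohod} applied to $\varphi_l$ and the linearity of $\varphi\mapsto M^{[\varphi]}$. For a general $l\in E'$ with $|j_H(l)|_H=1$, choose $l_n\in D$ with $j_H(l_n)\to j_H(l)$ in $H(\gamma)$; then $E_{P_z}[\,|{}_{E'}\langle l-l_n,W_t\rangle_E|^2\,]=t\,|j_H(l-l_n)|_H^2\to 0$, so ${}_{E'}\langle l_n,W_t\rangle_E\to{}_{E'}\langle l,W_t\rangle_E$ in $L^2(P_z)$, and an $L^2$-limit of one-dimensional Wiener processes is Wiener; the identity $E_{P_z}[\,{}_{E'}\langle l,W_t\rangle_E\,{}_{E'}\langle m,W_s\rangle_E\,]=(s\wedge t)\,[j_H(l),j_H(m)]_H$ follows likewise, and independence of the increments from $(\mathscr F_t)$ results from each $W_t^l$ being a continuous $(\mathscr F_t)$-martingale with deterministic bracket. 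Collecting the countably many relatively polar exceptional sets coming from the rational times, the basis vectors $l_k$ and Theorem \ref{thm:skorohod}, and using countable subadditivity of $\Cap$ (Proposition \ref{prop:capprop}(5)), one obtains a single relatively polar $S\subset E$ outside which (ii) and (iii) hold. The step I expect to be the main obstacle is the verification of hypothesis (ii) of Lemma \ref{lem:D}, namely that the cylindrical Gaussian $\widetilde\gamma_t$ genuinely extends to a Borel measure $\gamma_t^\ast$ on $E$ — here the unconditional extension result \cite[Theorem 4.1]{Ku} is what lets us dispense with the standing assumption imposed in \cite{AR} — together with the bookkeeping needed to move from the rational-time skeleton to a genuinely $E$-valued continuous process while keeping all the identities ${}_{E'}\langle l_k,W_t\rangle_E=W_t^k$ valid simultaneously for every $k$ and every $t$, off one and the same relatively polar set.
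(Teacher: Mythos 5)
Your proposal follows essentially the same route as the paper's proof: hypothesis (i) of Lemma \ref{lem:D} via the linearity of $\varphi\mapsto M^{[\varphi]}$, hypothesis (ii) via the unconditional extension $\gamma_t^{*}$ of the cylindrical Gaussian measure from \cite[Theorem 4.1]{Ku}, then the second-moment estimate and Lemma \ref{lem:ext} to produce a continuous $E$-valued modification, and finally the Gaussian increment/countably-generated $\sigma$-algebra argument for the $(\mathscr F_t)$-Brownian motion property. A minor point in your favour: your choice $f(r)=\sqrt{Cr}$ is the one that actually satisfies the modulus hypothesis of Lemma \ref{lem:ext}, since $E_{P_z}[\|\widetilde W_t-\widetilde W_s\|_E^2]^{1/2}$ is of order $|t-s|^{1/2}$, whereas the paper writes $f(r)=a\,r$.
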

\begin{proof}
 Let $D\subset K$ be as \eqref{eq:Ddef}. Since the maps $l\mapsto \varphi_l$ and $u\mapsto M^{[u]}$ are linear then
 $l\mapsto W_t^l:=(W_t^l)_{t\geq 0}$ is $\mathbb Q-$linear on $D,\, P_z-$a.s. for each $z\in E\setminus S$ and some
 relatively polar set $S$. Consequently $(i)$ in Lemma \ref{lem:D} is satisfied. Moreover, by Theorem
 \ref{thm:skorohod}, $E_z[\exp(iW_t^l)]=\exp(-\frac{1}{2}t|l|^2_{H(\gamma)})$ for all (unite vector)
 $l\in D,\, t\geq 0$. Since $\tilde\gamma_t$ extends to a probability measure $\gamma_t^*$, then $(ii)$ in
 Lemma \ref{lem:D} is also satisfied.
 Now fixing $t\geq 0$ and applying Lemma \ref{lem:D} with $\mathscr A=\mathscr F_t$ and $\alpha_l:=W_t^l$, we obtain
 that there exists an $\mathscr F_t/\mathscr B(E)-$measurable map $\widetilde W_t:\Omega\rightarrow E$ such that
 \begin{equation}\label{eq:Wk}
  ~_{E'}\langle l,\widetilde W_t\rangle_E=W_t^l\quad,\text{ for all (unite vector) }l\in D,\,
  P_z-\text{a.s. for each } z\in E\setminus S.
 \end{equation}
 Remark that the law of $\widetilde W_1$ is precilsy $\gamma_1^*$ and then, by scaling, one obtain that the law
 of $\widetilde W_t-\widetilde W_s$ is $(t-s)\gamma_1^*$(see also \cite[Remark 6.3]{AR}), hence for
 $z\in E\setminus S,\, t,\, s\geq 0$,
 \[
  E_z\left[\|\widetilde W_t-\widetilde W_s\|_E^2  \right]=(t-s)\int\|z\|_E^2\gamma_1^*(dz)
 \]
which is finite by Fernique/Skorohod theorem (cf. \cite[Theorem 3.41]{Str}). Now we apply Lemma \ref{lem:ext} to
$Y_t=\widetilde W_t$ and $f(r)=a.r$ where $a=\int\|z\|_E^2\gamma_1^ *(dz)$, since the independence of the
random variable $B_n$ on $P_z$ can be choosen uniformly for all $P_z,\, z\in E\setminus S$. It then follows that
there exists a version $(W_t)_{t\geq 0}$ of $(\widetilde W_t)_{t\geq 0}$ which is of continous sample paths such
that for each $t\geq 0,\, \omega\mapsto W_t(\omega):=W(\omega)(t),\, \omega\in\Omega,$ is $\mathscr F_t-$measurable
and $W_t=\widetilde W_t,\, P_z$-a.s. for all $z\in E\setminus S$. Since $\mathscr F_t$ is complete, (i) is proven.

By the continuity of the sample paths and \eqref{eq:Wk} it follows that
\[
  ~_{E'}\langle l, W_t\rangle_E=W_t^l\quad,\text{ for all }t\geq 0,\, l\in D,\, P_z-\text{a.s. for each }
  z\in E\setminus S.
\]
which holds also for $l\in K$ by \cite[Corollary 1 (ii), p.139]{F80}. This implies (iii).

It remains to show that $W=(W_t)_{t\geq 0}$ is an $(\mathscr{F}_t)_{t\geq 0}-$Brownian motion on $E$. By Theorem
\ref{thm:skorohod} we may assume that for each unite vector $l\in D$, $(W_t^l,\mathscr F_t, P_z)_{t\geq 0}$ is an
$(\mathscr F_t)_{t\geq 0}-$Brownian motion on $\mathbb R$ for all $z\in E\setminus S$. Hence by \eqref{eq:Wk} under each $P_z$,
$z\in E\setminus S$ , the random variable $~_{E'}\langle l,W_t-W_s\rangle_E$ is mean zero Gaussian with covariance
$(t-s)|l|_H^2=(t-s)$ for all $0 \leq s < t$ and a unite vector $l\in D$. Consequently the same is true for all $l\in E'$. Since
for $0 \leq s < t$ the $\sigma-$algebra $\{(W_t-W_s)^{-1}(B)\,|\, B\in\mathscr B(E)\}$ on $\Omega$ is equal to
the $\sigma-$algebra generated by $\{~_{E'}\langle l,W_t-W_s\rangle_E\,|\, l\in D\}$ on $\Omega$, it follows again
by Theorem \ref{thm:skorohod} and \eqref{eq:Wk} that $W_t-W_s$ is independent of $\mathscr F_s$. Since
$W=(W_t)_{t\geq 0}$ has continuous sample paths and because of part (i), it follows that $W$ is an
$(\mathscr F_t)_{t\geq 0}-$Brownian motion on $E$.

\end{proof}

The following Theorem is a direct consequence of Theorem \ref{thm:skorohod} and Theorem \ref{thm:W}.
\begin{theorem}\label{thm:N}
 There exists a map $N:\Omega\rightarrow C([0,\infty[,\obar)$
 having the following properties
 \begin{enumerate}
  \item[(i)] $\omega\mapsto N_t(\omega):=N(\omega)(t),\,\omega\in\Omega$, is $\mathscr F_t/\mathscr B(\obar)-$ measurable
  for each $t\geq 0$.
  \item[(ii)] For each unite vector $l\in K$, we have
  \[
  ~_{E'}\langle l,N_t\rangle_E=\int_0^t\hat l(X_s)\,ds+\int_0^t\nu_G^l(X_s)d\,L^{\rho}_s,\,  (t\geq 0)
  \]
  $P_z-$a.s. for all $z\in\obar$ outside
  a relatively polar set (depending on $l$).
  \item[(iii)] $X_t=z+W_t+N_t,\, t\geq 0,\, P_z-$a.s. for all $z\in\obar\setminus S$, where $W$ and $S$ are as in
  Theorem \ref{thm:W}.
 \end{enumerate}
\end{theorem}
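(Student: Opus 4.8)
The plan is to obtain $N$ simply by rearranging the identity asserted in (iii): we set
\[
 N_t:=X_t-X_0-W_t,\qquad t\geq 0,
\]
where $(X_t)_{t\geq 0}$ is the coordinate process of $\mathbf M$ and $W=(W_t)_{t\geq 0}$ is the $E$-valued process produced by Theorem~\ref{thm:W}. Since $\mathbf M$ is normal, $X_0=z$ $P_z$-a.s., so with this definition $X_t=z+W_t+N_t$ $P_z$-a.s.\ for every $z$, which gives (iii) (in particular for $z\in\obar\setminus S$). It then remains only to verify the regularity properties (i) and to identify the coordinates of $N$ as in (ii).

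Property (i) and the continuity of the paths are inherited: $X$ has continuous sample paths because $\EO$ is local, so $\mathbf M$ is a diffusion; $W$ has continuous sample paths by Theorem~\ref{thm:W}; and $X_0$ is constant in $t$. Hence $t\mapsto N_t(\omega)=X_t(\omega)-X_0(\omega)-W_t(\omega)$ is continuous for every $\omega$, so $N$ maps $\Omega$ into $C([0,\infty[,E)$. Likewise $X_t$ and $X_0$ are $\mathscr F_t$-measurable since $\mathbf M$ is adapted to $(\mathscr F_t)_{t\geq 0}$, and $\omega\mapsto W_t(\omega)$ is $\mathscr F_t/\mathscr B(E)$-measurable by Theorem~\ref{thm:W}(i); therefore $\omega\mapsto N_t(\omega)$ is $\mathscr F_t/\mathscr B(E)$-measurable for each $t\geq 0$, which is (i).

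For (ii), fix a unit vector $l\in K$. Recalling $\varphi_l(z)={}_{E'}\langle l,z\rangle_E$ and applying ${}_{E'}\langle l,\cdot\rangle_E$ to the definition of $N_t$ gives, $P_z$-a.s.,
\[
 {}_{E'}\langle l,N_t\rangle_E=\varphi_l(X_t)-\varphi_l(X_0)-{}_{E'}\langle l,W_t\rangle_E .
\]
By Theorem~\ref{thm:W}(iii) (valid for every $l\in K$, not only the basis elements, by linearity of $l\mapsto\varphi_l$ and $u\mapsto M^{[u]}$), one has ${}_{E'}\langle l,W_t\rangle_E=W_t^l$ for all $t\geq 0$, $P_z$-a.s.\ off a relatively polar set depending on $l$; and by Theorem~\ref{thm:skorohod} (the Fukushima decomposition of $\varphi_l$), $\varphi_l(X_t)-\varphi_l(X_0)=W_t^l+\int_0^t\hat l(X_s)\,ds+\int_0^t\nu_G^l(X_s)\,dL_s^{\rho}$ for $z\in\obar\setminus S_l$ with $S_l$ relatively polar. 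Combining these three identities cancels the martingale term $W_t^l$ and yields exactly (ii), valid $P_z$-a.s.\ for all $z$ outside the union of $S_l$ with the exceptional sets coming from Theorem~\ref{thm:W}; this is a countable union of relatively polar sets, hence relatively polar by Proposition~\ref{prop:capprop}(5).

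I do not expect a genuine obstacle in this step: the analytic substance is already contained in Theorems~\ref{thm:skorohod} and~\ref{thm:W}, and the present statement is merely their algebraic recombination. The one point worth stressing is that, unlike in the proof of Theorem~\ref{thm:W}, there is no need here to reinvoke Lemma~\ref{lem:D} or any ``continuous version'' argument, precisely because $X$ and $W$ are already bona fide continuous $(\mathscr F_t)$-adapted $E$-valued processes, so their difference is one as well; the only care required is the book-keeping of exceptional sets, which is harmless thanks to the stability of relatively polar sets under countable unions.
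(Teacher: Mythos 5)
Your proposal is correct and follows essentially the same route as the paper: the paper's proof also simply sets $N:=X-X_0-W$ and observes that (i) follows from Theorem \ref{thm:W} while (ii) and (iii) follow from Theorem \ref{thm:skorohod}. You have merely spelled out the details (continuity of paths, adaptedness, cancellation of the martingale term $W_t^l$, and the countable union of relatively polar exceptional sets) that the paper leaves implicit.
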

\begin{proof}
 Define $N:=X-W$ where $X:\Omega\rightarrow C([0,\infty[,\obar)$ is given by
 $X(\omega)(t):=X_t(\omega)-X_0(\omega),\, \omega\in\Omega,\, t\geq 0$. Then (i) holds by Theorem \ref{thm:W} from
 which (ii) and (iii) also follow in virtue of Theorem \ref{thm:skorohod}.
\end{proof}
\begin{theorem}
 There exists a map $W:\Omega\rightarrow C([0,\infty[,E)$ such that
 for r.q.e. $z\in\obar$ under $P_z$, $W=(W_t)_{t\geq 0}$ is an $(\mathscr F_t)_{t\geq 0}-$Brownian motion on $E$
 starting at zero with covariance $[,]_H$ such that for r.q.e. $z\in\obar$
 \begin{equation}\label{eq:skorohod2}
  X_t=z+W_t+\int_0^tX_s\,ds+\int_0^t\nu_G(X_s)\,dL^{\rho}_s
 \end{equation}
where $L^{\rho}_t:=(L^{\rho}_t)_{t\geq 0}$ is a positive continous additive functional which is associated with
$\rho$ by the Revuz correspondence and verify the equality \eqref{eq:L}. In addition
$\nu_G$ is a unite vector defined by
\[
 \nu_G:=\frac{D_H G}{|D_H G|_H}
\]

\end{theorem}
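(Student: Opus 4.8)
The plan is to assemble the final $E$-valued Skorohod decomposition \eqref{eq:skorohod2} by gluing together the componentwise identities from Theorem \ref{thm:skorohod} via the $E$-valued Brownian motion $W$ already constructed in Theorem \ref{thm:W} and the drift/reflection term $N$ from Theorem \ref{thm:N}. The only genuinely new ingredient is the identification of the $E$-valued drift: we must produce a single map, to be denoted $\int_0^\cdot X_s\,ds$, whose $l_k$-component equals $\int_0^t \hat l_k(X_s)\,ds$, and likewise a single map $\int_0^\cdot \nu_G(X_s)\,dL^\rho_s$ whose $l_k$-component equals $\int_0^t \nu^{l_k}_G(X_s)\,dL^\rho_s$. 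For the first term this is essentially free: since $\varphi_{l_k}(X_s) = {}_{E'}\langle l_k, X_s\rangle_E$ and $\hat l_k$ is the logarithmic-derivative element generated by $l_k$, under our identification $\hat l_k(z) = [l_k, z]_H$-type expressions are exactly the components of $z$ itself, so $\int_0^t \hat l_k(X_s)\,ds = {}_{E'}\langle l_k, \int_0^t X_s\,ds\rangle_E$, where $\int_0^t X_s\,ds$ is the Bochner integral of the continuous $E$-valued path $s\mapsto X_s$ on $[0,t]$ (this is well-defined pathwise since $X$ has continuous sample paths in $\obar\subset E$).

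Next I would handle the reflection term. Theorem \ref{thm:N}(ii) already gives a map $N:\Omega\to C([0,\infty[,\obar)$ whose components are $\int_0^t \hat l(X_s)\,ds + \int_0^t \nu_G^l(X_s)\,dL^\rho_s$; subtracting off the Bochner integral $\int_0^\cdot X_s\,ds$ just constructed leaves a map whose $l_k$-component is $\int_0^t \nu^{l_k}_G(X_s)\,dL^\rho_s$ for each $k$, r.q.e. Since $D_H G(z)\in H(\gamma)$ for $\gamma$-a.e.\ (hence, by Assumption \ref{Hypo} and the trace theory, $\rho$-a.e.\ on $\partial\mo$) $z$, and $\nu_G(z) = D_H G(z)/|D_H G(z)|_H$ is a unit vector of $H(\gamma)\hookrightarrow E$, the components $\nu^{l_k}_G(z) = [l_k,\nu_G(z)]_H$ are precisely the coordinates of $\nu_G(z)$ in the orthonormal basis $\{l_k\}$; thus $\int_0^t \nu_G^{l_k}(X_s)\,dL^\rho_s = {}_{E'}\langle l_k, \int_0^t \nu_G(X_s)\,dL^\rho_s\rangle_E$ where the right-hand side is understood as the $E$-valued integral against the scalar increasing process $L^\rho$ of the $\obar$-valued (indeed $H$-valued, on the support of $dL^\rho$) integrand $s\mapsto \nu_G(X_s)$. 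Here one uses \eqref{eq:L}: $dL^\rho$ is carried by $\{X_s\in\partial\mo\}$, where $\nu_G(X_s)$ is defined and has $H$-norm one, so the integrand is $\|\cdot\|_E$-bounded on $dL^\rho$-a.e.\ $s$ and the integral makes sense pathwise.

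Finally I would combine the pieces: for each unit $l\in K$, Theorem \ref{thm:skorohod} gives
\[
{}_{E'}\langle l, X_t - X_0\rangle_E = W^l_t + \int_0^t \hat l(X_s)\,ds + \int_0^t \nu_G^l(X_s)\,dL^\rho_s, \quad P_z\text{-a.s., r.q.e. }z,
\]
and Theorem \ref{thm:W}(iii) identifies $W^l_t = {}_{E'}\langle l, W_t\rangle_E$; substituting the two $E$-valued integrals from the previous paragraph, the right-hand side equals ${}_{E'}\langle l, W_t + \int_0^t X_s\,ds + \int_0^t \nu_G(X_s)\,dL^\rho_s\rangle_E$ for all $l$ in the countable separating family $K_0$, simultaneously off a single relatively polar set $S$ (take the countable union of the exceptional sets $S_{l_k}$, which is again relatively polar by Proposition \ref{prop:capprop}(5)). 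Since $K_0$ separates the points of $E$ and both sides are continuous in $t$, identity \eqref{eq:skorohod2} follows for $P_z$-a.e.\ $\omega$, for r.q.e.\ $z\in\obar$. The properties of $W$ — continuous $E$-valued sample paths, $(\mathscr F_t)$-Brownian motion, covariance $[,]_H$, started at zero — are exactly Theorem \ref{thm:W}(ii).

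The main obstacle is the pointwise (in $\omega$) construction and measurability of the $E$-valued reflection integral $\int_0^t \nu_G(X_s)\,dL^\rho_s$: one must check that $s\mapsto \nu_G(X_s)$, which is only defined on $\partial\mo$ and only up to $\rho$-null (hence $dL^\rho$-null) sets, can be chosen so that the resulting $E$-valued additive functional is jointly measurable and agrees r.q.e.\ with $N - \int_0^\cdot X_s\,ds$. This is precisely where one leans on the structure already present in Theorem \ref{thm:N} together with \eqref{eq:L} and the fact that $\rho$ charges no relatively polar set (so a relatively quasi-continuous version of $\nu_G$ along $\partial\mo$ is well-defined $dL^\rho$-a.e.), rather than on any new analytic input.
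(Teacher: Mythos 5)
Your proposal is correct and follows essentially the same route as the paper: both identify $\hat l_k(X_s)$ with the $l_k$-component of $X_s$ itself (so the drift is the Bochner integral $\int_0^t X_s\,ds$), recover $\nu_G=\sum_k\nu_G^{k}l_k=D_HG/|D_HG|_H$ from its coordinates in the basis $\{l_k\}$, and then assemble \eqref{eq:skorohod2} from Theorems \ref{thm:skorohod}, \ref{thm:W} and \ref{thm:N} using that the countable family $K_0$ separates points and that a countable union of relatively polar sets is relatively polar. The only cosmetic difference is that the paper re-defines $W_t:=X_t-X_0-N_t$ and re-runs the end of the proof of Theorem \ref{thm:W}, while you use the $W$ of Theorem \ref{thm:W} directly; your explicit remarks on the pathwise construction and measurability of the $E$-valued reflection integral make precise a point the paper leaves implicit.
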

\begin{proof}
 Let $N_t$ be as defined in Theorem \ref{thm:N} and let $\{l_k\,|\,k\in\mathbb N\}$ be the
 orthonormal basis of $H(\gamma)$, as fixed in the last section. Then by Theorem \ref{thm:N}, we have that for all
 $z\in E\setminus S$
 \[
  ~_{E'}\langle l_k,N_t\rangle_E=\int_0^t\hat l_k(X_s)ds+\int_0^t\nu_G^k(X_s)\,dL_s^{\rho}, t\geq 0,\, P_z-\text{a.s.}
 \]
where $\hat l_k$ is the element generated by $l_k$ and $\nu_G^k:=\nu_G^{l_k}$. As $D_H^kG=[l_k,D_H G]_H$ then there
exists $\nu_G$ such that $[l_k,\nu_G]_H=\nu_G^k$, which is given explicitly by
\begin{equation}
 \begin{split}
 \nu_G &=\sum_{k=1}^{\infty}\nu_G^kl_k\\[0.2cm]
     &=\sum_{k=1}^{\infty}\frac{[l_k,D_H G]_H}{|D_H G|_H}l_k\\[0.2cm]
     &=\frac{D_H G}{|D_H G|_H}
  \end{split}
 \end{equation}
Now by \cite[Proposition 5.1.6]{Bo} and \cite[Example 7.3.3 (i)]{Bo} there exists a map $\hat l:E\rightarrow E$ such
that $~_{E'}\langle l_k,\hat{l}\rangle_E=\hat l_k$, which is exactly the identity, i.e. $\hat l(x)=x$ (see also
\cite[Remark 6.8 (ii)]{AR}). Consequently the map $N:\Omega\rightarrow C([0,\infty[,\obar)$ defined in Theorem
\ref{thm:N} is given, for each $z\in E\setminus S$, by
\[
 N_t=\int_0^tX_s\,ds+\int_0^t\nu_G(X_s)\,dL^{\rho}_s,\quad t\geq 0,\, P_z-\text{a.s.}
\]
Define $W_t:=X_t-X_0-N_t,\,t\geq 0$. It follows by Theorem \ref{thm:skorohod} that
\[
 ~_{E'}\langle l, W_t\rangle_E=W_t^{l},\quad t\geq 0,\, l\in D
\]
$P_z-$a.s. for all $z\in E\setminus S$, where $D$ is as in Lemma \ref{lem:D}. It now follows as in the last part of
the proof of Theorem \ref{thm:W} that $W=(W_t)_{t\geq 0}$ is an $(\mathscr F_t)_{t\geq 0}-$Brownian motion on $E$
with covariance $[,]_H$.
\end{proof}

 \section{Examples}
 We give some examples to illustrate the skorohod representation in infinite dimensions. It includes regions below graphs and Balls.

 \subsection{Regions below graphs}
 We fix $\hat h\in E'$ such that $\|\hat h\|_{L^2(E\gamma)}=1$ and we set $h:=Q(\hat h)$. Then $|h|_H=1$ and
 $\hat h(h)=1$. we split $E=\mathrm{span}\, h\oplus Y$, where $Y=(I-\Pi_h),\, \Pi_h(x)=\hat h(x)h$. The Gaussian
 measure $\gamma\circ(I-\Pi_h)^{-1}$ on $Y$ is denoted by $\gamma_Y$.

 Let $F\in\bigcap_{p>1} W^{2,p}(Y,\gamma_Y)$. Choose any Borel precise version of $F$ ( for example we can choose $F$
 to be a Lipschitz function) and set
 \[
  G:E\mapsto\mathbb R,\quad G(x)=\hat h(x)-F\left((I-\Pi_h)(x)\right).
 \]

 Then, $G\in\bigcap_{p>1} W^{2,p}(E,\gamma)$ and $\Dho G(x)=h-D^{\mo}_{H_Y}F\left((I-\Pi_h)(x)\right)$, so that
 \[
 |\Dho G(x)|_H^2=1+|D_H^{\mo} F(I-\Pi_h)(x)|_{H_Y}^2\geq 1
 \]

 Hence $G$ satisfies assumption \ref{Hypo}. The sublevel $\mo=G^{-1}(-\infty,0)$ is just the region below the
 graph of $F$. The Skorohod decomposition of the infinite dimensional reflecting Ornstein-Uhlenbeck process is
 \begin{equation*}
  X_t=z+W_t+\int_0^tX_s\,ds+\int_0^tn_G(X_s)\,dL_s^{\rho}
 \end{equation*}
 where in this situation $\nu_G$ is defined as follow
 \[
  \nu_G(x)=\frac{ h-D^{\mo}_{H_Y}F\left((I-\Pi_h)(x)\right) }{ (1+|D_H^{\mo} F(I-\Pi_h)(x)|_{H_Y}^2)^{\frac{1}{2}} }
 \]

 \subsection{Balls} In the context of balls we take $E$ to be a separable Hilbert space endowed with a
 nondegenerate centered Gaussian measure $\gamma$, with covariance $Q$. we fix an orthonormal basis
 $\{e_k:k\in\mathbb N\}$ of $E$ consisting of eigenvectors of $Q,\, Qe_k=\lambda e_k$, and the corresponding
 orthonormal basis of $H=Q^{1/2}(E)$ is $\mathcal V=\{v_k:=\sqrt{\lambda_k}e_k:k\in\mathbb N\}$. For each $k$
 the function $\hat v_k$ is just $\hat v_k(x)=\frac{x_k}{\sqrt\lambda_k}$, where $x_k=\langle x,e_k\rangle_X$.

 For every $r>0$ the function $G(x):=\|x\|^2-r^2$ satisfies Hypothesis \ref{Hypo}. Indeed, it is smooth,
 $\mo=B(0,r),\, \Dho G(x)=2Qx$ and $1/|\Dho G|_H=1/2\|Q^{\frac{1}{2}}x\|$ is easily seen to belong to
 $L^p(E,\gamma)$ for every $p$.

 Then for $\varphi\in W^{1,2}(B(0,r),\gamma)$ the integration by parts formula reads
 \[
  \int_{B(0,r)} D_k^{\mo}\varphi\, d\gamma=\frac{1}{\sqrt{\lambda_k}}\int_{B(0,r)}x_k\varphi\, d\gamma+
  \int_{\|x\|=r}\frac{\sqrt{\lambda_k}x_k}{\|Q^{1/2}\|x}\varphi \, d\rho,
 \]

 Consequently, the componentewise Skorohod decomposition reads
 \[
  X_t^k=z+W_t^k+\frac{1}{\sqrt{\lambda_k}}\int_0^tX_s^k\,ds+\int_0^t \frac{\sqrt{\lambda_k}X_s^k}{\|Q^{1/2}X_s\|} \,
  dL_s^{\rho}
 \]
 and the Skorohod decomposition of the infinite dimensional reflecting Ornstein-Uhlenbeck process $(X_t)_{t\geq 0}$ is given by
 \[
  X_t=z+W_t+\int_0^tX_s\,ds+\int_0^t \frac{QX_s}{\|Q^{1/2}X_s\|} \,
  dL_s^{\rho}
 \]

\par\bigskip

\subsection*{Acknowledgments}
It is my great pleasure to acknowledge fruitful and stimulating
discussions with Michael R\"{o}ckner on the topics discussed in this
paper. I warmly thank M. Kunze and M. sauter and the all group of W.
Arendt in Ulm (germany), where the most ideas of this paper was
discussed.

\end{document}